 \newcommand{\Z}{{\mathbb Z}}                   
 \newcommand{\R}{{\mathbb R}}                   
 \renewcommand{\C}{{\mathbb C}}                   
 \newcommand{\CP}[1]{\mathbb{C}P^{#1}}          
 \newcommand{\Xt}{\widetilde{X}}			
 \newcommand{\Xtt}{Y}									
 \newcommand{\St}{\widetilde{\Sigma}}		
 \newcommand{\E}{{\mathcal E}}                  
 \newcommand{\Et}[1]{\widetilde{E}_{#1}}		
 \newcommand{\F}{{\mathcal F}}			
 \newcommand{\Qt}{{\mathcal Q}}			
 \newcommand{\Ft}{{\mathcal S}}			
 \renewcommand{\L}{{\mathcal L}}			
 \renewcommand{\P}{{\mathtt P}}                  
 \newcommand{\cJ}{\overline{\Pic}}			
 \renewcommand{\t}{\theta}                      
 \newcommand{\PP}{{\mathbf P}}                    
 \renewcommand{\O}{{\mathcal O}}                 
 \newcommand{\Mod}{{\mathcal M}}               
 \renewcommand{\d}{\mbox{d}}                      
 \newcommand{\gl}{\mathfrak{gl}}                
 \newcommand{\End}{{\mathcal E}nd}
 \newcommand{\Tor}{{\mathcal T}or}
 \DeclareMathOperator{\coker}{coker}
 \DeclareMathOperator{\im}{im}
 \DeclareMathOperator{\Pic}{Pic}
 \DeclareMathOperator{\ad}{ad}
 \DeclareMathOperator{\red}{red}
 \DeclareMathOperator{\pardeg}{par-deg}
\DeclareTextCommand{\textoneinferior}{PU}{\9040\201}				
\DeclareTextCommand{\texttwoinferior}{PU}{\9040\202}				
\DeclareTextCommand{\textseveninferior}{PU}{\9040\207}			
\DeclareTextCommand{\texteightinferior}{PU}{\9040\210}			
 \newtheorem{prop}{Proposition}[section]
 \newtheorem{rk}[prop]{Remark}
 \newtheorem{lem}[prop]{Lemma}
 \newtheorem{defn}[prop]{Definition}
 \newtheorem{thm}[prop]{Theorem}
 \title[Two-dimensional moduli spaces of irregular Higgs bundles]{Two-dimensional moduli spaces of rank $2$ Higgs bundles over $\CP{1}$ with one irregular singular point}
\author{P\'eter Ivanics}  
\address{Budapest University of Technology and Economics, 1111. Budapest,
Egry J\'ozsef utca 1. H \'ep\"ulet, Hungary}
\address{R\'enyi Institute of Mathematics, 1053. Budapest, Re\'altanoda
utca 13-15. Hungary}
\email{ipe@math.bme.hu}
\author{Andr\'as Stipsicz} 
\address{R\'enyi Institute of Mathematics, 1053. Budapest, Re\'altanoda
utca 13-15. Hungary}
\email{stipsicz@renyi.hu}
\author{Szil\'ard Szab\'o}
\thanks{Corresponding author: Szil\'ard Szab\'o}
\address{Budapest University of Technology and Economics, 1111. Budapest,
Egry J\'ozsef utca 1. H \'ep\"ulet, Hungary}
\address{R\'enyi Institute of Mathematics, 1053. Budapest, Re\'altanoda
utca 13-15. Hungary}
\email{szabosz@math.bme.hu}
\begin{document}
 \begin{abstract}
 We give a complete description of the two-dimensional moduli spaces of
 stable Higgs bundles of rank $2$ over $\CP{1}$ with one irregular
 singular point, having a regular leading-order term, and endowed with
 a generic compatible parabolic structure such that the parabolic
 degree of the Higgs bundle is $0$.  Our method relies on elliptic
 fibrations of the rational elliptic surface, an equivalence of
 categories between irregular Higgs bundles and some sheaves on a ruled
 surface, and an analysis of stability conditions.
 \end{abstract}

 \maketitle

 \section{Introduction}
 In this article we consider $2$ complex dimensional moduli spaces of
 singular Higgs bundles over $\CP1$ with irregular singularities.  It
 is known \cite{Biq-Boa} that if one fixes finitely many points on a
 curve $C$ and suitable polar parts for a Higgs bundle near those
 points, then one gets a holomorphic symplectic moduli space of Higgs
 bundles over $C$ with the given {irregular part and residues} at
 the singularities.  In some cases these spaces turn out to be of
 complex dimension $2$.  Our aim in this article is to give a complete
 description of the two-dimensional holomorphic symplectic moduli
 spaces of rank $2$ Higgs bundles over $\CP1$ having a unique pole of
 order $4$ as singularity, and regular leading-order term.  One needs
 to distinguish two cases, depending on whether the leading-order term
 is a regular semi-simple endomorphism (untwisted case), or has
 non-vanishing nilpotent part (twisted case).  As we will see, the
 corresponding fiber at infinity of the Hitchin fibration is $\Et7$ in
 the untwisted case and $\Et8$ in the twisted case.  The corresponding
 de Rham moduli spaces of irregular connections are related to the
 Painlev\'e II (untwisted case) and Painlev\'e I (twisted case)
 equations. The polar part of an irregular Higgs bundle depends on some
 complex parameters 
 \begin{equation}\label{eq:untwisted-parameters}
    a_{\pm}, b_{\pm}, c_{\pm}, \lambda_{\pm} \in \C ,\quad a_+ \neq a_- \tag{{U}}
 \end{equation}
 in the untwisted case (referred to as (U)) and 
 \begin{equation}\label{eq:twisted-parameters}
   b_{-8},\ldots ,b_{-3} \in \C ,\quad  b_{-7} \neq 0 \tag{{T}}
 \end{equation}
 in the twisted case {(referred to as (T))}, see Subsection \ref{subsec:spectral}.

 In the following statements we let $\Mod$ be a moduli space of rank
 $2$, parabolic degree $0$ stable parabolic irregular Higgs bundles
 over $\CP1$ with a unique pole of order $4$ with a regular
 leading-order term and fixed parameters
 (\ref{eq:untwisted-parameters}) or (\ref{eq:twisted-parameters}).
 For details {and definitions} see Subsection
 \ref{subsec:spectral}.  {If the parabolic structure is generic},
 the degree of the underlying vector bundle is necessarily equal to
 $-1$.  It is expected that moduli spaces $ \Mod^{ss}$ of semi-stable
 irregular Higgs bundles with fixed polar parts underlie completely
 integrable systems with Abelian varieties as generic fibers.  If
 $\dim_{\C}(\Mod ^{ss}) = 2$ this would then imply that $\Mod ^{ss}$ is an
 elliptic fibration over a curve.  For generic weights $\Mod
 ^{ss}=\Mod ^s$, where $\Mod ^s$ is the moduli space of stable
 irregular Higgs bundles. Our results below will confirm this
 expectation, with one singular fiber of type $\Et7$ (untwisted case)
 or $\Et8$ (twisted case).  On the other hand, there are several
 possibilities for the other singular fibers \cite{Miranda, Persson, SSS}.

 In \cite{Sz-bnr}, a general equivalence of categories between
 irregular Higgs bundles and some pure $1$-dimensional rank one
 sheaves on a ruled surface was shown to hold, assuming that the
 leading order term of the Higgs field is semi-simple.  We will use
 this equivalence to prove our first result, giving a complete
 description of these further singular fibers in the untwisted case in
 terms of the parameters of \eqref{eq:untwisted-parameters}.  (For the
 definition of various types of singular fibers see \cite{Kodaira} or
 Section~\ref{sec:elliptic-fibrations}.)

 \begin{thm}\label{thm:mainE7}
   Assume that the polar part of the Higgs bundle is untwisted.  Then
   {the moduli space $\Mod ^s$} is biregular to the complement of the
   fiber at infinity (of type $\Et7$) in an elliptic fibration of the
   rational elliptic surface such that the set of other singular
   fibers of the {Hitchin} fibration is:
  \begin{enumerate}
   \item a type $III$ fiber if $\Delta = 0$ and $\lambda_+ = 0$; \label{thm:mainE7+III}
   \item a type $II$ and an $I_1$ fiber if $\Delta = 0$ and $\lambda_+ \neq 0$; \label{thm:mainE7+II+I1}
   \item an $I_2$ and an $I_1$ fiber if $\Delta \neq 0$ and $\lambda_+ = 0$; \label{thm:mainE7+I2+I1}
   \item and three $I_1$ fibers otherwise, \label{thm:mainE7+I1+I1+I1}
  \end{enumerate}
	where 
	$\Delta = \left(\left(b_--b_+\right){}^2-4 \left(a_--a_+\right) \left(c_--c_+\right)\right){}^3-432 \left(a_--a_+\right){}^4 \lambda _+^2.$
 \end{thm}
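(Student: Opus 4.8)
The plan is to transport the problem to the spectral side via \cite{Sz-bnr} and then read the Kodaira types off an explicit Weierstrass model. First I would invoke the equivalence of categories between untwisted irregular Higgs bundles (those whose leading-order term is regular semisimple) and pure one-dimensional rank one sheaves on the associated ruled surface, so as to reinterpret $\Mod$ as a moduli space of such sheaves. Passing to the support of a sheaf, equivalently to the characteristic polynomial $\eta^2 - \tr(\theta)\,\eta + \det(\theta)$ of the Higgs field $\theta$, exhibits $\Mod$ as an open subset of the total space of the Hitchin map. Since $\dim_\C \Mod = 2$ and a generic spectral curve has arithmetic genus one, the Hitchin base is a curve, a generic fibre is a torsor over the Jacobian of the spectral curve and hence an elliptic curve, and $\Mod$ inherits the structure of an elliptic fibration. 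After fixing the polar parts through the parameters \eqref{eq:untwisted-parameters}, a single coordinate $t$ remains free on the Hitchin base, and I would write the family of spectral curves explicitly in the chosen coordinates on the ruled surface.

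Second, I would put this family into Weierstrass form $y^2 = 4x^3 - g_2(t)\,x - g_3(t)$ and compactify over $\CP{1}$ to obtain the rational elliptic surface. A direct computation should yield $\deg_t g_2 = 1$ and $\deg_t g_3 \le 1$, with coefficients explicit in $a_\pm,b_\pm,c_\pm,\lambda_\pm$. Inspecting the minimal model in the chart at infinity, the vanishing orders $(\ord g_2,\ord g_3,\ord\Delta(t)) = (3,\ge 5,9)$ of the Weierstrass data identify the fibre over $t=\infty$ as Kodaira type $III^{*}=\Et7$; the Euler number $12$ of the rational elliptic surface then splits as $9$ (from $III^{*}$) plus the total Euler number of the finite singular fibres, leaving exactly $3$ for the latter. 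In particular the finite discriminant $\Delta(t)=g_2(t)^3-27\,g_3(t)^2$ is a cubic in $t$, and its zero scheme records the finite singular fibres. Showing that the stable locus is precisely the complement of the fibre at infinity, rather than a proper birational model, relies on the stability analysis and upgrades the identification to a biregular one.

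Third comes the case analysis, by Kodaira's classification \cite{Kodaira} (equivalently Tate's algorithm) applied to the zeros of $\Delta(t)$. A useful bookkeeping fact, verified by direct substitution, is that the discriminant appearing in the theorem is itself a Weierstrass discriminant of the fixed polar data: $\Delta = G_2^3 - 27\,G_3^2$ with $G_2 = (b_--b_+)^2-4(a_--a_+)(c_--c_+)$ and $G_3 = 4(a_--a_+)^2\lambda_+$. The loci $\{\Delta = 0\}$ and $\{\lambda_+ = 0\}$ then cut the parameter space into four strata, and on each stratum I would compute the orders of vanishing of $g_2(t),g_3(t),\Delta(t)$ at every zero of $\Delta(t)$ and match them to Kodaira's table. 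On the open stratum $\Delta\neq0,\ \lambda_+\neq0$ the three zeros are simple and of multiplicative type, giving three $I_1$ fibres; imposing $\lambda_+=0$ with $\Delta\neq0$ makes two zeros coincide in a still-multiplicative double zero, i.e. an $I_2$ next to an $I_1$; imposing $\Delta=0$ turns the exceptional zero additive, of type $II$ when $\lambda_+\neq0$ (order of $g_3$ equal to one, leaving a residual $I_1$) and of type $III$ when $\lambda_+=0$ (orders $\ord g_2=1$, $\ord g_3\ge2$, $\ord\Delta(t)=3$, the three finite zeros having coalesced). The Euler-number identity $3 = 1+1+1 = 2+1 = 2+1 = 3$ serves as an independent check in each case.

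I expect the main obstacle to be twofold. First, pinning the fibre at infinity to be exactly $III^{*}$ rather than an adjacent type requires producing a genuinely minimal Weierstrass model and controlling the compactification that the spectral correspondence induces at the irregular point. Second, the borderline distinctions, type $II$ versus type $III$ and an $I_2$ versus a colliding pair of $I_1$'s, turn entirely on the exact orders of vanishing of $g_2(t),\,g_3(t),\,\Delta(t)$ as the parameters cross $\{\Delta=0\}$ and $\{\lambda_+=0\}$; establishing these cleanly demands that the coefficients of $g_2(t)$ and $g_3(t)$ be tracked to sufficiently high order, not merely up to their leading terms.
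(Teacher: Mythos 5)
Your setup via the refined BNR correspondence of \cite{Sz-bnr} matches the paper, but for classifying the singular members of the pencil you take a genuinely different route. The paper never passes to a Weierstrass model: it writes the affine pencil $\chi_{\vartheta,E7}(z_2,z_1,t)=0$ explicitly, solves the critical-point system $\chi = \partial_{z_2}\chi = \partial_{z_1}\chi = 0$ down to a cubic in $z_1$ (and in $w_1$ on the other chart) whose discriminant is $(a_--a_+)^2\Delta$, distinguishes $I_2+I_1$ from $3I_1$ by a symmetric-function computation deciding when two of the three critical points lie on the same member of the pencil (its Lemma~\ref{lem:equivalence}), and imports the a priori list of admissible fiber configurations next to $\Et7$ from \cite{SSS}. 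Your Tate-algorithm alternative is coherent and its bookkeeping checks out: since $t$ enters the quartic $f^2-4g$ linearly, $\deg_t g_2 = 1$ with leading coefficient proportional to $(a_--a_+)^2$, so untwistedness is exactly what forces orders $(3,\ge 5,9)$ at $t=\infty$, i.e.\ type $III^*=\Et7$; and your identity $\Delta = G_2^3-27G_3^2$ is correct because $27\cdot 16 = 432$. Arguably your route organizes the borderline cases ($II$ vs.\ $III$, $I_2$ vs.\ colliding $I_1$'s) more systematically than the paper's chart-by-chart discriminant analysis, at the cost of the quartic-to-Weierstrass reduction being tracked exactly.

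There is, however, a genuine gap: everything in your plan lives on the Hitchin base and the total space of the anticanonical pencil, whereas the theorem asserts a biregular identification of $\Mod$ itself with the complement of the $\Et7$ fiber, including over the singular spectral curves. The relative Abel--Jacobi map identifies the fibration with the Hitchin fibration a priori only over the locus of smooth curves; over a singular member the Hitchin fiber is a moduli space of stable torsion-free sheaves, not automatically the Kodaira fiber. For the integral members ($I_1$, $II$) the paper proves this identification in Proposition~\ref{prop:compactified-Picard-scheme-I1-II}. For the reducible members ($I_2$ and $III$) a nontrivial stability condition enters: parabolic stability of the Higgs bundle translates into Oda--Seshadri stability with weights $\phi_i = 1-\alpha_0^i$ constraining the bidegrees $(\delta_+,\delta_-)$ and the locus of non-local-freeness (and, for the tacnodal $III$ curve, the length of the stalk at the singular point), and the paper spends all of Section~\ref{sec:stability-analysis} showing that in the relevant degree $-1$ case the resulting moduli space is two copies of $\CP1$ glued at two points (resp.\ one point), then invoking completeness from \cite{Biq-Boa} together with Kodaira's list to pin down the algebraic type. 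Your single sentence that the biregularity ``relies on the stability analysis'' mischaracterizes the issue as openness of the stable locus; the actual missing content is this fiberwise computation over reducible spectral curves, which no amount of Tate's algorithm on the Weierstrass model can supply. Until it is carried out, your argument classifies the singular members of the pencil but does not prove the theorem as stated.
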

\begin{rk}
Since by Equation~(\ref{eq:residue}) in Subsection~\ref{subsec:spectral}
we have that $\lambda _++\lambda _-=0$, the above conditions could be
phrased in terms of $\lambda _-$ as well.
\end{rk}
 Notice that according to \cite[Proposition~4.2]{SSS} 
this is a complete list of the possible singular fibers 
 of elliptic fibrations on the rational elliptic surface without multiple fibers and having a singular fiber of type $\Et7$. 
 The proof of Theorem \ref{thm:mainE7} is given in Sections \ref{sec:E7} and \ref{sec:stability-analysis}, 
 where an explicit description of the Hitchin fibers corresponding to the reducible singular curves in the fibration 
 is given. In Section \ref{sec:stability-analysis} we also work out the stability analysis in the case of rank $2$ irregular 
 Higgs bundles in the degree $0$ case; strictly speaking we do not need this analysis to prove the theorem, nevertheless 
 we found it interesting enough to include it. 

 Similarly to Theorem \ref{thm:mainE7}, the next theorem provides a complete description of the singular fibers of   
 the fibration in the twisted case, in terms of the parameters (\ref{eq:twisted-parameters}). 

 \begin{thm}\label{thm:mainE8}
   Assume that the polar part of the Higgs bundle is twisted.  Then
   the {moduli space $\Mod ^s$} is biregular to the complement of the
   fiber at infinity (of type $\Et8$) in an elliptic fibration of the
   rational elliptic surface such that the set of other singular
   fibers of the {Hitchin} fibration is:
  \begin{enumerate}
   \item a type $II$ fiber if $D=0$;\label{thm:mainE8+II}
   \item and two type $I_1$ fibers otherwise,\label{thm:mainE8+I1+I1}
  \end{enumerate}
	where $D=\left(b_{-6}^2+4 b_{-5}\right){}^2-24 b_{-7} \left(b_{-6} b_{-4}+2 b_{-3}\right)$.
 \end{thm}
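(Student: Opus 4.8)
The plan is to mirror the untwisted (E7) argument from Theorem~\ref{thm:mainE7}, but now in the twisted setting where the fiber at infinity is $\Et8$. Since $\Et8$ has Euler number $10$ and the rational elliptic surface has total Euler number $12$, the remaining singular fibers must account for an Euler number of $2$. This immediately forces the answer structurally: either a single type $II$ fiber (Euler number $2$), or two type $I_1$ fibers (each of Euler number $1$); no other combination without multiple fibers is possible. So the real content is to identify \emph{which} of these two cases occurs in terms of the parameters $b_{-8},\ldots,b_{-3}$, and to produce the explicit discriminant-type quantity $D$.

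\medskip

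First I would write down the spectral curve of the Higgs bundle in the twisted case using the parameters (\ref{eq:twisted-parameters}), following Subsection~\ref{subsec:spectral}. Compactifying the total space of the relevant line bundle over $\CP1$ and homogenizing, the characteristic/spectral curve becomes the defining Weierstrass (or generalized Weierstrass) equation of the elliptic fibration, whose fiber at infinity is the $\Et8$ configuration. I would bring this cubic into a normal form $y^2 = x^3 + p(t)\,x + q(t)$ (or an equivalent short Weierstrass form over the base coordinate $t$), reading off the coefficients $p,q$ as explicit polynomials in the $b_i$. The passage from the raw spectral data to this Weierstrass normal form is the step that ties the abstract parameters to the geometry.

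\medskip

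Next I would compute the discriminant $\delta(t) = 4p(t)^3 + 27 q(t)^3$ of the fibration (up to the usual normalization), whose vanishing locus at finite $t$ detects the finite singular fibers. Because the total budget of finite singular Euler number is exactly $2$, the polynomial $\delta(t)$ has degree $2$ (after removing the contribution concentrated at infinity): its two roots, counted with multiplicity, carry the two remaining units. Two \emph{distinct} simple roots give two $I_1$ fibers; a single \emph{double} root gives a more degenerate fiber, which in the $\Et8$ setting is the additive type $II$ (a cuspidal cubic). The condition for a double root is the vanishing of the discriminant of $\delta(t)$ as a polynomial in $t$ --- and this is precisely where the quantity $D=\left(b_{-6}^2+4 b_{-5}\right){}^2-24 b_{-7} \left(b_{-6} b_{-4}+2 b_{-3}\right)$ should emerge. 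I would verify that $D$ equals (a nonzero constant multiple of) the discriminant of $\delta(t)$, so that $D=0$ characterizes the $II$ case and $D\neq 0$ the $(I_1,I_1)$ case; the hypothesis $b_{-7}\neq 0$ guarantees the leading coefficient is nonzero so the degree count is honest.

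\medskip

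The main obstacle I expect is bookkeeping rather than conceptual: correctly reducing the twisted spectral curve to Weierstrass form over the base and keeping track of the normalizations and weights so that the fiber at infinity genuinely has Kodaira type $\Et8$, and then matching the double-root discriminant to the stated $D$ on the nose (including the precise numerical factors $4$, $24$, etc.). A subtlety is ensuring the fiber at infinity carries \emph{exactly} Euler number $10$ and is non-multiple, so that Proposition-type constraints (cf.\ \cite{SSS}) leave only $II$ or $2\,I_1$ as alternatives; one must rule out, for instance, that a purported $I_1$ at finite $t$ silently collides with the fiber at infinity. Once the Weierstrass model is in hand these are routine checks, and the biregularity of $\Mod$ with the complement of the fiber at infinity follows as in the untwisted case from the equivalence of categories and the stability analysis already developed.
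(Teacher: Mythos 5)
Your overall strategy is sound and close in spirit to the paper's, but the computational route differs. The paper never passes to a Weierstrass model: it writes the pencil of spectral curves $\chi_{\vartheta,E8}(z_2,w_1,t)=0$ explicitly in the $(z_2,w_1)$ chart (after matching the Puiseux expansion of the eigenvalues with the polar data via Lemma~\ref{lem:an-bn}), and locates singular members by solving $\chi=0$, $\partial\chi/\partial z_2=0$, $\partial\chi/\partial w_1=0$. Since $t$ enters $\chi$ only as an additive constant, the two partial-derivative equations are $t$-free; eliminating $z_2$ yields the single quadratic $6b_{-7}w_1^2+\left(b_{-6}^2+4b_{-5}\right)w_1+b_{-6}b_{-4}+2b_{-3}=0$, whose discriminant is \emph{literally} $D$ --- no normalizing constants to chase, and $b_{-7}\neq 0$ gives the honest degree count you wanted. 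Your route (reduce to $y^2=x^3+p(t)x+q(t)$, show the finite part of the fibration discriminant $\delta(t)$ is quadratic, and match $\mathrm{disc}\,\delta$ with $D$ up to a nonzero constant) would also work, at the price of carrying out the reduction to normal form and the coefficient matching, neither of which you execute; note also the typo $4p^3+27q^3$ for $4p^3+27q^2$.

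Two points need repair. First, your opening claim that the Euler count ``immediately forces'' the dichotomy is false as stated: a fiber of type $I_2$ also has Euler number $2$, so Euler characteristic alone allows $\Et8+I_2$; excluding it requires the lattice or classification input (the trivial lattice $E_8\oplus A_1$ has rank $9$, exceeding the rank-$8$ frame of a rational elliptic surface), i.e., exactly the appeal to \cite{SSS} you make only later. The same input is needed in your discriminant step: a double root of $\delta(t)$ could a priori come from two nodes lying over the same value of $t$ (an $I_2$) rather than from a cusp, and only the classification rules this out. (The paper's computation partly sidesteps this, since a double root of its quadratic forces the critical \emph{points}, not merely the critical values, to coincide.) Second, the biregularity does not follow ``as in the untwisted case'': the categorical equivalence of \cite{Sz-bnr} assumes a semi-simple leading-order term and does not apply to the twisted setting; the paper has to prove the twisted analogue (Proposition~\ref{prop:equivalence}) via the coefficient Lemmas~\ref{lem:an-bn}--\ref{lem:dn-bn} and the proper-transform functor of \cite{ASz}, and then concludes with the relative Abel--Jacobi map and Proposition~\ref{prop:compactified-Picard-scheme-I1-II} --- no stability analysis is needed here, since $II$ and $I_1$ fibers are integral. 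If you take Proposition~\ref{prop:equivalence} as given, your plan closes; if you meant to inherit the equivalence from \cite{Sz-bnr}, that step fails.
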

 Notice again that according to \cite[Section~4.1]{SSS} this is a
 complete list of the possible singular fibers of elliptic fibrations
 without multiple fibers and having a singular fiber of type $\Et8$.
 We prove Theorem~\ref{thm:mainE8} in Section \ref{sec:E8}.

 Now let us give an outline of the paper.  In Section
 \ref{sec:prep-mat} we fix our notations and provide some well-known
 background material used later.  
 In Section \ref{sec:elliptic-fibrations} we give a detailed analysis
 of elliptic fibrations on the rational elliptic surface with one
 singular fiber of type $\Et7$ or of type $\Et8$.  In Section
 \ref{sec:E7} we first construct the rational surface $\Xtt$ governing
 the moduli space $\Mod$ in the untwisted case.  Quoting the general
 categorical equivalence of \cite{Sz-bnr}, we then achieve the proof
 of Theorem~\ref{thm:mainE7}, up to the stability analysis of
 irregular Higgs bundles with reducible spectral curve. This latter,
 in turn, is carried out in Section \ref{sec:stability-analysis}.  The
 analysis of the case of a type $I_2$ curve proceeds along the lines
 of Section 4 of Schaub's paper \cite{Sch}.

 We start Section \ref{sec:E8} by some straightforward computations
 expressing the coefficients of the Puiseux-expansion of the
 eigenvalues of the Higgs field in terms of the parameters
 (\ref{eq:twisted-parameters}).  We then go on to construct the
 rational surface $\Xtt$ governing the moduli space $\Mod$ in the
 twisted case.  Next, in Proposition \ref{prop:equivalence} we give an
 analogue of the general categorical equivalence of \cite{Sz-bnr}
 between twisted irregular Higgs bundles and some pure $1$-dimensional
 rank one sheaves on $\Xtt$.  This then allows us to prove Theorem
 \ref{thm:mainE8}.

 Let us make a few remarks on related literature. In the paper
 \cite{Sak}, spaces of initial conditions for Painlev\'e equations are
 studied using rational surfaces and root systems. In particular, in
 Appendix B {\it loc. cit.} configurations of curves similar to ours
 appear.  In \cite{GO} the singular fiber of the Hitchin map
 corresponding to a singular spectral curve of type $A_k$ is
 determined.  Our Section \ref{sec:stability-analysis} is reminiscent
 to (special cases of) their results.  The work \cite{GMN} (in
 particular, Section 9 thereof) undertakes the analysis of
 wall-crossing phenomena related to Hitchin systems with irregular
 singularities.  Finally, let us mention that we hope to treat the
 $2$-dimensional moduli spaces of rank $2$ irregular Higgs bundles
 over $\CP1$ with several marked points in the future, cf. \cite{ISSz2}.
 
\bigskip

\noindent {\bf {Acknowledgments}}: 
The third author was supported by NKFIH K120697. 
The authors were supported
by NKFIH KKP126683, and by the \emph{Lend\"ulet} program of the 
Hungarian Academy of Sciences. 
They also want to thank the referee
for many useful comments and suggestions.

 \section{Preparatory material}\label{sec:prep-mat}
 We denote by $\O$ and $K$ the sheaf of regular functions and the
 canonical sheaf respectively.  We identify holomorphic line bundles
 over $\CP1$ with their sheaves of sections.  We equally let $\O(1)$
 stand for the ample line bundle and for $n\in \Z$ set $K(n) = K
 \otimes \O(n)$.

 \subsection{The second Hirzebruch surface and the basic birational map}\label{subsec:Hirzebruch}
 Throughout the paper we will consider the  surface 
 $$
   X = \PP (K(4) \oplus \O ), 
 $$
 the fiberwise projectivization of the rank $2$ holomorphic line bundle $K(4) \oplus \O $ over $\CP{1}$.  
 Given that the line bundle $K(4)$ is isomorphic to $\O(2)$, we get that $X$ is biholomorphic to the Hirzebruch surface 
 of index $2$. The surface $X$ naturally fibers over $\CP{1}$ with fibers isomorphic to $\CP{1}$: 
\begin{equation}\label{eq:Hfib}
   p : X \to \CP{1}.
\end{equation}
 This morphism is sometimes called the ruling. 
 We denote its generic fiber by $F$ and the homology class of $F$ by $[F] \in H_2(X; \Z)$.

 It is known that $X$ admits two further remarkable closed curves denoted by
 $C_0, C_{\infty}$ and called the $0$-section and section at infinity,
 respectively.  Both $C_0$ and $C_{\infty}$ are sections of $p$, in particular
 they are biholomorphic to $\CP{1}$.  Specifically, if we let $\mathbf{0}$
 stand for the $0$-section of $K(4)$ and $\mathbf{1}$ stand for the constant
 section equal to $1$ of $\O$ then
 $$
   C_0 = \{ [\mathbf{0}_q:\mathbf{1}_q] \mid  q\in \CP{1} \}, 
 $$
 where the subscripts $q$ mean evaluation of the given sections at $q$, and as usual $[\cdot : \cdot ]$ denote projective 
 coordinates. 
 Locally, the section at infinity can be defined similarly, however it is not possible to pick a 
 single section of $K(4)$ because any such section vanishes at two points 
of $\CP{1}$. 
 So, letting $\kappa$ stand for a local non-vanishing section of $K(4)$ on some open set $U\subset \CP1$, 
 we define 
 $$
   C_{\infty}\cap p^{-1}(U) = \{ [\kappa (q):\mathbf{0}] \mid q\in U \}
 $$ where $\mathbf{0}$ stands for the $0$-section of $\O$.  It can be checked
   that if $V$ is another open subset of $\CP1$ with a non-vanishing section
   $\mu$ then these definitions of $C_{\infty}$ agree on $U \cap V$, hence
   these formulas give a well-defined curve.  We denote the homology classes
   defined by these sections by $[C_0], [C_{\infty}]$.

 The second homology $H_2(X;\Z)$ is generated by the classes of any two of the above three curves, 
 the relation between them being 
 $$
   [C_{\infty}] = [C_0] - 2 [F]. 
 $$
 The intersection pairing is given by the formulas 
 $$
   [C_{\infty}]^2 = -2, \quad [C_0]^2 = 2, \quad [F]^2 = 0, \quad [C_{\infty
   }]\cdot [C_0]=0, \quad [C_{\infty }]\cdot [F] = [C_0]\cdot [F]=1. 
 $$

 As it is well-known, $X$ is birational to $\CP2$ by the morphisms 
 \begin{equation}\label{eq:birational}
   \xymatrix{
     & \Xt \ar[ld] \ar[rd] & \\
   X \ar@{..>}[rr]^{\omega} && \CP2}
 \end{equation}
 where $\Xt \to X$ is the blow-up of a point $(\kappa(q) :
 \mathbf{1})\in X\setminus C_{\infty}$ for any $q\in U \subset \CP1$
 and local section $\kappa \in H^0(U;K(4))$, and $\Xt \to \CP2$ is the
 blow-up of two infinitely close points on $\CP2$.  For sake of
 concreteness, we may take the locus of this reduced point to be
 $(0:0:1)$.  The proper transform of the fiber $F_q$ of the map $p$ of
 Equation~\eqref{eq:Hfib} over $q\in \CP{1}$ is the exceptional
 divisor of the second blow-up of $\CP2$.  On the other hand, the
 proper (which in this case is the same as the total) transform of
 $C_{\infty}$ in $\Xt$ is equal to the proper transform of the
 exceptional divisor of the first blow-up of $\CP2$ under the second
 blow-up.  Throughout the paper we will use the above $\omega$ to go
 back and forth between $X$ and $\CP2$.

 \subsection{Elliptic fibrations and their relative compactified Picard schemes}\label{subsec:compactified-Jacobian}

 In this section we summarize some facts concerning families of curves
 that we will need in the paper.

 Let $B$ be a scheme over $\C$ and $X \to B$ be a flat projective map of relative dimension $1$. 
 For a geometric point $b$ of $B$ we call the fiber at $b$ the base change of $X$ under the inclusion map $b \to B$, 
 and we denote the fiber at $b$ by $X_b$. 
 Throughout this section we assume that for each geometric point $b$ of $B$ the fiber $X_b$ is reduced. 
 We furthermore assume that each singular fiber is of the following types: 
 \begin{enumerate}
  \item a simple nodal rational curve $I_1$; 
  \item two smooth rational curves meeting transversely in two distinct points $I_2$; 
  \item a cuspidal rational curve $II$.
 \end{enumerate}
(Again, for the definition of the various singularities appearing in
 elliptic fibrations see \cite{Kodaira} or
 Section~\ref{sec:elliptic-fibrations}. The case of type $III$
   singular fibers, also needed in the proof of
   Theorem~\ref{thm:mainE7}, will be discussed in
   Subsection~\ref{subsec:III}.)  In this situation there exists a
 relative compactified Picard scheme
 $$
   \cJ_{X|B}
 $$
 parametrizing torsion-free sheaves $\Ft$ of $\O_{X_b}$-modules of rank $1$. 
 It naturally decomposes according to the (total) degree $\delta$ of $\Ft$ as 
 \begin{equation}\label{eq:compactified-Picard-scheme-I1-II}
   \cJ_{X|B}^{\delta}
 \end{equation}
 where the degree is defined by 
 \begin{equation}\label{eq:degree-F}
    \deg (\Ft) = \chi (\Ft) - \chi (\O_{X_b})
 \end{equation}
 with $\chi$ standing for Poincar\'e characteristic.  For types $I_1$ and $II$
 the scheme $\cJ$ was constructed by \cite{DS}.  The $I_2$ case is a
 particular case of \cite{OS}; we will come back to this case in Subsection
 \ref{subsubsec:Oda-Seshadri-I2}.  In order to introduce the ideas to be used
 later in various other situations, let us give here the description of
 (\ref{eq:compactified-Picard-scheme-I1-II}) in the cases $I_1$ and $II$
 according to \cite[Section~13]{OS} and \cite[Chapter~4]{Cook}.  Our argument
 can be made more precise using generalized parabolic line bundles on the
 normalization introduced by \cite{Bh}.
 
 \begin{prop} \label{prop:compactified-Picard-scheme-I1-II}
 (Oda--Seshadri \cite{OS}, Altman--Kleiman \cite{AK})
 \begin{enumerate}
  \item  Let $X_b$ be a curve of type $I_1$. \label{prop:compactified-Picard-scheme-I1}
   Then for any $\delta \in \Z$ the scheme $\cJ_{X_b}^{\delta}$ is isomorphic to a curve of type $I_1$.
  \item  Let $X_b$ be a curve of type $II$. \label{prop:compactified-Picard-scheme-II}
   Then for any $\delta \in \Z$ the scheme $\cJ_{X_b}^{\delta}$ is isomorphic to a curve of type $II$.
 \end{enumerate}
 \end{prop}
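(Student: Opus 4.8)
The plan is to compute the compactified Picard scheme $\cJ^{\delta}_{X_b}$ for a fixed degree $\delta$ directly, using the normalization of the singular curve and keeping track of the extra data needed to descend a line bundle on the normalization to a torsion-free sheaf on $X_b$. I would treat both cases in parallel, since the only difference lies in the local structure of the single singular point: a node for $I_1$ and a cusp for $II$. In either case $X_b$ is a rational curve with arithmetic genus $1$, its normalization is $\nu\colon\CP1\to X_b$, and $\Pic^{\delta'}(\CP1)\cong\{\O(\delta')\}$ is a single point for each $\delta'$.

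\smallskip
First I would describe the generalised parabolic line bundle picture alluded to before the statement, following \cite{Bh}. A torsion-free rank one sheaf $\Ft$ on $X_b$ of degree $\delta$ is either locally free at the singular point $s$, or not. If $\Ft$ is locally free, then $\Ft=\nu_*(L)$ fails only in that one must record the gluing of the two fibers of $\nu^*L$ over the preimage of $s$; concretely, locally free sheaves of degree $\delta$ correspond to a line bundle $L$ of degree $\delta$ on $\CP1$ together with an identification of the (one- or two-point) fiber data at $\nu^{-1}(s)$, and this gluing parameter ranges over a $\C^*$ (node) or a $\C$ with an identification forced by the cusp. If $\Ft$ is not locally free at $s$, then $\Ft\cong\nu_*(M)$ is the pushforward of a line bundle $M$ on $\CP1$ and such sheaves form a single point, which will compactify the locus of locally free sheaves. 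The key input is that, because the geometric genus drops by one under normalization, the fiber of the forgetful map $\Ft\mapsto\nu^*\Ft^{\vee\vee}$ is one-dimensional, so $\cJ^{\delta}_{X_b}$ is a curve.

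\smallskip
Next I would carry out the local analysis at $s$ to identify this fiber precisely and glue in the boundary point. For the node $I_1$, the local ring is $\C[[x,y]]/(xy)$; a locally free module is determined up to isomorphism by the ratio of the two gluing scalars, giving a $\C^*$ of locally free sheaves, and the unique non-locally-free sheaf $\nu_*(M)$ attaches as the node that compactifies $\C^*$ into an $I_1$ curve. For the cusp $II$, the local ring is $\C[[t^2,t^3]]$; here the relevant deformation parameter is an element of $t\,\C[[t]]/t^2\C[[t]]\cong\C$, yielding an $\A$ of locally free sheaves, and again the single non-locally-free sheaf compactifies this to a cuspidal curve of type $II$. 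In both cases the total degree is locally constant, so the analysis is uniform in $\delta$, which gives the claimed independence of $\delta$. I expect the main obstacle to be the cuspidal case: one must verify that the compactifying point genuinely produces a cusp rather than a smooth point or a node, which requires a careful computation of the local structure of $\cJ$ near the non-locally-free sheaf — for instance by exhibiting the tangent cone via $\mathrm{Ext}^1$ of the relevant sheaves, or by appealing directly to the explicit description in \cite[Section~13]{OS} and \cite[Chapter~4]{Cook} to confirm that the scheme structure at that point is that of a cusp.
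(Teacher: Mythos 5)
Your proposal follows essentially the same route as the paper: for the nodal case the paper likewise passes to the normalization, identifies the locally free locus with the $\C^{\times}$ of gluing data $\lambda\in\PP(L_0\oplus L_{\infty})$, and compactifies by the single pushforward sheaf $\pi_*\O_{\tilde X_b}(-\{0\})\cong\pi_*\O_{\tilde X_b}(-\{\infty\})$, the only step it makes more explicit being the computation of the two limits $\lambda\to 0,\infty$ and the genus-$0$ verification that they give isomorphic sheaves, which your uniqueness-of-the-boundary-sheaf observation also delivers. For the cuspidal case the paper does exactly what you fall back on at the end of your argument, namely cite \cite[Theorem~18]{AK}, so deferring the scheme structure at the non-locally-free point to the literature matches the paper's own level of detail.
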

 \begin{proof}
 We only treat part (\ref{prop:compactified-Picard-scheme-I1}). Let 
 $$
 \pi: \tilde{X}_b \to X_b
 $$
 stand for the normalization of $X_b$. Then $\tilde{X}_b$ is a smooth rational curve. 
 Let us denote by $x_0\in X_b$ the only singular point and by $0,\infty \in \tilde{X}_b$ its preimages under the map $\pi$. 
 Then a degree $0$ line bundle on $X_b$ is the same thing as a line bundle $L$ of degree $0$ on $\tilde{X}_b$ endowed with an 
 isomorphism 
 $$
   L_0 \cong L_{\infty}, 
 $$
 where $L_p$ denotes the fiber of $L$ over $p\in \tilde{X}_b$. 
 Now there is just one degree $0$ holomorphic line bundle on $\tilde{X}_b$, namely $L = \O_{\tilde{X}_b}$, 
 so the data above reduces to just the identification of the fibers. 
 This in turn can be described by the image $\lambda \in \C^{\times} \subset L_{\infty}$ of $1\in L_0$. 
 Intrinsically $\lambda$ can be understood as an element of the projective line 
 $$
   \PP (L_0 \oplus L_{\infty}). 
 $$
 Let us denote by $L(\lambda )$ the degree $0$ line bundle on $X_b$ obtained by the above identification of the fibers; 
 clearly, for $\lambda' \neq \lambda$ the line bundle $L(\lambda' )$ is not isomorphic to $L(\lambda )$. 
 To sum up, the universal line bundle on $X_b$ is given by 
 $$
   L(\cdot ) \to \C^{\times} \times X_b \subset \PP (L_0 \oplus L_{\infty})\times X_b . 
 $$
 Our aim is to find the limit of $L(\lambda )$ as $\lambda \to 0$ or $\infty$ in $\PP (L_0 \oplus L_{\infty})$. 
 In the case $\lambda = 0$ the limit consists of a line bundle on $\tilde{X}_b$ with an identification of the fiber 
 $L_0$ to $0 \in L_{\infty}$; said differently, there is a short exact sequence 
 $$
   0 \to L(0) \to \pi_* L \to L_0 \to 0,
 $$
 hence $L(0) = \pi_* \O_{\tilde{X}_b}(-\{ 0 \})$. 
 Similarly, the limit $\lambda \to \infty$ fits into the short exact sequence 
 $$
   0 \to L(\infty ) \to \pi_* L \to L_{\infty} \to 0, 
 $$
 hence $L(\infty ) = \pi_* \O_{\tilde{X}_b}(-\{ \infty \})$. 
 As $\tilde{X}_b$ is of genus $0$, the bundles $\O_{\tilde{X}_b}(-\{ 0 \})$ and $\O_{\tilde{X}_b}(-\{ \infty \})$ are 
 isomorphic to each other, therefore so are their direct images by $\pi$. 
 The statement in the case of $I_1$ now follows. 

 As for part (\ref{prop:compactified-Picard-scheme-II}), see 
 \cite[Theorem~18]{AK}.
 \end{proof}

 \subsubsection{Oda--Seshadri stability for $I_2$ curves}\label{subsubsec:Oda-Seshadri-I2}
 In this subsection we continue the summary of known results concerning compactified Picard schemes. 
 For families with singular fibers $I_n$ for $n\geq 2$ (and more generally, 
 for reduced curves with only simple nodes as singular points) 
 the compactifications of the Picard scheme were studied in \cite{OS}. 
 In this case, the degree of the restriction of $\Ft$ to each component of $X_b$ needs to be centered about some values. 
 Let us restrict our attention to the case $n=2$ and denote by $X_+,X_-$ the irreducible components of $X_b$. 
 These are smooth curves of genus $0$, attached at two points. 
 We may assume for ease of notations that the common points are 
 $0,\infty \in X_{\pm}$ so that $0\in X_+$ is identified with $0\in X_-$ and 
 $\infty\in X_+$ is identified with $\infty\in X_-$. 
 We will also denote by $0$ and $\infty$ the point of $X_b$ obtained by the above identification. 
 The curve 
 \begin{equation*}
  \tilde{X}_b = X_+ \coprod X_-
 \end{equation*}
 is called the normalization of $X_b$. There is an obvious map 
 $$
   \sigma: \tilde{X}_b \to X_b. 
 $$
 It turns out that in order to get a moduli scheme we need to impose a further condition of stability on 
 the sheaves $\Ft$ that we wish to parametrize. This stability condition depends on some parameters 
 $(\phi_+, \phi_- )\in\R^2$ satisfying
 $$	
   \phi_+ + \phi_- = 0. 
 $$ 
 For a torsion-free coherent sheaf $\Ft$ of $\O_{X_b}$-modules of rank $1$ let us set 
 \begin{equation}\label{eq:associated-invertible-sheaf}
  \L(\Ft) = \sigma^* \Ft/\Tor^{\O_{\tilde{X}_b}}(\sigma^* \Ft)
 \end{equation}
 with $\Tor^{\O_{\tilde{X}_b}}(\sigma^* \Ft)$ denoting the torsion part of the $\O_{\tilde{X}_b}$-module $\sigma^* \Ft$, 
 and for $i\in \{\pm \}$ define 
 \begin{equation}\label{eq:delta-i}
  \delta_i = \deg ( \L(\Ft)|_{X_i} ),
 \end{equation}
 where $\deg$ stands for the degree with respect to the standard polarization on $X_i$. 
 Notice that for any $i$ there exists a canonical morphism $\Ft \to \L(\Ft)|_{X_i}$ from the composition 
 \begin{equation}\label{eq:canonical-quotient}
   \Ft \to \sigma^* \Ft \to \L(\Ft) \to \L(\Ft)|_{X_i}. 
 \end{equation}
 Setting 
 $$
   J(\Ft ) = \{ j\in \{0,\infty \} : \quad  \Ft \mbox{ is locally free near } j \},
 $$
 we have a short exact sequence of coherent sheaves 
 \begin{equation}\label{eq:OS-SES}
    0 \to \Ft \to \L(\Ft)|_{X_+} \oplus \L(\Ft)|_{X_-} \to \oplus_{j\in J(\Ft )} \C \to 0, 
 \end{equation}
 hence 
 \begin{equation}\label{eq:characteristic-F}
    \chi (\Ft ) + |J(\Ft )| = \chi ( \L(\Ft)|_{X_+} ) + \chi ( \L(\Ft)|_{X_-} ). 
 \end{equation}
 Applying this formula to $\Ft = \O_{X_b}$ we get 
 \begin{equation}\label{eq:characteristic-O}
   \chi(\O_{X_b} ) + 2 = \chi ( \O_{X_+} ) + \chi ( \O_{X_-} ). 
 \end{equation}
 Now subtracting (\ref{eq:characteristic-O}) from
 (\ref{eq:characteristic-F}) and taking into account definitions
 (\ref{eq:degree-F}) and (\ref{eq:delta-i}), we infer
 \begin{equation}\label{eq:delta=delta1+delta2+2-J}
   \deg (\Ft ) = \delta_+ + \delta_- + 2 - |J(\Ft )|.
 \end{equation}
 The construction of Oda and Seshadri uses the dual graph $\Gamma = (V , E)$ associated to $X_b$: 
 by definition, $V = \{ X_+,X_- \}= \{ +, - \}$  is the set of all connected components of the normalization $\tilde{X}_b$, 
 $E = \{ 0, \infty \}$ is the set of all double points of $X_b$, and an edge $j$ is adjacent to a vertex $i$ if and 
 only if the double point corresponding to $j$ lies on the connected component corresponding to $i$. 
 For $i\in \{\pm \}$ Oda and Seshadri define the value 
 $$
   d(J - J(\Ft ))_i 
 $$
 as the number of edges $j\in \{0,\infty \}$ such that $i$ is one of the end-points of $j$ and 
 $\Ft$ is not locally free at $j$. 
 As both $i= \pm$ are end-points of both edges $j\in \{0,\infty \}$, it is obvious from this definition that the quantity
 $d(J - J(\Ft ))_i$ does not depend on $i\in \{\pm \}$, and we have the equality 
 $$
   d(J - J(\Ft ))_i = |J - J(\Ft )| = 2 - | J(\Ft )|. 
 $$
 Furthermore, for any non-trivial subset $I' \subset \{\pm \}$, Oda and Seshadri set $I'' = \{\pm \} - I'$ and denote by 
 \begin{equation}\label{eq:delta-J-v}
   (\delta_{J(\Ft)}v(I''), \delta_{J(\Ft)}v(I''))
 \end{equation}
 the number of edges $j\in \{0,\infty \}$ such that $\Ft$ is locally free near $j$ and has one end-point in $I'$ and the 
 other one in $I''$. 
 As any non-trivial $I'\subset \{\pm \}$ is necessarily of the form $I' = \{ i \}$ for some $i\in \{\pm \}$ and every edge has both vertices $i$ as end-point, 
 clearly the last condition on the edges is vacuous. Hence (\ref{eq:delta-J-v}) simply gives the number of edges such that $\Ft$ is locally free near $j$, 
 said differently we find  
 $$
   (\delta_{J(\Ft)}v(I''), \delta_{J(\Ft)}v(I'')) =  |J(\Ft )|.
 $$
 With these preliminaries Oda and Seshadri call $\Ft$ $\phi$-semistable if for both $i\in \{\pm \}$ the inequalities 
 $$
   \delta_i + \frac 12 d(J - J(\Ft ))_i - \phi_i \leq \frac{(\delta_{J(\Ft)}v(I- \{ i \}), \delta_{J(\Ft)}v(I- \{ i \}))}2
 $$
 are fulfilled, and $\phi$-stable if the corresponding strict inequalities hold. 
 Plugging the formulas found above into this inequality we find that in the case of an $I_2$ curve $X_b$ 
 the semi-stability condition reads as 
 \begin{equation}\label{eq:stability-I2}
    \delta_i - \phi_i \leq |J(\Ft )| - 1, 
 \end{equation}
 and stability is defined by the corresponding strict inequality.
 Taking into account the equality of (\ref{eq:delta=delta1+delta2+2-J}),
 this may be equivalently rewritten as
 $$
   \delta - 1 < \delta_i - \phi_i \leq |J(\Ft )| - 1.
 $$
 The compactified Picard scheme 
 $$
   \cJ_{X_b}^{\delta,\phi}
 $$
 of degree $\delta \in \Z$ is then defined as the scheme parametrizing $\phi$-stable torsion-free sheaves of degree $\delta$ over $X_b$. 
 More precisely, Oda and Seshadri define the Picard functor of $\phi$-stable torsion-free sheaves and they show that it is 
 representable by a scheme.

 \subsection{Irregular Higgs bundles}\label{subsec:spectral}

 We study rank $2$ irregular Higgs bundles $(\E, \t)$ defined over $\CP1$, where $\E$ is a rank $2$ vector bundle 
 and $\t$ is a meromorphic section of $\End(\E) \otimes K$ called the Higgs field. 
 We set 
 \begin{equation*}
  \deg (\E ) = d.
 \end{equation*}
 We will limit ourselves to the case where $\t$ has a single pole $q$ of order $4$: 
 $$
  \theta : \E \to \E \otimes K(4\cdot \{ q \} ).
 $$

Introduce two local charts on $\CP{1}$: $U_1$ with $z_1\in \C$ where 
$\{z_1=0\}=q$ and $U_2$ with $z_2 \in \C$ where $\{z_2=\infty\}=q$.
Then over $\C$ the line bundle $K(4\cdot \{ q \} )$
admits the trivializing sections $\kappa_i$ over $U_i$ given as
\begin{align}
    \kappa_1 &= \frac{\d z_1}{z_1^4}, \notag\\
		\label{eq:kappa2}
		\kappa_2 &=\mathrm{d}z_2.
\end{align}
The conversion from $\kappa_1$ to $\kappa_2$ is the following:
\begin{equation}
	\label{eq:triv_conv}
	\kappa_1 = \frac{\mathrm{d}z_1}{z_1^{4}}=-z_2^2 \mathrm{d}z_2 = -z_2^2 \kappa_2.
\end{equation}
The trivialization $\kappa_i$ induces a trivialization $\kappa_i^2$ on 
$K(4\cdot \{ q \})^{\otimes 2}$, $i=1,2$.

The Hirzebruch surface $X$ can be covered by four charts. We will need
only two of those, since we only consider curves disjoint from the
section $C_{\infty}$ at infinity. Let us denote $V_i \subset p^{-1} (U_i)$ 
the complement of the section at infinity in $p^{-1} (U_i)$ ($i=1,2$). Let $\zeta \in
\Gamma \left(X ,p^* K(4\cdot \{ q \})\right)$ be the canonical section,
and introduce $w_{i} \in \Gamma (V_i,\O)$ by
\begin{equation*}
	\zeta = w_{i} \otimes \kappa_i.
\end{equation*}
Use (\ref{eq:triv_conv}) for the conversion between $w_1$ to $w_2$:
\begin{equation*}
	w_2 \otimes \kappa_2=\zeta=w_1 \otimes \kappa_1=-z_2^2 w_1 \otimes \kappa_2.
\end{equation*}

 In {the $\kappa_1$} trivialization of $\E$ near $q$ we have 
 \begin{equation}\label{eq:theta}
    \theta =  \sum_{n\geq -4} A_n z_1^n \otimes \d z_1,
 \end{equation}
 where $A_n\in \gl (2,\C )$. 

For the identity automorphism $\mbox{I}_{\E}$ of $\E$ we may consider the characteristic polynomial
 \begin{equation}\label{eq:char-poly1}
    \chi_{\theta} (\zeta ) = \det (\zeta \mbox{I}_{\E} - \theta) = \zeta^2 + \zeta F + G,
\end{equation} 
for some
 $$
  F\in H^0(\CP1 , K(4\cdot \{ q \} )), \quad G \in H^0(\CP1 , K(4\cdot \{ q \} )^{\otimes 2}). 
 $$
 Said differently, $F$ is a meromorphic differential and $G$ is a meromorphic quadratic differential. 
 
 Let us set $\vartheta_1 = \sum_{n\geq 0} A_{n-4} z_1^n$ and $\vartheta_2 = \sum_{n\geq 0} B_{n} z_2^n$, so that we have 
 $$
  \theta  = {\vartheta_i} \otimes \kappa_i,
 $$
where $i=1,2$.
 If we now factor {$\kappa_i$} in (\ref{eq:char-poly1}), then the characteristic polynomial can be rewritten as 
 \begin{equation}\label{eq:char-poly2}
    \chi_{\vartheta_i} (w_i) = \det (w_i \mbox{I}_{\E} - \vartheta_i) = w_i^2 + w_i f_i + g_i,
 \end{equation}
 with 
 $$
  F= f_i \kappa_i, \quad G = g_i \kappa_i^2.
 $$
  Now, as $K(4\cdot \{ q \} ) \cong \O(2)$, the coefficients {$f_i$ and $g_i$ are polynomials in $z_i$} of degree $2$ and $4$, respectively: 
\begin{align}
	\label{eq:f1}
	f_1(z_1) &= - (p_2 z_1^2 + p_1 z_1 + p_0), \\
	\label{eq:g1}
	g_1(z_1) &= - (q_4 z_1^4 + q_3 z_1^3 + q_2 z_1^2 + q_1 z_1 + q_0),
\end{align}
where all coefficients are elements of $\C$.
According to conversion (\ref{eq:triv_conv}):
\begin{align}
	\label{eq:f2}
	f_2(z_2) &= p_0 z_2^2+p_1 z_2+p_2,\\
	\label{eq:g2}
	g_2(z_2) &= -\left( q_0 z_2^4+q_1 z_2^3+q_2 z_2^2+q_3 z_2+q_4 \right).
\end{align}
 
 In the next two subsections we explain how to fix the polar parts of
 $\t$ depending on whether its leading-order term is regular
 semi-simple (the so-called untwisted case) or has a non-trivial
 nilpotent part (twisted case).

 \subsubsection{The untwisted case}

 In this case we will fix scalars $a_{\pm}\in \C$ with $a_+ \neq a_-$ and
 assume that the leading-order term of $\t$ (i.e., the coefficient $A_{-4}$ of
 $z_1^{-4}$ in its Laurent series) is semi-simple with eigenvalues $a_{\pm}$.
 Then there exists a polynomial gauge transformation in the indeterminate $z_1$
 that transforms $\t$ into the form
 \begin{equation}\label{eq:local-form-q1}
   \t = \left[ z_1^{-4} \begin{pmatrix}
         a_+ + b_+ z_1 + c_+ z_1^2 + \lambda_+ z_1^3 & 0 \\
         0 & a_- + b_- z_1 + c_- z_1^2 + \lambda_- z_1^3 
        \end{pmatrix} + \cdots 
  \right] \otimes \d z_1
 \end{equation}
 in some local trivialization of $\E$ near $q$ where the dots stand for higher-order matrices in $z_1$. 
 Indeed, up to applying a constant base change we may assume that $A_{-4}$ is diagonal. Furthermore the action of 
 $$
   \gamma(z_1) = \mbox{1} + \gamma_n z_1^n 
 $$
 on (\ref{eq:theta}) is 
 \begin{align*}
	\gamma(z_1) \theta(z_1) \gamma(z_1)^{-1} =& \left( A_{-4}z_1^{-4} +
 \cdots + A_{n-5} z_1^{n-5} \right. + \\
&+ \left. (A_{n-4} - \ad_{A_{-4}}(\gamma _n)) z_1^{n-4}
 + O(z_1^{n-3}) \right) \otimes \d z_1,
 \end{align*}
 and since the image of $\ad_{A_{-4}}$ is the subspace of off-diagonal
 matrices we can successively apply such gauge transformations 
 with $n=1,2$ and $3$ to cancel the off-diagonal terms of $A_{-3}$, then those of $A_{-2}$ and finally those of $A_{-1}$. 

 The matrices appearing in (\ref{eq:local-form-q1}) are called the
 \emph{polar part} of $\t$ at the singularity.  {From} now on we
 assume that the constants $a_{\pm}, b_{\pm}, c_{\pm},
 \lambda_{\pm}\in \C$ appearing in (\ref{eq:local-form-q1}) are fixed.
 A necessary condition for the existence of Higgs bundles with this
 polar part is given by the residue theorem which states that
\begin{equation}
\label{eq:residue}
   \lambda_+ + \lambda_- = 0. 
\end{equation} 
 We therefore assume that the parameters are fixed so that this equality holds. 

 We introduce 
 $$
  \P = 4 \cdot \{ q \}, \quad \P_{\red} = \{ q \}; 
 $$
 $\P$ is called the \emph{polar divisor} and $\P_{\red}$ the \emph{parabolic divisor}.
 A parabolic structure compatible with $(\E, \theta)$ is a choice 
 $$
   (\alpha_q^+, \alpha_q^-) \in [0,1)^2 
 $$ of two distinct numbers for the singular point $q \in \P_{\red}$;
  the scalars $\alpha_q^{\pm}$ are called parabolic weights.
   Essentially, $\alpha_q^{\pm}$ are associated to the $\lambda_{\pm}$
   in the above polar parts at $q$, and they correspond to the flag
 $$
   \E_q \supset L_q^+ \supset \{ 0 \}
 $$ invariant under the polar part of $\t$.  
The pair $(\alpha_q^+, \alpha_q^-)$ is \emph{generic} if 
$\alpha_q^+\alpha_q^-\neq 0$.
The parabolic weights
     constitute parameters appearing in the behavior of a compatible
     Hermitian--Einstein metric near the puncture, that one may freely
     prescribe independently of the eigenvalues of the residue of the
     Higgs field.  Notice that the associated graded $\mathfrak{t}$ of this 
     flag is a Cartan subalgebra uniquely determined by the polar part, so the only choice for the 
     parabolic structure is that of the weights $\alpha_q^{\pm}$, which then 
     singles out a Borel subalgebra containing $\mathfrak{t}$.
     A Higgs subbundle of $(\E, \t)$ is a pair $(\F, \t|_{\F})$ with $\F$ a holomorphic subbundle
   of $\E$ such that
 $$
   \t|_{\F} : \F \to \F \otimes K(\P ). 
 $$
 One immediately sees that if this is the case then the fiber $\F_q$ of $\F$ at $q$ must be one of the 
 eigenlines $L_q^{\pm}$. 
 In particular, if $(\E, \t)$ is endowed with a compatible parabolic structure then any  Higgs subbundle  
 $(\F, \t|_{\F})$ inherits a parabolic structure from $(\E, \t)$ in a natural way: according to whether 
 $\F_q = L_q^{\pm}$ we set 
 $$
   \alpha_q(\F ) = \alpha_q^{\pm}
 $$
 to be the parabolic weight of $(\F, \t|_{\F})$ at $q \in \P_{\red}$. 
 We then define 
 \begin{equation*}
    \pardeg (\E ) = \deg(\E ) + (\alpha_q^+ + \alpha_q^-) 
 \end{equation*}
 and 
 $$
   \pardeg (\F ) = \deg(\F ) + \alpha_q(\F ). 
 $$
 We say that $(\E, \t)$ is $\vec{\alpha}$-semistable if and only if for all Higgs subbundles $(\F, \t|_{\F})$ we have 
 $$
   \pardeg (\F ) \leq \frac{\pardeg (\E )}2
 $$
 and $\vec{\alpha}$-stable if strict inequality holds. Observe that if $\pardeg (\E ) = 0$ then these conditions simplify to 
 $$
   \pardeg (\F ) \leq 0
 $$
 (respectively $<$). 
 If $(\F, \t|_{\F})$ is a Higgs subbundle of $(\E, \t)$ then $\t$ also induces a morphism on the quotient vector bundle 
 $$
   \Qt = \E / \F,
 $$
 and we denote the resulting Higgs field by
 $$
   {\overline {\t}} : \Qt \to \Qt \otimes K(\P ). 
 $$
 In this situation we say that $(\Qt , {\overline {\t}})$ is a quotient Higgs bundle of $(\E, \t)$. 
 Furthermore, if $(\E, \t)$ is endowed with a compatible parabolic structure then it induces a parabolic structure on $\Qt$: 
 if $\alpha_q(\F ) = \alpha_q^{\pm}$ then we simply set 
 $$
   \alpha_q(\Qt ) = \alpha_q^{\mp}.
 $$
 Just as above, we set 
$$ 
\pardeg (\Qt ) = \deg(\Qt ) + \alpha_q(\Qt ).
$$
 By additivity of the degree, we have an equivalent definition of
 $\vec{\alpha}$-stability in terms of quotients: namely, $(\E, \t)$ is
 $\vec{\alpha}$-semistable if and only if for any quotient Higgs bundle $(\Qt,
 {\overline {\t}} )$ we have
 $$
   \pardeg (\Qt ) \geq \frac{\pardeg (\E )}2 
 $$
 and $\vec{\alpha}$-stable if strict inequality holds. Again, if $\pardeg (\E ) = 0$ then these conditions simplify to 
 \begin{equation*}
   \pardeg (\Qt ) \geq 0
 \end{equation*}
 (respectively $>$). 

 We will be interested in the moduli spaces 
 $$
   \Mod^{(s)s} = \Mod^{(s)s} (\CP1, q , a_{\pm}, b_{\pm}, c_{\pm}, \lambda_{\pm}, 
   \alpha_q^{\pm})
 $$ of $\vec{\alpha}$-stable (resp. $\vec{\alpha}$-semi-stable) irregular
   Higgs bundles on $\CP1$ of $0$ parabolic degree with the polar parts at $q$
   as prescribed in (\ref{eq:local-form-q1}), up to gauge equivalence. 
   The spaces $\Mod^{(s)s}$ are called \emph{irregular Dolbeault moduli spaces}.
   The general construction of moduli spaces $\Mod^s$ parametrizing isomorphism
   classes of stable objects was given in \cite{Biq-Boa} using gauge
   theoretic methods.  In particular, it is proved that if semi-stability is
   equivalent to stability and the adjoint orbits of the residues are closed,
   then the moduli space $\Mod^s$ is a complete hyper-K\"ahler manifold.  On
   the other hand, in order to consider moduli spaces $\Mod^{ss}$
   parametrizing equivalence classes of semi-stable objects one needs to
   slightly relax the notion of equivalence.  Namely, to any strictly
   semi-stable object $(\E,\theta )$ it is possible to find a Jordan--H\"older
   filtration
 $$
   0 \subset (\E_1,\theta_1 ) \subset (\E,\theta ) 
 $$
 (in our case necessarily of length $2$) such that both $(\E_1,\theta_1 )$ and $(\E_2,\theta_2 )$ are stable 
 (where  $\E_2 =\E / \E_1$ and $\theta_2$ is the Higgs field on $\E_2$ induced by $\theta$). 
 We then call 
 $$
   (\E_1,\theta_1 ) \oplus (\E_2,\theta_2 )
 $$ the associated graded irregular Higgs bundle of $(\E,\theta )$ and
 we call $(\E,\theta )$ and $(\E' ,\theta' )$ S-equivalent if their
 associated graded irregular Higgs bundles agree.  This definition
 reduces to isomorphism in the case of stable irregular Higgs bundles.
 We expect that there exists a quasi-projective smooth coarse moduli
 scheme $\Mod^{ss}$ parametrizing S-equivalence classes of semi-stable
 irregular Higgs bundles using a geometric invariant theory
   construction. Such a construction for the ramified irregular de
   Rham moduli space is given in \cite{Inaba}. It is highly plausible
   that the construction of Inaba carries over to provide a ramified
   irregular Dolbeault moduli space too.  In this paper we indicate an
   alternative approach to study the irregular Dolbeault moduli space.
   Namely, the relative Picard scheme was constructed by Grothendieck
   as an algebraic variety (for an exposition of the construction by
   S. Kleiman, see \cite[Theorem~9.4.8]{FGA_Pic}).  The refined
   BNR-correspondence \cite[Theorem~5.4]{Sz-bnr} is a biholomorphism
   between moduli spaces of irregular Higgs bundles of prescribed
   polar part and the Picard scheme of sheaves on ruled surfaces. The
   definition of this map is purely algebraic, hence the algebraic
   structure of the relative Picard scheme endows the complex analytic
   manifold $\Mod^{(s)s}$ with the structure of a complex algebraic
   variety. In particular, Theorems \ref{thm:mainE7} and
   \ref{thm:mainE8} provide a $\C$-analytic description of the
   corresponding moduli spaces.

 \subsubsection{The twisted case}

 We now consider the case where $A_{-4}$ has non-trivial nilpotent part. 
 In a convenient trivialization we then have 
 $$
   A_{-4} = \begin{pmatrix}
             b_{-8} & 1 \\
              0	& b_{-8}
            \end{pmatrix}
 $$
 for some $b_{-8}\in\C$ (the labeling will shortly become clear). 
 Observe that  $\im (\ad_{A_{-4}})$ is spanned by the matrices 
 $$
   \begin{pmatrix}
             0 & 1 \\
              0	& 0
            \end{pmatrix}, 
               \quad
   \begin{pmatrix}
             1 & 0 \\
              0	& -1
            \end{pmatrix}. 
 $$
 Using the same argument as in the twisted case it follows that there exists a
 polynomial gauge transformation $\gamma(z)$ 
 that transforms $\t$ into the form 
 \begin{equation}\label{eq:local-form-E8}
   \t = \left( \begin{pmatrix}
             b_{-8} & 1 \\
              0	& b_{-8}
            \end{pmatrix} z^{-4} 
            +
         \begin{pmatrix}
             0 & 0 \\
              b_{-7} & b_{-6}
            \end{pmatrix} z^{-3} 
               +
         \begin{pmatrix}
             0 & 0 \\
              b_{-5} & b_{-4}
            \end{pmatrix} z^{-2}
               +
         \begin{pmatrix}
             0 & 0 \\
              b_{-3} & b_{-2}
            \end{pmatrix} z^{-1} 
            + O(1)
            \right) \otimes \d z.
 \end{equation}
 Observe that by virtue of the residue theorem this time we have 
 $$
   b_{-2} = 0.
 $$
 On the other hand, notice that if $b_{-7} = 0$ in the above matrix then
 $A_{-4}$ can be diagonalized using the meromorphic gauge transformation 
 $$
   \gamma(z) = \mbox{1} + \begin{pmatrix} 
                       0 & -b_6^{-1} \\
                       0 & 0
                     \end{pmatrix} z^{-1}
 $$
 unless $b_{-6}$ also vanishes. 
 Since in this section we are interested in the case where $A_{-4}$ is not
 diagonalizable (even by meromorphic gauge transformations), 
 from now on we therefore assume that 
 \begin{equation*}
    b_{-7} \neq 0. 
 \end{equation*}
 and that the constants $b_{-8}, \ldots, b_{-3} \in \C$ appearing in (\ref{eq:local-form-E8}) are fixed. 

 This time the data of the parabolic structure compatible with $\theta$ is trivial, i.e. is the trivial flag 
 $$
   \E_q \supset \{ 0 \} 
 $$
 with an arbitrary weight $\alpha_q$. Indeed, as the rank of $\E$ is $2$, the only other possibility would be 
 a full flag as in the untwisted case; however, then the graded pieces of the polar parts would be of dimension $1$, 
 and we could not get nilpotent graded polar parts.

 Again, we will be interested in the moduli spaces 
 $$
   \Mod^{(s)s} = \Mod^{(s)s} (\CP1, q , b_{-8},\ldots ,b_{-3}, \alpha_q)
 $$
 of S-equivalence classes of (semi-)stable irregular Higgs bundles on $\CP1$ with polar part at $q$ with respect to some trivialization 
 as prescribed in (\ref{eq:local-form-E8}). We will see that in this case the weight $\alpha_q$ actually plays no role. 
 The existence of a moduli space parametrizing isomorphism classes of stable objects should follow from \cite{Biq-Boa}, 
 and we again expect that there should exist a quasi-projective smooth coarse moduli scheme $\Mod^{ss}$ parametrizing 
 S-equivalence classes of semi-stable objects.

 \subsection{Spectral data of irregular Higgs bundles and the irregular Hitchin map}\label{subsec:Hitchin}

 A categorical equivalence between the groupoid of irregular Higgs bundles with semi-simple polar part and the relative 
 Picard functor of a Hilbert scheme of curves on a certain multiple blow-up $\Xtt$ of the Hirzebruch surface $X$ from Subsection 
 \ref{subsec:Hirzebruch} was described in \cite{Sz-bnr}. 
 We will refer to this equivalence as the refined Beauville--Narasimhan--Ramanan (BNR-) correspondence. 
 The sheaf associated to an irregular Higgs bundle by this correspondence is called its {\emph{spectral sheaf}}, usually denoted by $\Ft$.  
 The general formula relating the degrees appearing in the two setups is 
 \begin{equation}\label{eq:delta-d}
   \delta = d + \frac 12 r (r-1) \deg ( K(4)) = d + 2, 
 \end{equation}
 where $d = \deg (\E)$ and $\delta$ denotes the degree of $\Ft$
 defined in (\ref{eq:degree-F}).  (Recall that in the latter formula
 $X_b$ denotes the support of $\Ft$.)  We refer the reader to
 \cite{Sz-bnr} for the general correspondence; in
 Subsection~\ref{subsec:RefBNR} we will spell it out explicitly in the
 untwisted case.  In the twisted case we prove an analogous result in
 Section \ref{sec:E8}.  We expect that such a result should hold in
 general, and not only in the particular case we are treating here.

 A closely related concept is {that of} the {\emph{irregular
     Hitchin map}}.  Namely, to an irregular Higgs bundle one may
 associate the support $\widetilde{\Sigma}$ of $\Ft$, called the
 \emph{spectral curve}. With the notations of Subsection
   \ref{subsec:compactified-Jacobian}, when $\widetilde{\Sigma}$ is
   singular it is an instance of one of the curves $X_b$.
 Roughly speaking, in the untwisted case it turns out that the
   prescription \eqref{eq:local-form-q1} on the eigenvalues of the
   polar parts amounts to requiring the two branches of the spectral
   curve $X_b$ to pass through the points $a_{\pm}$ in the fiber of
   $X$ over $q$ (with respect to a natural fiber coordinate), with
   first-, second- and third-order holomorphic derivatives with
   respect to $z$ equal to $b_{\pm}, c_{\pm}, \lambda_{\pm}$
   respectively.  Said differently, if one defines $\Xtt$ as the
   $8$-times blow-up of $X$ along the corresponding non-reduced
   subscheme, then the proper transform of $\widetilde{\Sigma}$
   naturally lies within $\Xtt$. Moreover, it turns out that the
   proper transform of $\widetilde{\Sigma}$ must intersect the cycles
   in second homology with prescribed intersection numbers.  To sum
   up, these conditions mean that the curve $\widetilde{\Sigma}$
   belongs to a complete linear system $|D|$ of curves on $\Xtt$
   determined by the map $\Xtt \to X$.  Finally, this curve must not
   intersect set-theoretically a given divisor (called \emph{divisor
     at infinity}); this then shows that
 the natural map
 $$
   (\E, \theta) \mapsto \widetilde{\Sigma}
 $$
 obtained by composing the refined BNR-correspondence above and the forgetful functor mapping a sheaf to its support, 
 actually takes values in an affine subspace $|D|_0 \subset |D|$. 
 For more details, see Proposition \ref{prop:E7} or \cite[Theorem 4.3]{Sz-bnr}.
 For an extension to the unramified case, see Proposition \ref{prop:equivalence}. 
 Therefore, the above association gives rise to the \emph{irregular Hitchin map}
 $$
   H : \Mod^{ss} \to |D|_0. 
 $$ 

We {call} $H$ the irregular Hitchin map {because} it is a
straightforward analogue of the map defined in \cite{Hit}.  It follows
from \cite{Biq-Boa} that for generic choices of the singularity
parameters (namely, assuming that the adjoint orbits of the residues
are closed), 
the irregular Dolbeault moduli spaces are complete
holomorphic-symplectic smooth manifolds.  Based on this fact and the
above analogy, it is therefore natural to expect that $H$ is a proper
map which endows $\Mod^{ss}$ with the structure of an algebraically
completely integrable system.

 \section{Elliptic fibrations on rational elliptic surfaces}
\label{sec:elliptic-fibrations}

 In this section we will study singular fibers of elliptic fibrations
 on rational elliptic surfaces. As 4-manifolds, these surfaces are
 diffeomorphic to the 9-fold blow-up $ \CP2 \# 9\overline{\CP{}}^2$ of
 the complex projective plane $\CP2$. The potential singular fibers
 are classified by Kodaira \cite{Kodaira}.  Here we will concentrate
 only on those fibrations which contain singular fibers of types
 ${\tilde {E}}_8$ and ${\tilde {E}}_7$. (For the plumbing description
 of these singular fibers see Figure~\ref{fig:SingFibs}.)

 \begin{figure}[hb]
 \begin{center}
\includegraphics[width=11cm]{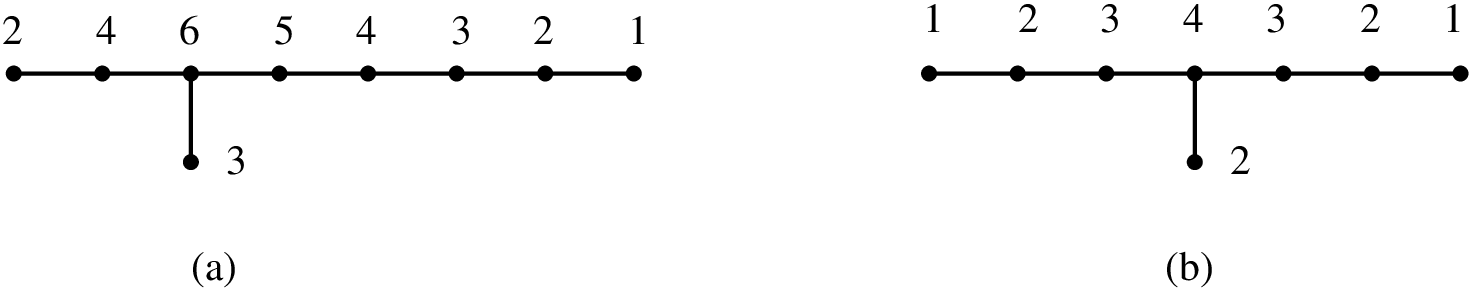}
 \end{center}
 \caption{\quad Plumbings of singular fibers of types (a) ${\tilde
     {E}}_8$ and (b) ${\tilde {E}}_7$ (integers next to vertices
   indicate the multiplicities of the corresponding homology classes
   in the fiber). All curves are rational, all intersections are
     transverse, and all self-intersections are equal to $-2$.}
\label{fig:SingFibs}
\end{figure}

One 
way to construct an elliptic fibration on the rational
 elliptic surface is by giving a pencil of cubic curves in $\CP2$ (with
 the additional property that the pencil contains at least one smooth
 cubic) and then blowing up the basepoints of the pencil. In turn, the
 pencil can be given by specifying two degree-3 homogeneous polynomials
 $p_0$ and $p_1$ in three variables and considering the curves $C(p_t)$
 corresponding to the polynomials $p_t=t_0p_0+t_1p_1$ for
 $t=[t_0:t_1]\in \CP1$.  The pencil will not contain smooth curves if
 $p_0$ and $p_1$ admit common singular points, hence  this case
will be avoided. 

 Recall that the singular fiber in an elliptic fibration with a single
 node is called $I_1$ (or a fishtail fiber), the fiber with a cusp
 singularity (which can be modeled by the cone on the trefoil knot
 $T_{2,3}$, or can be given by the local equation $y^2=x^3$) is a cusp
 fiber (also denoted by $II$). A singular fiber with two rational
 curves intersecting each other in two distinct points (and having
 self-intersection $-2$) is an $I_2$ fiber. If the two rational
 curves are tangent to each other (still with self-intersection $-2$)
 then we have a type $III$ fiber.  (There are further singular fibers
 in the Kodaira list, but we will not meet them in our subsequent arguments.)

 The determination of the type of all singular fibers in an elliptic
 fibration specified by two cubic polynomials $p_0, p_1$ can be 
 a rather tedious problem. By choosing specific polynomials, the existence of
 two singular fibers is quite transparent, but the identification of the further
 ones usually requires further computations.

\subsection{The case of singular fibers of type \texorpdfstring{$\Et8$}{E\texteightinferior}}
\label{ss:e8}
 Suppose first that we have an elliptic fibration on $\CP2 \# 9
 {\overline {\CP{}}}^2$ with a singular fiber of type $\Et8$. We will
 also assume that the fibration comes from blowing up a pencil, hence
 it admits a section. This section then necessarily intersects the
 $\Et8$-fiber in the unique curve with multiplicity 1. Consider a
 generic fiber $C$ of the fibration, and blow down the section and then
 consecutively the next six curves of the $\Et8$-fiber. The image of
 $C$ (now of self-intersection 7) will intersect two curves $E_1, E_2$
 (both of self-intersection $(-1)$) from the fiber, one of which (say
 $E_2$) is further intersected by the leaf $E_3$ of the $\Et8$ fiber,
 and is of multiplicity 2.  (We point out that, as it is obvious from
 the construction, the two curves $E_1, E_2$ intersect $C$ at the same
 point, cf. the left diagram of Figure~\ref{fig:conf}.)

 \begin{figure}[hb]
 \begin{center}
 \includegraphics{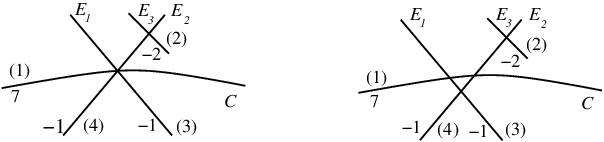}
 \end{center}
 \caption{\quad Curve configurations when blowing down a section and 
 a singular fiber of type (a) $\Et8$ and (b) $\Et7$.
 Integers next to the curves indicate self-intersections, while integers in 
 brackets are multiplicities.}
 \label{fig:conf}
 \end{figure}

 There is a choice in continuing the blow-down process. If we blow
 down $E_1$, then we get a configuration of curves in the second
 Hirzebruch surface, where the image of $E_2$ is a fiber, $E_3$ is the
 section at infinity, and $C$ blows down to a multisection,
 intersecting the generic fiber twice and being tangent to $E_2$. On
 the other hand, blowing down $E_2$ first, and then $E_3$, the curve
 $C$ blows down to a cubic curve $C_0$ in $\CP2$, and the image of
 $E_1$ will be a projective line, triply tangent to $C_0$ (at one of
 its inflection points).  The two results are related by the
 birational morphism $\omega$ of Equation~\eqref{eq:birational}.

 In conclusion, 
\begin{thm} \label{thm:e8fibration}
Any elliptic fibration on $\CP2\# 9 {\overline
   {\CP{}}}^2$ with a section and with a singular fiber of type $\Et8$
 can be blown up from a pencil defined by either 
\begin{enumerate}
\item the union of the infinity section
 (with multiplicity 2) with a fiber (with multiplicity 4) in
 the second Hirzebruch surface, and with a double section which is
 tangent to the chosen fiber, or 

\item a cubic curve in $\CP2$, with a triple
 tangent line (at one of the inflection points of the cubic), the latter with
 multiplicity three. 
\end{enumerate}
The converse statements also hold: pencils given by (1) or
(2) above give rise to fibrations (after the infinitely close blow-ups
of the base point) to elliptic fibrations containing an $\Et8$ fiber.\qed
\end{thm}

\subsection{The case of singular fibers of type \texorpdfstring{$\Et7$}{E\textseveninferior}}
\label{sec:singfib_E7}
 Next we would like to analyze pencils resulting in fibrations with singular
 fibers of type $\Et7$. Assume therefore that the fibration on $\CP2\# 9
 {\overline {\CP{}}}^2$ contains such a singular fiber, and that the fibration
 results from a pencil, hence it also admits a section. Indeed, since the
 pencil should have at least two basepoints (otherwise the fibration has a
 singular fiber which contains a chain of 8 curves with self-intersection
 $(-2)$, which is impossible next to a fiber of type $\Et7$), we can assume
 that there are two sections, intersecting the type $\Et7$ singular fibers in
 the two $(-2)$-curves with multiplicity 1. As before, let $C$ be a regular
 fiber of the fibration.

 After 7 blow-downs (by blowing down the two sections and two,
 respectively three curves from the two long arms of the $\Et7$-fiber)
 we get a configuration of 4 curves: the image of the fiber $C$, two
 $(-1)$-curves (called $E_1$ and $E_2$) intersecting it in two
 distinct points (and also intersecting each other) and a $(-2)$-curve
 $E_3$ intersecting $E_2$ only, cf. the right diagram of
 Figure~\ref{fig:conf}. As in the case of an $\Et8$-fiber, we have a
 choice in performing the next blow-down. If we blow down $E_1$, we
 get a configuration again in the second Hirzebruch surface, while if
 we blow down $E_2$ (and then $E_3$), we get a configuration in
 $\CP2$.  Consequently we get
\begin{thm}\label{thm:e7fibration}
Any elliptic fibration on $\CP2\# 9 {\overline
   {\CP{}}}^2$ with two sections and with a singular fiber of type $\Et7$
 can be blown up from a pencil defined by either 
\begin{enumerate}
\item the union of the infinity section
 (with multiplicity 2) with a fiber (with multiplicity 4) in
 the second Hirzebruch surface, and with a double section which intersects
the distinguished fiber in two distinct points, or
\item
a cubic in $\CP2$, with a tangent line which intersects the cubic in
one further point; the tangent line with multiplicity three. 
\end{enumerate}
The converse of this statement also holds:
the pencils specified in (1) or (2) above --- after infinitely close
blow-ups of the base points --- give rise to elliptic fibrations 
containing an $\Et7$ fiber.\qed
\end{thm}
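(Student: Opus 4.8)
The plan is to run the construction of Section~\ref{sec:elliptic-fibrations} in reverse, exactly as in the proof of Theorem~\ref{thm:e8fibrations}: starting from the given fibration on $\CP2\# 9 {\overline {\CP{}}}^2$, I would contract the two sections together with all but three components of the $\Et7$-fiber until reaching either the second Hirzebruch surface or $\CP2$, and then read off the pencil from the images of the contracted configuration. The input data are the two sections (which, since the pencil has at least two basepoints, meet the two multiplicity-$1$ end-components of the $\Et7$-fiber transversally), the plumbing of the $\Et7$-fiber recorded in Figure~\ref{fig:SingFibs}(b)---a chain of eight $(-2)$-curves carrying the indicated multiplicities---and a generic fiber $C$ with $C^2=0$.

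First I would blow down the two sections. Each is a $(-1)$-curve meeting one multiplicity-$1$ end-component of the $\Et7$-fiber as well as $C$; contracting them therefore turns those two end-components into $(-1)$-curves, raises $C^2$ to $2$, and---crucially---brings $C$ into contact with the fiber at the two distinct points that were the intersections of $C$ with the sections. I would then contract curves consecutively inward, two along one arm and three along the other, at each stage blowing down the component that has just become a $(-1)$-curve and whose contact point with $C$ has propagated from the end, bookkeeping self-intersections and multiplicities via $\bar D^2 = D^2 + (D\cdot E)^2$ and $\bar D\cdot\bar{D'} = D\cdot D' + (D\cdot E)(D'\cdot E)$. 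After these seven contractions one lands on a surface with $b_2=3$ carrying the four-curve configuration of the right diagram of Figure~\ref{fig:conf}: the image of $C$, now of self-intersection $7$, two $(-1)$-curves $E_1,E_2$ meeting it, and a $(-2)$-curve $E_3$ meeting $E_2$ only.

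With this configuration in hand the two alternatives follow as in the $\Et8$ case. Contracting $E_1$ yields a surface with $b_2=2$ which I identify with the second Hirzebruch surface: the image of $E_2$ acquires self-intersection $0$ and is a fiber, $E_3$ becomes the section at infinity of self-intersection $-2$, and the image of $C$ becomes a curve of self-intersection $8$, that is, a double section. Contracting instead $E_2$ and then $E_3$ produces $\CP2$, in which the image of $C$ is a cubic $C_0$ (of self-intersection $9$) and the image of $E_1$ a line; the two outputs are interchanged by the birational map $\omega$ of Equation~\eqref{eq:birational}. Reversing either chain of blow-downs then exhibits the original fibration as blown up from the asserted pencil, proving the theorem.

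The one point that genuinely distinguishes this from the $\Et8$ argument---and which I expect to be the crux---is the intersection pattern of $E_1,E_2$ with $C$. In the $\Et8$ case a single section and a single chain force the two exceptional curves to meet $C$ at one common point, so that after the final contraction the double section is tangent to the fiber (respectively the line is inflectional, i.e.\ triply tangent to the cubic). Here, by contrast, the two sections meet $C$ at distinct points and these are propagated independently along the two arms of the $\Et7$-plumbing, so $E_1$ and $E_2$ meet $C$ at two \emph{distinct} points. I would verify this distinctness directly from Figure~\ref{fig:SingFibs}(b); it is precisely what makes the double section meet the distinguished fiber transversally in two points in case~(1), and the tangent line meet the cubic in one further point in case~(2).
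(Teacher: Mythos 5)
Your proposal follows the paper's own argument essentially verbatim: the same seven blow-downs (the two sections, then two and three curves along the two arms of the $\Et7$-fiber), arriving at the four-curve configuration of the right diagram of Figure~\ref{fig:conf}, followed by the same branching choice --- contracting $E_1$ to reach the second Hirzebruch surface or $E_2$ and then $E_3$ to reach $\CP2$, the two outcomes being related by $\omega$. The point you single out as the crux, that $E_1$ and $E_2$ meet $C$ in two \emph{distinct} points (unlike the $\Et8$ case), is exactly the distinction the paper records in its description of the configuration, so your proof is correct and takes the same route.
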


Assume now that the elliptic fibration contains (besides the type
$\Et7$-fiber) a further singular fiber which is either of type $I_2$
or of type $III$.  By further inspecting the blow-down process, now
choosing the curve $C$ to be a singular fiber of type $I_2$ or $III$
we get:
\begin{prop}\label{prop:DoublesectionAnd2section}
If an elliptic fibration with a fiber of type $\Et7$ and two sections 
contains a further singular fiber either of type $I_2$ or of type $III$,
then the pencil of curves resulting from the repeated blow-down in the
second Hirzebruch surface contains a double section which is the union of
two sections of the ruling of the surface. \qed
\end{prop}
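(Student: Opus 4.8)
The plan is to rerun the blow-down process of Theorem~\ref{thm:e7fibration}, but now take for $C$ the singular fiber of type $I_2$ or $III$ rather than a generic fiber. The key observation is that the eight curves contracted in that process---the two sections, the five components coming from the two long arms of the $\Et7$-fiber, and finally $E_1$---all arise from the $\Et7$-fiber and the two sections, so they are completely independent of which fiber of the elliptic fibration we single out as $C$. Consequently the birational morphism $\pi$ from the rational elliptic surface to the Hirzebruch surface $X$ is fixed once and for all, and the whole content of the proposition reduces to analyzing the image $\pi(C)$ of our chosen singular fiber.

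First I would record that, exactly as for a generic fiber, $\pi(C)$ is a member of the pencil on $X$, i.e.\ a curve in the class $2[C_\infty]+4[F]=2[C_0]$; since $2[C_0]\cdot[F]=2$ this is a double section of the ruling $p$. As $C$ is of type $I_2$ or $III$, it is reduced with exactly two irreducible components $C_+,C_-$, each a smooth rational $(-2)$-curve, and neither of them is among the eight contracted curves; hence neither is collapsed by $\pi$, and $\pi(C)=\pi(C_+)\cup\pi(C_-)$ is a reducible double section with two irreducible components. Writing $d_\pm=\pi(C_\pm)\cdot[F]$ and using the projection formula together with $C\cdot\pi^*[F]=\pi_\ast C\cdot[F]=2[C_0]\cdot[F]=2$, we get $d_++d_-=2$. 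The whole point is then to exclude the uneven split $(d_+,d_-)=(0,2)$ and its mirror, for in the balanced case $d_+=d_-=1$ each $\pi(C_\pm)$ meets every ruling fibre once, hence is a section of $p$.

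The crux, and the step I expect to be the only genuine obstacle, is ruling out a vertical component. For this I would introduce the auxiliary genus-$0$ fibration $\rho=p\circ\pi$ on the rational elliptic surface, whose fibres lie in the class $\pi^*[F]$ and, by the computation above, meet every elliptic fibre twice. A component satisfies $d_\pm=0$ precisely when it is contained in a fibre of $\rho$, so I must show that neither $C_\pm$ is. Now a fibre of $\rho$ is reducible only over those points of $\CP1$ whose ruling fibre passes through a base point of the pencil; and because the special member of the pencil is $2[C_\infty]+4[F]$ while a generic double section is disjoint from $C_\infty$ (as $2[C_0]\cdot[C_\infty]=0$), all eight base points lie on the single distinguished fibre $F_0$, the image of $E_2$. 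Thus $\rho$ has exactly one reducible fibre, sitting over that one point, and its components are $\Et7$-fibre components, sections, and exceptional curves---none of which is a component of $C$, since $C$ is a distinct elliptic fibre, disjoint from the $\Et7$-fibre. Therefore no $C_\pm$ is vertical, which forces $d_+=d_-=1$, so $\pi(C)$ is the union of the two sections $\pi(C_+),\pi(C_-)$ of the ruling, as claimed; whether these two sections meet in two distinct points or are tangent records exactly the distinction between the $I_2$ and $III$ cases.
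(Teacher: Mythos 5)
Your proof is correct and takes essentially the same route as the paper, whose entire argument is to ``further inspect the blow-down process, now choosing the curve $C$ to be a singular fiber of type $I_2$ or $III$'': you rerun the blow-down of Theorem~\ref{thm:e7fibration} with $C$ the reducible fiber, note that the eight contracted curves are independent of $C$, and verify that the image is a union of two sections, with your base-point argument (all eight base points lie over the distinguished fiber since a double section in class $2[C_0]$ is disjoint from $C_{\infty}$) supplying precisely the verticality exclusion the paper leaves to inspection. The one step you leave implicit---that a component $C_{\pm}$ cannot coincide with a whole irreducible fibre of $\rho$---is immediate from your own observation that fibres of $\rho$ meet every elliptic fibre twice while $C_{\pm}\cdot C=0$ (or from $C_{\pm}^2=-2\neq 0$), so no genuine gap remains.
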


The same argument (now by blowing down the configuration to $\CP2$)
shows that the pencil in $\CP2$ can be chosen to be generated by a
projective line $\ell$ (with multiplicity three, just as before) and
another curve, which has two components, a line $\ell _1$ and a
quadric $q$, where $\ell$  intersects $\ell _1$ in one point $P$, while
$\ell$ is tangent to the quadric $q$ (in a point distinct from $P$). The
pencil gives rise to a fibration which has (besides a type $\Et7$ fiber)
an $I_2$ fiber if $\ell _1$ intersects $q$ in two distinct points, and 
a type $III$ fiber if $\ell _1$ is tangent to $q$.

\section{The untwisted case}\label{sec:E7}

\subsection{The refined BNR-correspondence}
\label{subsec:RefBNR}
 We start by applying the refined BNR-corre\-spon\-dence of
   \cite{Sz-bnr} to describe a certain blow-up $\Xtt$ of the surface
 $\Xt$ whose geometry governs $\Mod$.  {We have already referred to
   $\Xtt$ in Subsection \ref{subsec:Hitchin}; here we will make its
   construction rigorous.}  Namely, a local trivialization of $K(4)
 \cong K\otimes \O( 4 \cdot \{ q \})$ near $z_1 = 0$ is given by
 $z_1^{-4}\d z_1$, so the expressions $z_1^{-4}(a_{\pm} + b_{\pm} z_1
 + c_{\pm} z_1^2 + \lambda_{\pm} z_1^3) \d z_1$ specify non-reduced
 subschemes of dimension $0$ and length $4$ in {$X$}. We define
 $\Xtt$ as the blow-up of {$X$} along these subschemes, with
   $\Xt$ being an intermediate step in the blowing up.
	
 In concrete terms, as in Section \ref{sec:prep-mat} $q$ denotes the
 point with $z_1=0$, ${U_1} = \C = \CP1 \setminus \{ \infty \}$,
 ${\kappa_1} = z_1^{-4}\d z_1$, and parametrize $p^{-1}
 ({U_1}) \setminus C_{\infty}$ by coordinates $(z_1, w_1)\in
 \C^2$ as follows: we let the point of $X$ corresponding to these
 parameters be $ [w_1 \kappa_{1} : \mathbf{1}]$. We may assume that
 $\Xt$ is the blow-up of $X$ in the point $ [a_+ \kappa_1(0) :
   \mathbf{1} ]$, i.e.  over $ p^{-1} ({U_1})$ the surface $\Xt$
 is defined by
 $$
   ( z_1 w_1' - (w_1 - a_+) z_1' ) \subset \C^2 \times \CP1
 $$
 where $[z_1' : w_1'] \in \CP1$ are homogeneous coordinates corresponding to the direction of tangent vectors at $z_1 = 0, w_1 = a_+$. 
 We denote this blow-up by 
 $$
   \sigma_{1+} : X_{1+} = \Xt \to X
 $$
 and its exceptional divisor by 
 $$
     E_{1+} = \{ z_1 = 0, w_1 = a_+, [z_1' : w_1'] \} .
 $$
 According to \cite[(4.25)]{Sz-bnr}, we now need to blow up $\Xt$ in the point 
 $$
   [z_1' : w_1' ] = [1 : b_+ ] \in E_{1+}. 
 $$
 For this purpose, we introduce the local chart ${U_{1+}'}$ of $\Xt$ given by $z_1'\neq 0$. 
 Here we may normalize $z_1' = 1$, and so a local coordinate chart of ${U_{1+}'}$ is given by $z_1,w_1'$. 
 The blow-up 
 $$
   \sigma_{2+} : X_{2+} \to X_{1+}
 $$
 we consider is then the blow-up of the point with coordinates $z_1 = 0, w_1' = b_+$. 
 Similarly to the above, we denote the exceptional divisor of $\sigma_{2+}$ by $E_{2+}$, 
 and we get canonical coordinates $[z_1'':w_1'']$ parametrizing $E_{2+}$ starting from 
 the coordinates $z_1, z_1'$. 
 Again by \cite[(4.25)]{Sz-bnr}, we now blow up the point 
 $$
   [z_1'' : w_1'' ] = [1: c_+ ] \in E_{2+}
 $$
 and call the corresponding birational map 
 $$
  \sigma_{3+} : X_{3+} \to X_{2+}. 
 $$
 Finally, just as above we get canonical coordinates $[z_1''':w_1''']$ on the exceptional divisor 
 $E_{3+}$ of $\sigma_{3+}$, and we define the blow-up 
 $$
   \sigma_{4+} : X_{4+} \to X_{3+} 
 $$
 of the point with coordinates 
 $$
   [z_1''' : w_1''' ] = [1 : \lambda_+ ] \in E_{3+}. 
 $$
 We then let $X_{0-} = X_{4+}$ and carry out a similar procedure for the length $4$ non-reduced subschemes 
 corresponding to the expression $z_1^{-4}(a_{-} + b_{-} z_1 + c_{-} z_1^2 + \lambda_{-} z_1^3) \d z_1$. 
 We denote the birational maps and their exceptional divisors by 
 $$
   \sigma_{i-} : X_{i-} \to X_{(i-1)-}
 $$
 and $E_{i-}$ for $1\leq i \leq 4$. 
 By an abuse of notation, we will continue to denote the proper transforms of $E_{i+}$ and $E_{i-}$ along the subsequent maps 
 $\sigma_{j+}$ and $\sigma_{j-}$ by the same symbols. 
 The surface of interest to us is 
 \begin{equation}\label{eq:Xtt-untwisted}
    \Xtt = X_{4-} \xrightarrow{\sigma} X. 
 \end{equation}
 Clearly then there is a diagram 
 $$
   \xymatrix{
     & \Xtt \ar[ld] \ar[rd] & \\
   X \ar@{..>}[rr]^{\omega} && \CP2}
 $$ where the left-hand map is a blow-up of $X$ in $8$ points and the
   right-hand map is a blow-up of $\CP2$ in $9$ points.  In particular,
   as a smooth $4$-manifold $\Xtt$ is diffeomorphic to $ \CP2 \#
   9\overline{\CP{}}^2 $. By an abuse of notation, we will denote the
   composition of $\Xt \to X$ with $p : X \to \CP1$ by $p : \Xt \to
   \CP1$ and also the composition of $\Xtt \to X$ with $p : X \to
   \CP1$ by $p : \Xtt \to \CP1$.

 It follows from \cite[Theorem~4.3]{Sz-bnr} that irregular rank $2$
 Higgs bundles on $\CP1$ with a pole of order $4$ of the local form
 (\ref{eq:local-form-q1}) are in one-to-one correspondence with data
 of the form $(\St, \Ft)$ where $\St$ is a closed holomorphic curve in
 $\Xtt$ satisfying certain properties and $\Ft$ is a torsion-free
 sheaf of $\O_{\St}$-modules of some given degree $\delta$.  
\begin{defn}\label{def:Hitchin_base}
 Let $|D|_0$ denote the set of closed holomorphic curves in $\Xtt$ satisfying 
the following three conditions:
\begin{enumerate}[label=(\alph*)]  
  \item $\St$ is disjoint from the proper transform of $C_{\infty}$ in
    $\Xtt$; \label{item:SpectralCurveCond1}
  \item $p:\St \to \CP1$ is a double ramified
    cover; \label{item:SpectralCurveCond2}
  \item $\St$ intersects the exceptional divisors $E_{4\pm}$ in one
    point each, away from their ``points at infinity'' $[z_1^{(iv)} :
      w_1^{(iv)}] = [ 0 : 1 ]\in
    E_{4\pm}$. \label{item:SpectralCurveCond3}
\end{enumerate}
\end{defn}
In particular, conditions \ref{item:SpectralCurveCond2}--\ref{item:SpectralCurveCond3} imply that any $\St\in |D|_0$ intersects neither the proper 
 transform $\tilde{F}_0$ of the fiber $F_0$ in $\Xtt$ nor the exceptional divisors $E_{i\pm}$ with $1\leq i \leq 3$.

\begin{prop} \label{prop:E7}
 There exists an elliptic fibration 
 $ \Xtt \to \CP1$
 with an $\Et7$ singular fiber $\Xtt_{\infty}$ over $\infty \in \CP1$, such that 
 $\Mod ^{ss}$ is a relative compactified Picard scheme of
 torsion-free sheaves of relative degree $1$ over $\Xtt \setminus \Xtt_{\infty}$. 
\end{prop}

\begin{proof}
 Let $F$ denote the fiber class of the Hirzebruch surface, 
 $\tilde{F}_0$ the proper transform under the map~\eqref{eq:Xtt-untwisted} of the fiber $F_0$ of $p$ over $q$, and
 recall again our convention that $E_{i \pm}$ stands for the proper
 transform in $\Xtt$ of the exceptional divisor of the blow-up
 $\sigma_{i \pm}$.  
 The Picard group of $\Xtt$ is generated by the classes $F, C_{\infty}, E_{i\pm} (1\leq i \leq 4)$, 
 with only non-zero intersection numbers among these classes  
 \begin{align*}
  C_{\infty}^2 & = -2 \\
  F \cdot C_{\infty} & = 1 \\
  E_{i\pm}^2  & = - 2 \quad (1\leq i \leq 3) \\
  E_{4\pm}^2  & = - 1 \\
  E_{i +}\cdot E_{(i+1) +} & = 1 \quad (1\leq i \leq 3)\\
  E_{i -}\cdot E_{(i+1) -} & = 1 \quad (1\leq i \leq 3). 
 \end{align*}
 We note the relation 
 \begin{equation}\label{eq:F0}
    F = \tilde{F}_0 + \sum_{i=1}^4 (E_{i +} + E_{i -}). 
 \end{equation}
 Consider the divisor 
 \begin{equation*}
  \Xtt_{\infty} = 2 C_{\infty} + 4\tilde{F}_0 + 3 (E_{1+} + E_{1-}) +
  2 (E_{2+} + E_{2-}) + (E_{3+} + E_{3-})
 \end{equation*}
 of $\Xtt$ and the linear system $|D|$ generated by $\Xtt_{\infty}$ in
 $\Xtt$. A straightforward check using the above intersection numbers
 shows that $\Xtt_{\infty}$ is of type $\Et7$, in particular its
 self-intersection number is $0$.

For completing the proof of the proposition, we need a few lemmas.

 \begin{lem}\label{lem:intersection}
  A projective curve $\St \subset \Xtt$ belongs to $|D|$ if and only if 
 \begin{itemize}
  \item $\St \cdot C_{\infty} = 0$; 
  \item $\St \cdot F = 2$; 
  \item $\St\cdot E_{4+} = 1 = \St\cdot E_{4-}$. 
 \end{itemize}
 \end{lem}
 \begin{proof}
 An easy check shows that for $\St = \Xtt_{\infty}$, the algebraic intersection numbers satisfy all the asserted requirements.
 For any curve $\St \in |D|$ the line bundles $\O_{\Xtt}(\St)$ and $\O_{\Xtt}(D)$ are linearly equivalent. 
 On the other hand, for any other projective curve $C\subset \Xtt$ we have 
 $$
  \St\cdot C = \langle c_1(\O_{\Xtt}(\St)), [C] \rangle.
 $$
 Since the first Chern class only depends on the linear equivalence class, the above observation implies the ``only if'' direction. 
 
 For the other direction, note that any curve $\St$ with given intersection numbers is homologous to $\Xtt_{\infty}$ because the intersection lattice of 
 $\Xtt$ is non-degenerate and generated by $F, C_{\infty}, E_{i\pm} (1\leq i \leq 4)$. 
 Said differently, the line bundles $\O_{\Xtt}(D)$ and $\O_{\Xtt}(\St)$ have the same first Chern class 
 \begin{equation}\label{eq:c1}
  c_1 (\O_{\Xtt}(D)) = c_1 (\O_{\Xtt}(\St )).  
 \end{equation}
 Now, the Picard group $\Pic(\Xtt )$ can be written as an extension 
 $$
  0 \to \Pic^0(\Xtt ) \to \Pic(\Xtt ) \xrightarrow{c_1} H^2 (\Xtt, \Z ) \to 0
 $$
 with 
 $$
  \Pic^0(\Xtt ) = H^{1,0}(\Xtt) / H^1(\Xtt, \Z). 
 $$
 Taking into account that $H^1(\Xtt, \C) = 0$, this implies that $\Pic^0(\Xtt ) = 0$. Then \eqref{eq:c1} implies that $\O_{\Xtt}(D) = \O_{\Xtt}(\St)$.  
 \end{proof}

 The conditions of Lemma~\ref{lem:intersection} are counterparts in
 terms of algebraic intersection numbers of the geometric
 conditions~\ref{item:SpectralCurveCond1}--\ref{item:SpectralCurveCond3}
 of Definition~\ref{def:Hitchin_base}.  (Just as there, it follows
 from these requirements and the relation~\eqref{eq:F0} that $\St\cdot
 E_{i\pm} = 0$ for all $1\leq i \leq 3$.)  From this, we see that
 $|D|_0\subseteq |D|$. The base of $|D|$ is $P(H^0(\Xtt,
 \O_{\Xtt}(D)))$.

 \begin{lem}\label{lem:pencil}
  We have $\dim_{\C} H^0(\Xtt, \O_{\Xtt}(D)) = 2$, i.e. $|D|$ is a pencil. 
 \end{lem}
 \begin{proof}
  Consider the short exact sequence 
  $$
    0 \to \O_{\Xtt} \to \O_{\Xtt}(D) \to \O_{D}(D) \to 0
  $$
  of sheaves on $\Xtt$, and its associated long exact sequence in cohomology 
  $$
    0 \to H^0(\Xtt, \O_{\Xtt}) \to H^0(\Xtt, \O_{\Xtt}(D)) \to  H^0(\Xtt, \O_{D}(D)) \to H^1(\Xtt, \O_{\Xtt}) = 0.
  $$
  Since $D\cdot D =0$, we have 
  $$
     H^0(\Xtt, \O_{D}(D)) =  H^0(\Xtt, \O_{D}) = H^0(D, \O_{D}) \cong \C.
  $$
  This implies the assertion. 
 \end{proof}

 \begin{lem}\label{lem:ag}
  Let $\St\in |D|_0$.  Then, 
\begin{enumerate}
\item \label{item:CP2} the restriction of the birational map
  $\Xtt \to \CP2$ establishes a biholomorphism between $\St$ and a
  cubic curve in $\CP2$;  
\item \label{item:X} the restriction of the birational map \eqref{eq:Xtt-untwisted} establishes a
  biholomorphism between $\St$ and a closed holomorphic curve in $X$.
\end{enumerate}
In particular, by \eqref{item:CP2} $\St$ is of arithmetic genus $1$.
\end{lem}
 \begin{proof}
  Under the map $\Xtt \to \CP2$ the generic fibers of $p:\Xtt \to
  \CP1$ get mapped to curves of self-intersection number $1$, i.e. to
  lines $\ell$ in $\CP2$ passing through $[0:0:1]$.  Thus the image of
  a curve $\St$ is a curve in $\CP2$ intersecting the generic such
  line $\ell$ in two points distinct from $[0:0:1]$ (corresponding to
  the intersection points of $\St$ with the generic fiber of $\Xtt$).
  Furthermore it is easy to see that the point $[0:0:1]$ is a base
  point of such curves $\St$, but blowing it up once is sufficient to
  separate them.  In different terms, $\St$ intersects the generic
  line $\ell$ passing through $[0:0:1]$ in $3$ points (counted with
  multiplicity).  By the conditions, no component of $\St$ gets
  contracted to a point and moreover no two points of $\St$ get
  identified. We infer that the restriction is one to one. 
  This proves part \eqref{item:CP2}.
  
  For part \eqref{item:X}, it is sufficient to prove that the centers of the quadratic transformations 
  $\sigma_{i \pm}$ are smooth points of $\sigma(\St)$ and its proper transforms. This immediately follows as 
  $\sigma(\St)$ transversely intersects the fiber of $X$ over $q$ in two distinct points. 
 \end{proof}

 \begin{lem}
  The map $\Xtt \to |D|$ is a fibration. 
 \end{lem}
 \begin{proof}
  The union of the curves $\St \in |D|$ is of dimension $2$, so it is
  equal to $\Xtt$ because this latter is irreducible.  
Since the curves in $|D|$ have zero self-intersection, the pencil is 
indeed a fibration.
 \end{proof}

 \begin{lem}
  The curve $\Xtt_{\infty}$ is the only element of $|D|\setminus |D|_0$. 
 \end{lem}
 \begin{proof}
 The curve $E_{4+}$ results from the last blow-up, it is a section of
 the elliptic fibration $\Xtt \to |D|$. Through any point of $E_{4+}$
 there passes a unique curve $\St \in |D|$.  Now, $\Xtt_{\infty}$ is
 the curve passing through the point $[ 0 : 1 ]\in E_{4+}$.
 Therefore, any fiber $\St \in |D| \setminus \{ \Xtt_{\infty} \}$
 intersects $E_{4+}$ transversely in a point different from $[ 0 : 1
 ]\in E_{4+}$, and is distinct from the fiber $\Xtt_{\infty}$.  This
 shows that the geometric
 conditions~\ref{item:SpectralCurveCond1}--\ref{item:SpectralCurveCond3}
 listed in Definition~\ref{def:Hitchin_base} are fulfilled.
 \end{proof}

With the above lemmas at hand, now we are ready to return to the
  proof of Proposition~\ref{prop:E7}.  \cite[Theorem~4.3]{Sz-bnr} now
implies that $\Mod ^{ss}$ is a relative compactified Picard scheme of
torsion-free sheaves of relative degree $1$ over
$  \Xtt \setminus \Xtt_{\infty} \to |D|_0.$ This concludes the proof of
Proposition~\ref{prop:E7}.
\end{proof}

\subsection{Local description of irregular Higgs bundles}
\label{subsec:E7_local}
Next we will start identifying the singular fibers of the resulting
elliptic fibration. 
In the untwisted case, the matrices in (\ref{eq:local-form-q1}) 
gave a local form for $\t$. 
The matrix $A_{-4}$ will encode the base locus of a pencil 
and the matrices $A_{-3}, A_{-2}$ and $A_{-1}$ will represent the 
tangents and the higher order derivatives of the curves of a pencil.

We wrote $\chi_{\vartheta_i} (w_i)$ as the characteristic
  polynomial of $\t$ in the trivialization given by $\kappa_i$ and
  $\kappa_i^2$ ($i=1,2$), see~\eqref{eq:char-poly2}, cf. also
  Subsection~\ref{subsec:spectral}.  The polynomials
  $\chi_{\vartheta_i} (w_i)$ are the local forms of the spectral
  curves in $X$.  In concrete terms, using
  Equations~\eqref{eq:f1},~\eqref{eq:g1},~\eqref{eq:f2} and
  \eqref{eq:g2}:
\begin{align}
	\chi_{\vartheta_1} (z_1, w_1) & = w_1^2 - \left(p_2 z_1^2+p_1 z_1+p_0\right) w_1 -\left(q_4 z_1^4+q_3 z_1^3+q_2 z_1^2+q_1 z_1+q_0\right), \notag \\
	\label{eq:char-poly3_2}
	\chi_{\vartheta_2}  (z_2, w_2) & = w_2^2 + \left(p_0 z_2^2+p_1 z_2+p_2\right) w_2 - (q_0 z_2^4+q_1 z_2^3+q_2 z_2^2+q_3 z_2+q_4).
\end{align}

The roots of the characteristic polynomial in $w_1$ have expansions
with respect to $z_1$ {near $q$}. The first several terms of the
expansion are the same as the diagonal elements of the matrix in
(\ref{eq:local-form-q1}). More precisely, the series of the
''negative'' root of $\chi_{\vartheta} (w_1)$ up to third order is
equal to $a_- + b_- z_1 + c_- z_1^2 + \lambda_- z_1^3$ and the
''positive'' root up to third order is equal to $a_+ + b_+ z_1 + c_+
z_1^2 + \lambda_+ z_1^3$. From these equations we get the following
expressions:
\begin{align*}
	f_1(z_1) =& - \left( \left(c_- + c_+\right) z_1^2 + \left(b_- + b_+\right) z_1 + (a_- +a_+)\right), \\
	\begin{split}
	g_1(z_1) =& - q_4 z_1^4+ \left(a_- \lambda_+ + a_+ \lambda_- +
        b_+ c_- + b_- c_+\right) z_1^3 + \\ &+ \left( a_+ c_- + a_-
        c_+ + b_- b_+ \right) z_1^2 + \left(a_+ b_-+a_- b_+\right) z_1
        + a_- a_+.
	\end{split}
\end{align*}
According to the residue theorem (\ref{eq:residue}) we know that
$\lambda _+ + \lambda _-=0$, hence we can eliminate 
$\lambda_-=-\lambda_+$. 
It turns out that these equations do not depend on $q_4$ (the
  coefficient of $g_1$ and $g_2$), thus we set
$$
	t=q_4.
$$
Hence we get a pencil parametrized by $t$ with base locus 
$(0,a_+)$ and $(0, a_-)$ in $\C^2$: 
\begin{align*}
\begin{split}
 \chi_{\vartheta_1} (z_1, w_1, t) =& w_1^2 - \left(\left(c_-+c_+\right) z_1^2 +\left(b_-+b_+\right) z_1 + a_-+a_+ \right) w_1 -\\
	& - t z_1^4 + \left(\left(a_--a_+\right) \lambda _++b_+ c_-+b_- c_+\right) z_1^3 + \\
	& + \left(a_+ c_-+a_- c_++b_- b_+\right) z_1^2 + \left(a_+ b_- + a_- b_+\right) z_1 + a_- a_+ = 0.
\end{split}
\end{align*}
We note that $\chi_{\vartheta_1} (z_1, w_1, t)$ intersects the fiber component of 
the fiber with multiplicity $4$ in two distinct points 
and every spectral curve is a double section.

If we rewrite the Equation~\eqref{eq:char-poly3_2}, then we get a pencil on the chart $U_2$:
\begin{align*}
\begin{split}
 \chi_{\vartheta_2} (z_2, w_2, t) =& w_2^2 + f_2(z_2) w_2 + g_2(z_2,t) = \\
	=& w_2^2 + \left(\left(a_-+a_+\right) z_2^2+\left(b_-+b_+\right) z_2+c_-+c_+\right) w_2 +\\
	& + a_- a_+ z_2^4 + \left(a_+ b_-+a_- b_+\right) z_2^3 + \left(a_+ c_-+a_- c_++b_- b_+\right) z_2^2 +\\
	& + \left(\left(a_--a_+\right) \lambda _++b_+ c_-+b_- c_+\right) z_2 -t = 0.
\end{split}
\end{align*}

More precisely, the pencil in the Hirzebruch surface $X$ is defined by 
$\chi_{\vartheta_1} (z_1, w_1, t)$ 
and the union of the section at infinity with fiber $F_0$. 
According to the converse direction of Theorem~\ref{thm:e7fibration}, the
pencil gives rise to an elliptic fibration in
$\CP2\# 9 {\overline {\CP{}}}^2$ with a singular fiber of type
$\Et7$. 

Our goal is to find the other singular fibers in the
  pencil. For this reason, we will identify the singular points on the
  spectral curves.  The spectral curves intersect the fiber component
  of the curve $C_{\infty}$ at infinity (whose fiber component is of
  multiplicity $4$) in two distinct points and according to Condition
  \ref{item:SpectralCurveCond3} of Definition~\ref{def:Hitchin_base},
  the pencil has no singular point on the distinguished fiber $F_0$.
  Thus it is sufficient to consider the $\kappa_2$ trivialization,
  i. e. the chart $(z_2, w_2)$. For identifying the singular fibers in
  the pencil, we look for triples $(z_2, w_2,t)$ such that $(z_2,
  w_2)$ fits the curve with parameter $t$ and the partial derivatives
  below vanish:
\begin{subequations}
\label{eq:partials}
\begin{align}
	\label{eq:partials_first}
	\chi_{\vartheta_2} (z_2, w_2, t) &= 0, \\
	\label{eq:partials_second}
	\frac{\partial \chi_{\vartheta_2}(z_2, w_2, t)}{\partial w_2} &=0, \\
	\label{eq:partials_third}
	\frac{\partial \chi_{\vartheta_2}(z_2, w_2, t)}{\partial z_2} &=0.
\end{align}
\end{subequations}
These triples are in one-to-one correspondence with singular points of
singular fibers. Every spectral curve $X_b$ is a double section of the
ruling on the Hirzebruch surface $X$, thus every triple $(z_2, w_2,t)$
satisfying Equations~\eqref{eq:partials} maps to distinct points under
the ruling~$p$. Indeed, if one fiber (with fixed $t$ value) contains
two singular points with the same $z_2$ coordinate then the
corresponding fiber of $p$ would intersect $X_b$ with multiplicity
higher than two. Furthermore, it cannot happen that two singular
points with the same $z_2$ coordinate lie on distinct fibers (two
distinct $t$ values): we will see in Equation~\eqref{eq:E7_t} that the
$t$ values are determined by the $z_2$ values. Consequently the
$z_2$-values from the triples $(z_2, w_2,t)$ are in one-to-one
correspondence with singular points.

Computing the partial derivatives and expressing $w_2$ from 
Equation~\eqref{eq:partials_first} and $t$ from 
Equation~\eqref{eq:partials_second} by $z_2$ we get:
\begin{align}
	w_2 (z_2)=& -\frac{1}{2} \left(\left(a_-+a_+\right) z_2^2+\left(b_-+b_+\right) z_2+c_-+c_+\right), \notag \\
	\begin{split}\label{eq:E7_t}
	t (z_2) =& -\frac{1}{4} \left(\left(a_--a_+\right){}^2 z_2^4 + 2 \left(a_--a_+\right) \left(b_--b_+\right) z_2^3\right. +\\
	&+ \left(2 \left(a_--a_+\right) \left(c_--c_+\right)+\left(b_--b_+\right){}^2 \right) z_2^2 +\\
	&+ \left.\left( 2 \left(b_--b_+\right) \left(c_--c_+\right)- 4 \left(a_- - a_+\right) \lambda _+ \right) z_2 + \left(c_-+c_+\right){}^2 \right).
	\end{split} 
\end{align}
Substitute the resulting expression into the Equation~\eqref{eq:partials_third} and get
\begin{equation}
\begin{split}
	0=& 2 \left(a_--a_+\right){}^2 z_2^3+3 \left(a_--a_+\right) \left(b_--b_+\right) z_2^2 + \\
	&+ \left(2 \left(a_--a_+\right) \left(c_--c_+\right)+\left(b_--b_+\right){}^2\right)z_2 + 2 \left(a_+-a_-\right) \lambda _+ +\left(b_--b_+\right) \left(c_--c_+\right).
	\end{split}
	\label{eq:E7_cubic} 
\end{equation}

The roots of this polynomial correspond to the $z_2$ values of
  the singular points in the singular curves on the Hirzebruch surface
  $X$, which become fibers on the 8-fold blow up.  Since we have a
  cubic polynomial in Equation~\eqref{eq:E7_cubic}, generally we get
  three distinct roots, and this corresponds to the fact that there
  are at most three singular fibers in the fibration (next to
  $\Et7$).

The cubic polynomial of \eqref{eq:E7_cubic} with variable 
$z_2$ has multiple roots if and only if its discriminant 
\begin{equation*}
	\left(a_--a_+\right){}^2 \left(\left(\left(b_--b_+\right){}^2-4 \left(a_--a_+\right) \left(c_--c_+\right)\right){}^3- 432 \left(a_--a_+\right){}^4 \lambda _+^2\right)
\end{equation*}
vanishes.

With the choice $a_- = a_+$ the configuration  reduces to the case of a
type  $\Et8$ singular fiber (to be treated in Section~\ref{sec:E8}), therefore we can assume that
$a_- \neq a_+$. 
We define 
\begin{equation*}
	\Delta:=\left(\left(b_--b_+\right){}^2-4 \left(a_--a_+\right) \left(c_--c_+\right)\right){}^3- 432 \left(a_--a_+\right){}^4 \lambda _+^2.
\end{equation*}

We analyze the cases depending on how many singular points are in the
fibration.

\subsubsection{One root}

The cubic in (\ref{eq:E7_cubic}) has one root  if and only if the
discriminant $\Delta$ vanishes, and the derivative of~
(\ref{eq:E7_cubic}) with respect to {$z_2$} has one root, 
hence the discriminant of the latter quadratic equation also vanishes.
This means that {$\Delta_0=0$} with 
\begin{equation*}
	\Delta_0 := 3 \left(a_--a_+\right){}^2 \left( \left(b_--b_+\right){}^2 - 4 \left(a_--a_+\right) \left(c_--c_+\right)\right).
\end{equation*}
It is easy to see $\Delta={\Delta_0}=0$ is equivalent to
$\Delta=\lambda_{\pm}=0$.

When (\ref{eq:E7_cubic}) has one root then the pencil has one singular curve in
the corresponding chart. By the classification of singular fibers in elliptic 
fibrations, the unique singular fiber with a single singular point 
besides an $\Et7$-fiber must be of type $III$.

Conversely, if the fibration has a type $III$ fiber, then the pencil has no other singular point. 
This requires that the cubic in (\ref{eq:E7_cubic}) has only one root, which 
is equivalent to $\Delta=\lambda_{\pm}=0$.
Hence we get part~(\ref{thm:mainE7+III}) of Theorem~\ref{thm:mainE7}.

\subsubsection{Two roots}
At the same time we verified part~(\ref{thm:mainE7+II+I1}) as well,
since $\Delta=0$ and ${\Delta_0}\neq 0$ is equivalent to $\Delta=0$ and
$\lambda_{\pm}\neq 0$, furthermore Equation~(\ref{eq:E7_cubic}) has two
distinct roots. By the classification the only possibility is the fibration 
has fibers of types $II$ and $I_1$.

\subsubsection{Three roots}
Now, we consider the $\Delta \neq 0$ case, where the cubic in (\ref{eq:E7_cubic}) 
has three distinct roots and the fibration has three singularities. 

\begin{lem}\label{lem:equivalence}
$\Delta \neq 0$ and $\lambda_{\pm}=0$ holds if and only if the pencil
  has $I_2$ and $I_1$ singularities.
\end{lem}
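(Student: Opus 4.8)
The plan is to obtain the equivalence purely by combining two results already at our disposal, without computing the three values of $t$ explicitly. The first input is part~(\ref{thm:mainE7+III}) of Theorem~\ref{thm:mainE7}, verified above, which says that the further singular fiber is of type $III$ exactly when $\Delta=0$ and $\lambda_\pm=0$. The second input is the last Remark above, which---via Proposition~\ref{prop:DoublesectionAnd2section} and the discussion closing Section~\ref{sec:elliptic-fibrations}---asserts that the fibration has a further fiber of type $III$ or of type $I_2+I_1$ if and only if the pencil contains a reducible double section splitting into two sections, and that this in turn is equivalent to $\lambda_\pm=0$.

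Granting these, the argument is a short piece of logic. For the forward implication I would assume $\Delta\neq 0$ and $\lambda_\pm=0$: the last Remark forces the configuration to be $III$ or $I_2+I_1$, while part~(\ref{thm:mainE7+III}) excludes $III$ because the latter requires $\Delta=0$, leaving $I_2+I_1$. For the converse I would assume the fibration carries an $I_2$ and an $I_1$ fiber: the presence of the $I_2$ fiber, together with the $\Et7$ fiber and the two sections, places us in the hypothesis of Proposition~\ref{prop:DoublesectionAnd2section}, so the pencil contains a reducible double section and hence $\lambda_\pm=0$; and if $\Delta=0$ held too, part~(\ref{thm:mainE7+III}) would force type $III$, contradicting the presence of $I_2$, so $\Delta\neq 0$.

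The one point that must be pinned down is the exclusivity and exhaustiveness implicit in this reasoning: on the locus $\lambda_\pm=0$ the remaining fibers can only be $III$ or $I_2+I_1$, and these never occur simultaneously. This is where I would invoke the explicit geometry behind the last Remark. Setting $\lambda_+=0$ in \eqref{eq:E7_z1} factors the defining cubic as $\bigl(2(a_--a_+)+(b_--b_+)z_1\bigr)\bigl((a_--a_+)+(b_--b_+)z_1+(c_--c_+)z_1^2\bigr)$, and in this case $\Delta=\bigl((b_--b_+)^2-4(a_--a_+)(c_--c_+)\bigr)^3$ is exactly the cube of the discriminant of the quadratic factor. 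Thus $\Delta\neq 0$ is precisely the transversal case, in which the line $\ell_1$ and quadric $q$ of the reducible double section meet in two distinct points and yield an $I_2$ fiber, while $\Delta=0$ is the tangent case yielding $III$. An Euler-number count---$e(\Et7)=9$ against the total $12$ of the rational elliptic surface leaves $3$, matched by $I_2+I_1$---confirms that exactly one additional $I_1$ fiber completes the fibration and that no further singular fibers are possible. The main obstacle is therefore not the logic but this geometric identification: making certain that the two roots of the quadratic factor really sit on one fiber as the two nodes of an $I_2$, and that the transversal/tangent dichotomy of $\ell_1\cup q$ is faithfully recorded by $\Delta\neq 0$ versus $\Delta=0$. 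If a self-contained verification is preferred, the fallback is to substitute these two roots into \eqref{eq:E7_t} and check directly that they give a common value of $t$ distinct from that of the linear root.
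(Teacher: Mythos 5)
Your proof is correct, but it takes a genuinely different route from the paper's. The paper proves Lemma~\ref{lem:equivalence} by direct computation: for the forward direction it solves the cubic (\ref{eq:E7_z1}) explicitly at $\lambda_+=0$ and substitutes the three roots into (\ref{eq:E7_t}), exhibiting exactly two values of $t$ (one curve with two nodes, one with a single node); for the converse it forms the product $\left( t(y_1)-t(y_2) \right)\left( t(y_2)-t(y_3) \right)\left( t(y_3)-t(y_1) \right)$ over the roots of (\ref{eq:E7_w}), rewrites it via Vieta's formulas in the elementary symmetric polynomials, and finds it equal to a constant multiple of $\lambda_+\Delta$, so $\Delta\neq 0$ forces $\lambda_+=0$. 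You instead deduce the lemma formally from material already established before it: part (\ref{thm:mainE7+III}) of Theorem~\ref{thm:mainE7} (whose converse direction, $III\Rightarrow\Delta=0$, is indeed available from the root-counting of the cubic carried out earlier), the Remark resting on Proposition~\ref{prop:DoublesectionAnd2section}, and the exclusivity guaranteed by the classification of \cite{SSS} (equivalently your Euler-number count $12-9=3$). This is shorter and more conceptual, and your factorization of (\ref{eq:E7_z1}) at $\lambda_+=0$ is exactly the right geometric substantiation: the quadratic factor cuts out $s_+\cap s_-$ for the reducible member $s_\pm\colon z_2=a_\pm+b_\pm z_1+c_\pm z_1^2$, and $\Delta$ is the cube of its discriminant, so $\Delta\neq 0$ versus $\Delta=0$ records transversal versus tangent intersection; in particular your fallback substitution into (\ref{eq:E7_t}) is unnecessary, since both intersection points automatically lie on the single reducible member. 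Two points deserve care. First, the equivalence ``double section splits $\Leftrightarrow\lambda_\pm=0$'' is only asserted as ``easily seen'' in the Remark, and the paper's computational proof of the lemma is partly what independently backs up that circle of statements; your route places the full weight on the Remark, so its one-line justification (a section is quadratic in $z_1$, hence has vanishing $z_1^3$-coefficient, hence its proper transform passes through the fourth infinitely near point exactly when $\lambda_\pm=0$) should be spelled out. Second, your discriminant identification lives on the $(z_2,z_1)$ chart, where for $c_-=c_+$ (or $b_-=b_+$) an intersection point escapes to $z_1=\infty$ and the quadratic factor drops degree; the paper's proof treats precisely these degenerations on the $(z_2,w_1)$ chart, though since your main argument via the Remark and part (\ref{thm:mainE7+III}) is chart-free, this affects only the supporting discussion, not the validity of the lemma.
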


\begin{proof}
The first direction is simple, because the equation of
{(\ref{eq:E7_cubic})} can be easily solved and the three $z_2$ values can
be substituted into Equation~(\ref{eq:E7_t}). We get two distinct $t$ values,
which correspond to one curve with two singularities and another curve
with one singularity. The three $z_2$ values are:
\begin{align*}
	(z_2)_1 &= \frac{b_+-b_-}{2 \left(a_--a_+\right)}, \\
	(z_2)_{2,3} &= \frac{b_+-b_- \pm \sqrt{\left(b_--b_+\right){}^2-4 \left(a_--a_+\right) \left(c_--c_+\right)}}{2 \left(a_--a_+\right)}.
\end{align*}
The two $t$ values are:
\begin{align*}
t_1 =& -\frac{1}{64 \left(a_--a_+\right){}^2} \left(16 \left(a_--a_+\right){}^2 c_-^2 + 8 \left(a_--a_+\right) c_- \cdot \right.\\
	&\left. \cdot \left(4 \left(a_--a_+\right) c_+-\left(b_--b_+\right){}^2\right) + \left(\left(b_--b_+\right)^2 + 4 \left(a_--a_+\right) c_+\right)^2 \right), \\
t_{2,3} =& -c_- c_+.
\end{align*}

Let us see now the converse direction. If the pencil contains an $I_2$
and an $I_1$ curve then the equation of (\ref{eq:E7_cubic}) has three
distinct roots. Let us denote these roots by $y_1, y_2,
y_3$. {Denote the value of $t$ by $t_i$ after the substitution
  of $z_2$ with $y_i$ in Equation~(\ref{eq:E7_t}).}  Two roots (say
$y_1$ and $y_2$) provide singularities on the same curve, that is,
{$t_1=t_2$}. Equivalently
\begin{equation*}
\begin{split}
	0={t_1-t_2}=& 4 \left(a_--a_+\right) \lambda _+-\left(\left(a_--a_+\right) \left(y_1+y_2\right)+b_--b_+\right) \cdot\\
	&\cdot \left(\left(a_--a_+\right) \left(y_1^2+y_2^2\right)+ \left(b_- -b_+\right) \left(y_1+y_2\right)+2 \left(c_-- c_+\right)\right),
\end{split}
\end{equation*}
where we simplify with $\frac{1}{4}(y_1-y_2)$.

Obviously, the three distinct roots provide two values for $t$ if and only if
\begin{equation*}
	0= \left( t_1-t_2 \right)\left( t_2-t_3 \right)\left( t_3-t_1 \right).
\end{equation*}

This expression is a symmetric polynomial in $y_1, y_2, y_3$, hence
can be written as a polynomial of the elementary symmetric polynomials
${\sigma_1}=y_1+y_2+y_3$, ${\sigma_2}=y_1y_2+y_2y_3+y_3y_1$ and ${\sigma_3}=y_1y_2y_3$.  
The above vanishing condition would yield a long expression, but {$\sigma_1,
\sigma_2, \sigma_3$} can be determined from the coefficient of equation
(\ref{eq:E7_cubic}) by Vieta's formulas. 
The relations between the symmetric polynomials and
the coefficients then provide
\begin{align*}
	\sigma_1 =& -\frac{3 \left(b_--b_+\right)}{2 \left(a_--a_+\right)},\\
	\sigma_2=& \frac{2 \left(a_--a_+\right) \left(c_--c_+\right)+\left(b_--b_+\right){}^2}{2 \left(a_--a_+\right){}^2},\\
	\sigma_3 =& \frac{2 \left(a_--a_+\right) \lambda _+-\left(b_--b_+\right) \left(c_--c_+\right)}{2 \left(a_--a_+\right){}^2}.
\end{align*}

After simplifications, we get a condition for fibration to contain an
$I_2$ curve:
\begin{equation}
\label{eq:T}
	 \frac{\lambda _+ \left(\left(\left(b_--b_+\right){}^2-4
           \left(a_--a_+\right) \left(c_--c_+\right)\right){}^3-432
           \left(a_--a_+\right){}^4 \lambda _+^2\right)}{16
           \left(a_--a_+\right)} =0.
\end{equation}
Now $\Delta$ is in the nominator, and $\lambda_+$ is a multiplication
factor, hence Equation~\eqref{eq:T} becomes the following:
\begin{equation*}
	-\frac{\Delta  \lambda _+}{16 \left(a_--a_+\right)}=0.
\end{equation*}
Since the pencil has three singularities, we have that $\Delta
\neq 0$, consequently $\lambda_+ =0$ concluding the proof of
Lemma~\ref{lem:equivalence}.
\end{proof}

\begin{rk}
Note that according to Proposition~\ref{prop:DoublesectionAnd2section}
the fibration has the fiber of type $III$ or $I_2+I_1$ if and only if
the pencil contains a double section which is the union of two
sections, and this last condition is easily seen to be equivalent to
$\lambda_{\pm}=0$.
\end{rk}

We need to examine the last case when $\Delta \neq 0$ and
$\lambda_{\pm} \neq 0$.  {By process of elimination} there is a
single possibility for the singular fibers: there are three $I_1$
fibers in the fibration.

In summary, so far we have identified all possible singular curves in
the pencil on $X$. We summarize the cases in Table \ref{tab:E7}.
\begin{table}[htb]
\begin{center}
\begin{tabular}{c|c|c}
 & $\lambda_+ = 0$ & $\lambda_+ \neq 0$\\ 
\hline 
$\Delta=0$ & $III$ &  $II+I_1$\\ 
\hline 
$\Delta\neq 0$ & $I_2+I_1$ & $3 I_1$ \\
\end{tabular}
\end{center}
\caption{\quad The type of singular curves in untwisted case}
\label{tab:E7}
\end{table}

By Lemma~\ref{lem:ag} the same classification applies for the fibers of
$p\circ \sigma \colon \Xtt \to \CP1$.  The fibration obtained from the
pencil has a section (actually, even two sections).  Let $\vert D\vert
_0^{sm}$ be the subset of $\vert D\vert _0$ parametrizing smooth
curves.  The relative Abel--Jacobi map gives an algebraic isomorphism
between the restriction of the fibration to $\vert D\vert _0^{sm}$ and
its relative Picard scheme.  Therefore, in order to conclude the proof
of Theorem~\ref{thm:mainE7}, one merely needs to study torsion-free
sheaves on the singular fibers of $H$.  In the cases of curves of
types $I_1 , II$ this was carried out in Proposition
\ref{prop:compactified-Picard-scheme-I1-II}.  For curves of types
$I_2$ and $III$, the analysis is carried out in
Section~\ref{sec:stability-analysis}.

\section{Stability analysis in the untwisted case}
\label{sec:stability-analysis}

 In cases \eqref{thm:mainE7+II+I1} and \eqref{thm:mainE7+I1+I1+I1} of
Theorem~\ref{thm:mainE7}
the singular fibers of the elliptic pencil 
 (except for the type $\Et7$ fiber at infinity) are integral (i.e. irreducible and reduced), 
 so the Hitchin fiber of the moduli space corresponding to the singular fibers is just the usual 
 compactified Picard scheme of degree $\delta$. 
 In the other cases however we need to determine the Hitchin fibers of $\Mod$ corresponding to the reducible 
 singular fibers of the pencil. 

\subsection{Stability analysis in the case \texorpdfstring{$\Et7 + I_2 + I_1$}{E\textseveninferior+I\texttwoinferior+I\textoneinferior}}
We use the results and notations of Subsection \ref{subsubsec:Oda-Seshadri-I2}. 
We let $b\in B$ denote the point whose preimage in the pencil is the singular fiber of type $I_2$. 
We assume that 
$$
  \E = p_* (\Ft ) 
$$
for some torsion-free sheaf $\Ft$ of $\O_{X_b}$-modules of rank $1$ and use the definitions (\ref{eq:delta-i}). 
By assumption we have 
\begin{equation}\label{eq:0=pardeg}
  0 = \pardeg (\E) = \deg (\E ) + \alpha_0^+ + \alpha_0^-.
\end{equation}
Then in view of (\ref{eq:delta=delta1+delta2+2-J}) and  (\ref{eq:delta-d}) the above formula may be rewritten as 
\begin{equation}\label{eq:pardeg=0-delta}
   0 = (\delta_+ + \alpha_0^+ ) + (\delta_- + \alpha_0^- ) - |J(\Ft )| .
\end{equation}
For any non-trivial Higgs subbundle $(\F, \t|_{\F})$ of $(\E, \theta)$ the scheme 
$$
  (\t|_{\F} - \lambda ) \subset X 
$$
is a sub-scheme of $X_b$ that is a one-to-one cover of $\CP1$. 
Clearly the same also holds for non-trivial quotient Higgs bundles 
$(\Qt, {\overline {\t}} )$. 
On the other hand, for any $i \in \{ \pm \}$ the functor $p_*$ applied to the morphism (\ref{eq:canonical-quotient}) 
gives rise to a quotient Higgs bundle $(\Qt_i, {\overline {\t}} )$. 
Again by (\ref{eq:delta-d}) the degree of this quotient is given by $\delta_i$ so its parabolic degree is 
$$
  \delta_i + \alpha_0^i . 
$$
It is easy to see that these are the only quotient Higgs bundles of $(\E, \theta)$, 
because the support of the spectral sheaf of any such quotient is a component of $X_b$, 
and there are exactly two such components. 
We infer that $(\E, \theta)$ is $\vec{\alpha}$-stable if and only if the two inequalities 
$$
  \delta_i + \alpha_0^i > 0 
$$
for $i \in \{ \pm \}$ hold. 
Taking into account the formula (\ref{eq:pardeg=0-delta}) these inequalities are also equivalent to 
\begin{equation}\label{eq:parabolic-stability-delta}
  0 < \delta_i + \alpha_0^i < |J(\Ft )|  
\end{equation}
for $i \in \{ \pm \}$. 
Let us point out that this can only have a solution if $|J(\Ft )| \in  \{ 1, 2 \}$. Now, setting 
$$
  \phi_i = 1 - \alpha_0^i ,
$$
we see that the stability condition (\ref{eq:parabolic-stability-delta}) transforms into 
(\ref{eq:stability-I2}), which is the Oda--Seshadri stability condition for the values $(\phi_+,\phi_-)$. 
(Notice however that the equality 
$$
  \phi_- + \phi_+ = 0
$$
holds if and only if 
$$
   \alpha_0^+ + \alpha_0^- = 2,
$$
which is incompatible with our assumption that $0 \leq \alpha_0^{\pm} < 1$.)

Let us explicitly write down the corresponding Hitchin fibers. 
For simplicity let us set 
$$
   \alpha^i = \alpha_0^i . 
$$
Since 
$$
  \alpha^+ + \alpha^-= - \deg (\E) 
$$
is an integer and $\alpha^+, \alpha^- \in [0,1)$, it follows that 
\begin{itemize}
 \item either we have $\deg(\E) = -1$ and
 \begin{equation}\label{eq:alphasumto1}
    \alpha^+ + \alpha^-= 1
  \end{equation}
 \item or we have $\deg(\E) = 0$ and
 \begin{equation}\label{eq:alphasumto0}
    \alpha^+ = 0 =  \alpha^-.
  \end{equation}
\end{itemize}

\subsubsection{Case of degree $-1$} 

Assume that $d = \deg(\E) = -1$. By virtue of (\ref{eq:delta-d}) this
amounts to $\delta = \deg (\Ft ) = 1$.  Let us first study the sheaves
with $|J(\Ft )| = 2$, i.e. invertible sheaves on $X_b$.  Assumptions
(\ref{eq:alphasumto1}) and $\alpha_i\in [0,1)$ imply that $\alpha_i\in
  (0,1)$, therefore, by condition
  (\ref{eq:parabolic-stability-delta}) we have either
$$
  \delta_+ = 0, \delta_- = 1
$$
or 
$$
  \delta_+ = 1, \delta_- = 0.
$$
Let us introduce the notation 
$$
  \L_i = \L(\Ft)|_{X_i}
$$
and fix one of the two conditions on the degrees spelled out above. 
Then, as $X_{\pm}$ are rational curves, the isomorphism class of $\L_{\pm}$ is completely determined. 
Moreover, according to (\ref{eq:OS-SES}), $\Ft$ is obtained by first identifying the fibers $(\L_+)_0$ and $(\L_-)_0$ 
by an isomorphism, then identifying the fibers $(\L_+)_{\infty}$ and $(\L_-)_{\infty}$ by an isomorphism. 
The possible identifications between these pairs of lines are parametrized by $\C^{\times} \times \C^{\times}$. 
Indeed, for trivializations $\sigma_i$ over open affine subsets of $X_i$, we have 
$$
  \sigma_+(0) = \lambda_0 \sigma_-(0), \quad \sigma_+(\infty ) = \lambda_{\infty} \sigma_-(\infty )
$$
for some 
$$
  (\lambda_0, \lambda_{\infty}) \in \C^{\times} \times \C^{\times} \subset \C^2. 
$$
However, we may act on this space of identifications by constant automorphisms of one of the bundles $\L_i$ (say $\L_+$) 
without changing the isomorphism class of the sheaf $\Ft$ obtained by the identifications. 
Constant automorphisms are isomorphic to $\C^{\times}$ and $t\in \C^{\times}$ obviously acts by 
$$
  t (\lambda_0, \lambda_{\infty}) = (t \lambda_0, t \lambda_{\infty}). 
$$
Therefore, we are left with a parameter space 
$$ \mbox{Pic}^{\delta_+, \delta_-} = \C^{\times} \times \C^{\times} /
\C^{\times} = \C^{\times} \subset \CP1 $$ for such invertible
sheaves. It is easy to see that these sheaves  are all
non-isomorphic.  This implies that the universal line bundle on $X_b$
of bidegree $(\delta_+, \delta_-)$ is given by
\begin{equation*}
   L^{\delta_+, \delta_-}(\cdot ) \to \mbox{Pic}^{\delta_+, \delta_-} \times X_b = \C^{\times} \times X_b . 
\end{equation*}
Now let us consider the case of sheaves $\Ft$ with $|J(\Ft )| = 1$.
These sheaves are locally free in a neighborhood of exactly one of
the two points $\{ 0, \infty \}$.  Clearly, if $\Ft$ is locally free
near $0$ and not locally free near $\infty$ then $\Ft$ cannot be
isomorphic to a sheaf $\Ft'$ that is locally free near $\infty$ and
not locally free near $0$.  Thus there exist at least $2$ points in
$$
  \cJ_{X_b}^{\delta,\phi} \setminus (\Pic^{0,1} \cup \Pic^{1,0}).
$$
Our aim is to show that there exist exactly $2$ points in this complement. 
Indeed, we first observe that if $|J(\Ft )| = 1$ then (\ref{eq:parabolic-stability-delta}) only allows for 
$$
    \delta_+ = 0 = \delta_- .
$$ As $X_{\pm}$ are rational curves, the isomorphism class of line
    bundles of degree $0$ on $X_{\pm}$ is unique, they are given by
    $\L_{\pm} = \O_{X_{\pm}}$.  Now, assume that $\Ft$ is locally free
    near $0$.  Then $\Ft$ is obtained by identifying the fibers
    $(\L_+)_0$ and $(\L_-)_0$ by a linear isomorphism.  The choices
    for such an isomorphism are parametrized by $\C^{\times}$.
    However, we again get isomorphic sheaves if we apply a constant
    automorphism to one of $\L_{\pm}$.  It follows that there exists a
    single stable sheaf $\Ft_0$ that is locally free near $0$ but not
    locally free near $\infty$.  Similarly, there exists a unique
    stable sheaf $\Ft_{\infty}$ that is locally free near $\infty$ but
    not locally free near $0$.

Finally, we show that both $\Ft_0$ and $\Ft_{\infty}$ are in the closure of both $\Pic^{0,1}$ and $\Pic^{1,0}$ in 
$\cJ_{X_b}^{\delta,\phi}$. The argument closely follows the one in the proof of Proposition 
\ref{prop:compactified-Picard-scheme-I1-II}. 
Let us for instance work in the chart $\lambda_{\infty} = 1$ of $\CP1$, 
and fix one of the two conditions on the degrees spelled out above, say $(1,0)$. 
We will consider the limit $L^{1,0}(0)$ of the line bundles $L^{1,0}(\lambda )$ as $\lambda = \lambda_0 \to 0$. 
Let us denote the two preimages of $0\in X_b$ by $0_+ \in X_+, 0_-\in X_-$ respectively. 
For $\lambda_0 = 0$ we get 
$$
  \sigma_+(0_+) = 0 \cdot \sigma_-(0_-),
$$
hence 
$$
  \Tor^{\O_{X_+, 0_+}}(\pi^* L^{1,0}(0)) \cong \C_{0_+}
$$
is generated by $\sigma_-(0_-)$, and 
$$
  \Tor^{\O_{X_+, 0_+}}(\pi^* L^{1,0}(0)) \cong 0. 
$$
At the points $\infty_{\pm}$, $L^{1,0} (0)$ is locally free. 
We infer that the line bundle $\L_+(0)$ of (\ref{eq:associated-invertible-sheaf}) 
over $X_+$ associated to $L^{1,0} (0)$ fits into the short exact sequence 
$$
  0 \to \L_+(0) \to \O_{X_+} (1) \to \C_{0_+} \to 0, 
$$ and that $\L_-(0) = \O_{X_-}$; in other words, these line bundles
  are both of degree $0$.  As we have already shown, $\Ft_{\infty}$ is
  up to isomorphism the unique sheaf of bidegree $(0,0)$ which is
  locally free near $\infty$ but not locally free near $0$. We infer
  that
$$
   L^{1,0} (0) = \Ft_{\infty}. 
$$
A similar argument for $L^{0,1}$ over the affine chart $\lambda_{\infty} = 1$ now shows that 
the limit of $L^{0,1} (\lambda )$ as $\lambda\to 0$ is a sheaf of bidegree $(-1,1)$, 
locally free near $\infty$ but not locally free near $0$. 
Let us denote by $X_0$ the partial normalization of $X_b$ at the point $0\in X_b$. 
By the uniqueness of $\Ft_{\infty}$ we see that 
$$
  L^{0,1} (0 ) \cong \Ft_{\infty} \otimes_{\O_{X_0}} \O_{X_0} (- \{ 0_+ \} + \{ 0_- \}). 
$$
However, as the arithmetic genus of $X_0$ is $0$, the latter sheaf is trivial. 
Hence, $L^{0,1} (0 )$ is also isomorphic to  $\Ft_{\infty}$. 

The case of $\Ft_0$ can then be obtained by exchanging the roles of $0$ and $\infty$. 

We infer from the discussion above that the moduli space has the structure of an elliptic fibration near the 
point $b \in B$ corresponding to the singular fiber. 
Furthermore, it is easy to check (using the fact that the parabolic weights are non-zero) that in this case semi-stability is equivalent to stability. 
Therefore, by \cite{Biq-Boa} the moduli space is complete. 
It then follows that the fiber of the Hitchin map $H$ over $b$ is either a smooth elliptic curve or one of the singular fibers on Kodaira's list. 
As we have shown above, this fiber is homeomorphic to two copies of $\CP1$ attached at two different points. 
In particular, the fiber is singular, and as the only fiber on Kodaira's list homeomorphic to two copies of $\CP1$ attached at two points is $I_2$,  
we conclude that $H^{-1}(b)$ is a type $I_2$ curve.

\subsubsection{Case of degree $0$}

The analysis is similar to the case of degree $-1$, hence we only give the outline. 
In the case $|J(\Ft )| =2$ of invertible sheaves, we obtain 
$$
   \delta_+ = 1 = \delta_- ,
$$
and if $|J(\Ft )| =1$ then no $(\delta_+ ,\delta_- )$ solves (\ref{eq:parabolic-stability-delta}). 
We infer that stable sheaves are parametrized by $\C^{\times}$. 
Let us now consider strictly semi-stable sheaves. 
Then, the solutions in the case $|J(\Ft )| =2$ are 
$$
   \delta_+ \in \{ 0,1,2 \},
$$
with $\delta_- = 2 -\delta_+$. The parameter space consists of $3$ copies of $\C^{\times}$. 
The solutions $(\delta_+ ,\delta_- )$ with $|J(\Ft )| =1$ are 
$$
  (0,1), \quad (1,0), 
$$
each being parametrized by a point. 
The point corresponding to bidegree $(0,1)$ is both a limit point of the $\C^{\times}$ 
parametrizing invertible sheaves of bidegree $(0,2)$ and the one parametrizing 
invertible sheaves of bidegree $(1,1)$. 
Similarly, the point corresponding to bidegree $(1,0)$ is both a limit point of the $\C^{\times}$ 
parametrizing invertible sheaves of bidegree $(2,0)$ and the one parametrizing 
invertible sheaves of bidegree $(1,1)$. 
All the semi-stable solutions are parametrized by a copy of $\CP1$ with two copies of $\C$ attached 
to it at two different points of $\CP1$. 
In contrast with the case of degree $-1$, this time there do exist strictly semi-stable Higgs bundles, 
and in addition the parabolic weights are not all distinct. 
Hence, we cannot use a completeness argument to determine the algebraic type of the singular fiber. 

\subsection{Stability analysis in the case \texorpdfstring{$\Et7 + III$}{E\textseveninferior+III}}
\label{subsec:III}
We now let $b\in B$ be the point whose preimage in the pencil is the singular fiber of type $III$. 
We again have (\ref{eq:0=pardeg}). 

We assume that 
$$
  \E = p_* (\Ft ) 
$$
and use the definitions of (\ref{eq:delta-i}). 
The curve $X_b$ has a single singular point $x$ which is a tacnode (an $A_3$-singularity). 
It is known that there exists a fractional ideal 
$$
  \O_{X_b,x} \subseteq I \subseteq \O_{\tilde{X}_b,x}
$$
of $\O_{X_b,x}$ such that 
$$
  \Ft_x \cong I. 
$$
The length of $\Ft$ at $x$ is by definition 
$$
  l(\Ft) = \dim_{\C}(I /\O_{X_b,x}), 
$$
and we have the inequalities  
$$
  0 \leq l(\Ft) \leq \dim_{\C}(\O_{\tilde{X}_b,x}/\O_{X_b,x}) = 2.
$$
Now there exists a short exact sequence of sheaves 
$$
  0 \to \Ft \to \L(\Ft)|_{X_+} \oplus \L(\Ft)|_{X_-} \to \C_x^{2-l(\Ft)}\to 0, 
$$
hence 
$$
  \chi (\Ft) + 2 - l(\Ft) = \chi (\L(\Ft)|_{X_+}) + \chi (\L(\Ft)|_{X_-}). 
$$
Applying this to $\O_{X_b}$ in the place of $\Ft$ we get 
$$
  \chi (\O_{X_b}) + 2 = \chi (\O_{X_+}) + \chi (\O_{X_-}).  
$$
Subtracting the second formula from the first we infer 
$$
  \delta - l(\Ft) = \delta_+ + \delta_-,
$$ with $\delta,\delta_+, \delta_-$ the degrees of $\Ft,
  \L(\Ft)|_{X_+}$ and $ \L(\Ft)|_{X_-}$, respectively.  Using this
  formula and (\ref{eq:delta-d}) we can rewrite (\ref{eq:0=pardeg}) as
\begin{equation}\label{eq:0=pardeg-delta}
   0 = \delta_+ + \delta_- + l(\Ft) - 2 + \alpha^+ + \alpha^-.
\end{equation}
The canonical morphisms (\ref{eq:canonical-quotient}) give quotient irregular parabolic Higgs bundles 
$\E_{i}$ of $\E$ of rank $1$ and degree 
$$
  d_{i} = \delta_{i}
$$
for $i \in \{ \pm \}$. 
Furthermore, these are again the only non-trivial Higgs quotient bundles of $\E$. 
The parabolic weight associated to $\E_i$ is $\alpha^i$, 
so the parabolic degree of $\E_{i}$ is 
$$
  \pardeg (\E_i) =  \delta_{i} + \alpha^i . 
$$
It follows that the parabolic stability of $(\E,\theta)$ is equivalent to the inequalities 
$$
  0 < \delta_{i} + \alpha^i
$$ for $i \in \{ \pm \}$. Taking (\ref{eq:0=pardeg-delta}) into
  account, this is equivalent to
\begin{equation}\label{eq:stability-III}
   \delta_{+} + \alpha^+ + 2 l(\Ft) - 2 < \delta_{-} + \alpha^- + l(\Ft) < \delta_{+} + \alpha^+ + 2. 
\end{equation}
This time this inequality immediately implies that there exist no stable Higgs bundles with spectral sheaf $\Ft$ of length $2$. 

We again set 
$$
   \alpha^i = \alpha_0^i 
$$
and we need to distinguish two cases: 
\begin{itemize}
 \item either we have $\deg(\E) = -1$ and
 \begin{equation}\label{eq:alphasumto1new}
    \alpha^+ + \alpha^-= 1
  \end{equation}
 \item or we have $\deg(\E) = 0$ and
 \begin{equation}\label{eq:alphasumto0new}
    \alpha^+ = 0 =  \alpha^-.
  \end{equation}
\end{itemize}

\subsubsection{Case of degree $-1$}

Let us first treat the case of (\ref{eq:alphasumto1new}). 
Assume first $l(\Ft) = 0$, i.e. $\Ft$ is an invertible sheaf on $X_b$. 
Then, independently of the values of $\alpha^{\pm}$ satisfying (\ref{eq:alphasumto1new}), condition (\ref{eq:stability-III}) 
implies either 
$$
  \delta_+ = 0, \delta_- = 1 
$$
or 
$$
  \delta_+ = 1, \delta_- = 0. 
$$
Therefore, such sheaves are parametrized by $\C\coprod \C$, as it readily follows from the long exact sequence associated to 
$$
  0 \to \O_{X_b} \to \O_{\tilde{X}_b} \to \O_{\tilde{X}_b,x}/\O_{X_b,x} \to 0 
$$
using the fact that $\tilde{X}_b$ has two connected components. 

If, on the other hand, we have $l(\Ft) = 1$ then the only solution is 
$$
  \delta_+ = 0 = \delta_- ,
$$
again independently of the values of $\alpha^{\pm}$. 
This latter sheaf is in the closure of both components $\C$ parametrizing invertible sheaves. 
We infer that up to homeomorphism, the Hitchin fiber over the point $b$ is parametrized by two copies of 
$\CP1$ attached at one point. 
As the generic fiber of the Hitchin-fibration is an elliptic curve and the moduli space is complete by \cite{Biq-Boa}, 
the fiber over $b$ must be again one of the fibers of Kodaira's list. 
However, the only singular fiber on the list that is homeomorphic to two copies of $\CP1$ 
glued at one point is the fiber of type $III$. 
Therefore, the Hitchin fiber $H^{-1}(b)$ is a singular curve of type $III$. 

\subsubsection{Case of degree $0$}

Let us now study the case of (\ref{eq:alphasumto0new}): in this case,
by virtue of (\ref{eq:0=pardeg-delta}) we have
$$
  2 - l(\Ft) = \delta_+ + \delta_-.
$$ If $l(\Ft) = 0$ then we readily see that the only solution to
  equation (\ref{eq:stability-III}) is
$$
  \delta_+ = 1 = \delta_- ,
$$
and just as above one can show that such sheaves are parametrized by $\C$.

On the other hand, if $l(\Ft) = 1$ then (\ref{eq:stability-III}) has no solutions; however, 
if we relax the inequalities in (\ref{eq:stability-III}) to not necessarily strict ones, 
then there exist two solutions: 
\begin{equation}\label{eq:delta+0}
   \delta_+ = 0, \delta_- = 1 
\end{equation}
and
\begin{equation}\label{eq:delta-0}
\delta_+ = 1, \delta_- = 0.
\end{equation}
The sheaves with these properties are parametrized by one point in each of the two cases. 

Let us first analyze the case (\ref{eq:delta+0}): in this case, the destabilizing quotient of $(\E, \theta)$ 
is $\E_+$: indeed, we have 
$$
  0 = \delta_+ = \deg (\E_+ ) = \pardeg (\E_+ ) =  \deg (\E ) = \pardeg (\E ),
$$
since the parabolic weights vanish. The destabilizing Higgs subbundle of $\E$ is 
$$
  \ker (\E \to \E_+ ), 
$$
which is a lower elementary transformation of $\E_-$: 
$$
  \ker (\E \to \E_+ ) = \E_- (- \{ t \}), 
$$
where $t \in \CP1$ is the image under $p$ of the singular point of $X_b$. 
Indeed, we have 
\begin{equation}\label{eq:lower-elem-tr-degree}
   \deg (\E_- (- \{ t \})) = \deg (\E_- ) -1 = 0,
\end{equation}
and $\E_- (- \{ t \})$ is preserved by $\theta$ simply because the image by $\theta$ of 
vanishing sections of $\E$ at $t$ also vanish at $t$, in particular, they belong to $\E_-$. 
The Jordan--H\"older filtration of $(\E , \theta)$ is therefore given by 
$$
   \E_- (- \{ t \}) \subset \E ,
$$
with associated graded 
$$
  \E_- (- \{ t \}) \oplus  \E_+
$$
endowed with the action 
\begin{equation}\label{eq:JH-graded-Higgs-field}
   \begin{pmatrix}
   \theta_- & 0 \\
   0 & \theta_+
  \end{pmatrix},
\end{equation}
where $\theta_{\pm}$ are the morphisms induced by $\theta$ on the two direct summands. 
According to (\ref{eq:lower-elem-tr-degree}), the vector bundle underlying this graded Higgs bundle 
is isomorphic to the trivial bundle of rank $2$ over $\CP1$. 
Moreover, the action of $\theta_{\pm}$ in the above matrix clearly has spectral curve $X_{\pm}$ respectively. 

The case of (\ref{eq:delta+0}) can be treated in a very similar
manner, except that one needs to exchange the roles of $\E_+$ and
$\E_-$. It then follows that the destabilizing Higgs subbundle of $\E$
is
$$
  \E_+ (- \{ t \}), 
$$
and that the graded Higgs bundle associated to the Jordan--H\"older filtration is 
$$
  \E_-  \oplus  \E_+ (- \{ t \}), 
$$
the trivial vector bundle of rank $2$ over $\CP1$, 
with Higgs field given by the formula (\ref{eq:JH-graded-Higgs-field}). 

The upshot is that in both cases (\ref{eq:delta+0}) and (\ref{eq:delta-0}), 
the associated graded Higgs bundles for the Jordan--H\"older filtration 
have isomorphic underlying vector bundles, and the Higgs-field splits as a direct sum. 
Moreover, the spectral curves of $\theta_+$ are equal in both cases, and the same 
holds for $\theta_-$. We infer that the associated graded Higgs bundles of the Higgs 
bundles coming from spectral sheaves satisfying (\ref{eq:delta+0}) and (\ref{eq:delta-0}) 
are isomorphic. 
Said differently, the Higgs bundles associated to (\ref{eq:delta+0}) and (\ref{eq:delta-0}) 
are $S$-equivalent, therefore they are represented by the same point in $\Mod$. 

To sum up, in the degree $0$ case the Hitchin fiber over the point $b$
is homeomorphic to the compactification of $\C$ (corresponding to
invertible sheaves) by a unique point (corresponding to sheaves of
length $1$).  However, the parabolic weights are equal and there exist
strictly semi-stable Higgs bundles, so we cannot use a completeness
argument to determine algebraically the special fiber of the Hitchin
map.

\subsection{The proof of Theorem~\ref{thm:mainE7}}
Now we are in a position of proving our first main result.
\begin{proof}[Proof of Theorem \ref{thm:mainE7}]
The polar part of an irregular Higgs bundle depends on the
  parameters listed in (\ref{eq:untwisted-parameters}). The
  case-analysis in Subsection~\ref{subsec:E7_local} describes all
  possible singular fibers in the Hirzebruch surface $X$. The blow-up
  procedure in Subsection~\ref{subsec:RefBNR} and Lemma~\ref{lem:ag}
  provide a biholomorphism between $X$ and the constructed rational
  surface $\Xtt$. Proposition~\ref{prop:E7} guarantees the existence
  of an elliptic fibration on $\Xtt$ with an $\Et7$ singular fiber and
  describes the moduli space $\Mod^{ss}$.  Finally,
  Proposition~\ref{prop:compactified-Picard-scheme-I1-II} and the
  analysis in Section~\ref{sec:stability-analysis} identify the
  Hitchin fibers in $\Mod^{s}$ and hence verify
  Theorem~\ref{thm:mainE7}.
\end{proof}

\section{The twisted case}\label{sec:E8}

In this section we determine a certain blow-up $\Xtt$ of $\Xt$
depending on the parameters appearing in (\ref{eq:twisted-parameters})
with the property that certain sheaves on $\Xtt$ are in one-to-one
correspondence with Higgs bundles of the local form
(\ref{eq:local-form-E8}).  We need two preliminary lemmas.

\begin{lem}\label{lem:an-bn}
Let $\t$ be a Higgs field of the local form (\ref{eq:local-form-E8}). 
Let us denote by $\zeta \d z$ the eigenvalues of $\t$; $\zeta$ is a ramified bi-valued meromorphic function of $z_1$. 
\begin{enumerate}
 \item Assume that $b_{-7} \neq 0$. Then, for $-8 \leq n \leq -3$ the coefficients of the Puiseux expansion 
\begin{equation}\label{eq:Puiseux-E8}
  \zeta = \sum_{n=-8}^{\infty} a_n z_1^{\frac n2}. 
\end{equation}
admit expressions 
$$
  a_n = a_n(b_{-8}, \sqrt{b_{-7}}, b_{-6},\ldots , b_n) \in \C [ b_{-8}, b_{-7}^{\pm 1/2}, b_{-6},\ldots , b_n ]
$$
in the parameters $b_n$, and $a_{-7}\neq 0$. \label{lem:an-bn-1}
\item Vice versa, if $\t$ is of the local form (\ref{eq:local-form-E8}) and $a_{-7}\neq 0$ then the parameters $b_{-8}, \ldots , b_{-3}$ admit 
polynomial expressions 
$$
  b_n = b_n(a_{-8},\ldots , a_n) \in \C [a_{-8},\ldots , a_n]
$$
in function of the Puiseux coefficients of $\zeta$, and $b_{-7} \neq 0$.\label{lem:an-bn-2}
\end{enumerate}
\end{lem}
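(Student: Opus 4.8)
The plan is to reduce the whole statement to the characteristic polynomial of the matrix $A(z)$ of (\ref{eq:local-form-E8}), where $\theta = A(z)\otimes\d z$ and $A(z) = A_{-4}z^{-4} + A_{-3}z^{-3} + A_{-2}z^{-2} + A_{-1}z^{-1} + O(1)$ (writing $z = z_1$). By definition $\zeta$ is an eigenvalue of $A(z)$, so
\[
  \zeta^2 - (\tr A)\,\zeta + \det A = 0, \qquad \zeta = \tfrac12\Bigl(\tr A \pm \sqrt{(\tr A)^2 - 4\det A}\Bigr).
\]
The first step is to record the two symmetric functions as Laurent series in $z$ directly from (\ref{eq:local-form-E8}). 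Since the $(1,1)$- and $(1,2)$-entries of $A$ are $b_{-8}z^{-4}+O(1)$ and $z^{-4}+O(1)$, while the second row is $b_{-7}z^{-3}+b_{-5}z^{-2}+b_{-3}z^{-1}+O(1)$ and $b_{-8}z^{-4}+b_{-6}z^{-3}+b_{-4}z^{-2}+O(1)$, one obtains
\[
  \tr A = 2b_{-8}z^{-4} + b_{-6}z^{-3} + b_{-4}z^{-2} + O(z^{-1}),
\]
the coefficient of $z^{-1}$ being $\tr A_{-1} = b_{-2} = 0$ by the residue theorem, and
\[
  \det A = b_{-8}^2 z^{-8} + (b_{-8}b_{-6}-b_{-7})z^{-7} + (b_{-8}b_{-4}-b_{-5})z^{-6} - b_{-3}z^{-5} + \cdots.
\]
All these coefficients are polynomials in the parameters $b_n$, and a direct bookkeeping of which index first enters at which order will yield the triangular dependence asserted in the lemma.

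The key computation is the discriminant. A direct expansion produces the cancellation of the $z^{-8}$-terms and
\[
  (\tr A)^2 - 4\det A = 4b_{-7}\,z^{-7} + (b_{-6}^2+4b_{-5})\,z^{-6} + (2b_{-6}b_{-4}+4b_{-3})\,z^{-5} + \cdots,
\]
whose three leading coefficients are precisely the quantities out of which the discriminant $D$ of Theorem~\ref{thm:mainE8} is built. Under the hypothesis $b_{-7}\neq 0$ the lowest-order term is $4b_{-7}z^{-7}$, so the square root is $z^{-7/2}\sqrt{4b_{-7}}\,(1+O(z))^{1/2}$: this is a genuine half-integer series, confirming the ramified shape (\ref{eq:Puiseux-E8}), and its leading term already gives $a_{-7} = \pm\sqrt{b_{-7}}\neq 0$. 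For part~(\ref{lem:an-bn-1}) I would then expand $(1+O(z))^{1/2}$ by the binomial series and collect terms: the even-index coefficients come from $\tfrac12\tr A$, giving $a_{-8}=b_{-8}$, $a_{-6}=\tfrac12 b_{-6}$, $a_{-4}=\tfrac12 b_{-4}$, whereas the odd-index coefficients $a_{-7},a_{-5},a_{-3}$ come from $\tfrac12\sqrt{(\tr A)^2-4\det A}$ and acquire the expected negative powers of $\sqrt{b_{-7}}$. The grading noted above then shows that each $a_n$ lies in $\C[b_{-8},b_{-7}^{\pm1/2},b_{-6},\ldots,b_n]$.

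For the converse, part~(\ref{lem:an-bn-2}), the cleanest route is not to invert these formulas one at a time but to use that the two branches of $\zeta$ are interchanged by $z^{1/2}\mapsto -z^{1/2}$. Splitting the Puiseux series into its integer-power part $E$ and its half-integer-power part $O$, the two eigenvalues are $E\pm O$, so that $\tr A = 2E$ and $\det A = E^2-O^2$; both are therefore polynomials in the coefficients $a_n$. Matching their $z^{-m}$-coefficients against the expressions in the $b_n$ above recovers each $b_n$ as a polynomial in the $a_k$, namely $b_{-8}=a_{-8}$, $b_{-7}=a_{-7}^2$, $b_{-6}=2a_{-6}$, $b_{-5}=2a_{-5}a_{-7}-a_{-6}^2$, $b_{-4}=2a_{-4}$, and, from the $z^{-5}$-coefficient of $E^2-O^2$, $b_{-3}=2a_{-7}a_{-3}+a_{-5}^2-2a_{-6}a_{-4}$; the identity $b_{-7}=a_{-7}^2$ gives $b_{-7}\neq 0$ at once. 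I expect the one point demanding genuine care to be this last coefficient: the $z^{-5}$-coefficient of $E^2$ also carries a term $2a_{-8}a_{-2}$, so the assertion that $b_{-3}$ depends only on $a_{-8},\ldots,a_{-3}$, and not on the higher coefficient $a_{-2}$, holds precisely because $a_{-2}=\tfrac12(\tr A)|_{z^{-1}}=\tfrac12 b_{-2}=0$ by the residue theorem. Verifying that this single vanishing is exactly what enforces the triangular dependence is the crux of both directions, and once it is in place both parts of the lemma follow from the displayed matchings.
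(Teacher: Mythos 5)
Your proposal is correct and is essentially the paper's own (very terse) proof: the paper simply records the outcome of applying the quadratic formula to the characteristic polynomial, and your trace/determinant/discriminant expansion reproduces its formulas exactly, namely $a_{-8}=b_{-8}$, $a_{-6}=\tfrac12 b_{-6}$, $a_{-4}=\tfrac12 b_{-4}$, $a_{-7}=\sqrt{b_{-7}}$, $a_{-5}=\tfrac1{8\sqrt{b_{-7}}}(b_{-6}^2+4b_{-5})$, $a_{-3}=\tfrac1{8\sqrt{b_{-7}}}(2b_{-4}b_{-6}+4b_{-3})-\tfrac1{128\,b_{-7}\sqrt{b_{-7}}}(b_{-6}^2+4b_{-5})^2$, together with the inverses $b_{-7}=a_{-7}^2$, $b_{-5}=2a_{-5}a_{-7}-a_{-6}^2$, $b_{-3}=2a_{-3}a_{-7}-2a_{-4}a_{-6}+a_{-5}^2$. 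Your even/odd splitting $\zeta=E\pm O$ for the converse is only a mild organizational variant of the paper's direct inversion, and your explicit observation that $a_{-2}=\tfrac12 b_{-2}=0$ by the residue theorem is exactly the point the paper leaves implicit when asserting that $b_{-3}$ depends only on $a_{-8},\ldots,a_{-3}$.
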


\begin{proof}
 This is a straightforward computation. Specifically, we have 
 $$
  a_{-8} = b_{-8}, 
 $$
 for  $n\in \{ -6, -4 \}$ we have 
 $$
  a_n = \frac{b_n}2,
 $$
 and the coefficients with odd indices are given by 
 \begin{align*}
  a_{-7} & = \sqrt{b_{-7}}, \\
  a_{-5} & = \frac 1{8\sqrt{b_{-7}}} (b_{-6}^2 + 4b_{-5}), \\
  a_{-3} & = \frac 1{8\sqrt{b_{-7}}} (2b_{-4}b_{-6} + 4b_{-3}) - \frac 1{128b_{-7}\sqrt{b_{-7}}} (b_{-6}^2 + 4b_{-5})^2,
 \end{align*}
 (the square root of $b_{-7}$ depending on the choice of square root of $z$ in the Puiseux series).
  The inverse transformations are given by 
 \begin{align*}
    b_{-7} & = a_{-7}^2 \\
    b_{-5} & = 2 a_{-5}a_{-7} -  a_{-6}^2 \\
    b_{-3} & = 2 a_{-3}a_{-7} - 2 a_{-4}a_{-6} + a_{-5}^2.
 \end{align*}
\end{proof}

In the lemma below we follow the conventions and notations introduced in Sections \ref{sec:prep-mat} and \ref{sec:E7}. 
In particular, in view of the definition of the affine coordinate system $(z_1,w_1)$ near $p^{-1}(q) \setminus C^{\infty}$ 
and Lemma \ref{lem:an-bn}, the equation of the spectral curve of a Higgs field of the local form (\ref{eq:local-form-E8}) reads as 
$$
  w_1 = \sum_{n=0}^{\infty} a_{n-8} z_1^{\frac n2}. 
$$

\begin{lem}\label{lem:dn-an}
Assume the above Puiseux expansion holds. 
\begin{enumerate}
 \item If $a_{-7} \neq 0$, then for $2\leq n \leq 6$ there exist polynomials 
$$
  d_n = d_n(a_{-7},\ldots , a_{n-9} ) \in \C[a_{-7}^{\pm 1}, a_{-6},\ldots , a_{n-9}]
$$
such that we have the Taylor series \label{lem:dn-an-1}
\begin{equation}\label{eq:Taylor-E8}
   z_1 = d_2 (w_1- a_{-8})^2 + \cdots + d_6 (w_1- a_{-8})^6 + O((w_1- a_{-8})^7). 
\end{equation}
Moreover, $d_2 \neq 0$. 
\item Conversely, the value $a_{n-9}$ is a polynomial in $d_2^{\pm 1/2}, d_3,\ldots ,d_n$, and $a_{-7} \neq 0$. \label{lem:dn-an-2}
\end{enumerate}
\end{lem}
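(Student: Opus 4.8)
The plan is to treat both directions as formal power-series inversion on the smooth local model of the spectral curve, using $u = z_1^{1/2}$ as a uniformizer so that all the half-integer Puiseux powers become honest integer powers. Near the ramification point the normalization of the spectral curve is a disk with coordinate $u$, on which $z_1 = u^2$ and, by hypothesis, $t := z_2 - a_{-8} = \sum_{n\geq 1} a_{n-8}\, u^n$. Thus
$$ t = a_{-7} u + a_{-6} u^2 + a_{-5} u^3 + \cdots, $$
a convergent power series in $u$ whose linear coefficient is $a_{-7}$. The key observation, valid once $a_{-7}\neq 0$, is that $t$ is then itself a local coordinate, so that $z_1$ is a genuine (single-valued, convergent) power series in $t$.

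For part (\ref{lem:dn-an-1}), since $a_{-7}\neq 0$ the formal inverse function theorem produces a unique series $u = \sum_{j\geq 1} c_j\, t^j$ with $c_1 = a_{-7}^{-1}$, and the standard inversion recursion gives $c_j \in \C[a_{-7}^{\pm 1}, a_{-6}, \dots, a_{j-8}]$. Squaring yields $z_1 = u^2 = \sum_{k\geq 2} d_k\, t^k$ with $d_k = \sum_{i+j=k} c_i c_j$; in particular $d_2 = c_1^2 = a_{-7}^{-2}\neq 0$, which gives the claimed non-vanishing, and the Taylor series (\ref{eq:Taylor-E8}) by truncation. Because $d_k$ involves only $c_1,\dots,c_{k-1}$, the deepest coefficient entering is $c_{k-1}$, whence $d_n \in \C[a_{-7}^{\pm 1}, a_{-6}, \dots, a_{n-9}]$, exactly the asserted dependence.

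For the converse (\ref{lem:dn-an-2}) I would run the same computation backwards. Starting from $z_1 = \sum_{k\geq 2} d_k\, t^k$ with $d_2\neq 0$, I factor out $d_2 t^2$ and extract a square root,
$$ u = z_1^{1/2} = d_2^{1/2}\, t \left(1 + \tfrac{d_3}{d_2}\, t + \cdots\right)^{1/2} = \sum_{j\geq 1} c_j\, t^j, $$
obtaining a series with $c_1 = d_2^{1/2}$ and $c_j \in \C[d_2^{\pm 1/2}, d_3, \dots, d_{j+1}]$; the appearance of $d_2^{\pm 1/2}$ records the two choices of the ramified square root of $z_1$. Since $c_1 = d_2^{1/2}\neq 0$, inverting this series recovers $t = \sum_n a_{n-8}\, u^n$, and the coefficient $a_{n-9}$ of $u^{n-1}$ is determined by $c_1,\dots,c_{n-1}$, hence lies in $\C[d_2^{\pm 1/2}, d_3, \dots, d_n]$. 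Finally $a_{-7}$ is the linear coefficient of the inverse series, namely $c_1^{-1} = d_2^{-1/2}\neq 0$, as required.

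The computation itself is entirely mechanical, and I expect the only point demanding care to be the \emph{index bookkeeping}: confirming that the inversion and squaring recursions do not drag in coefficients beyond $a_{n-9}$ (respectively $d_n$). Concretely, this amounts to checking that matching the coefficient of $t^k$ in the squaring identity, or of $u^{n-1}$ in the inversion, introduces the new variable $d_k$ (respectively $a_{n-9}$) \emph{linearly}, so that one may solve for it without appealing to higher-order data. This is modest, but it is the one non-routine part of the argument; everything else follows from the formal inverse function theorem and the convergence of the Puiseux expansion.
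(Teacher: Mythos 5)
Your proposal is correct and follows essentially the same route as the paper: formally inverting the Puiseux series $z_2 - a_{-8} = a_{-7}z_1^{1/2} + O(z_1)$ in the square-root variable and then squaring, with $d_2 = a_{-7}^{-2} \neq 0$. The only difference is presentational — the paper simply records the explicit formulas for $d_2,\ldots,d_6$ and dismisses the converse as immediate, whereas you make the index bookkeeping ($d_n$ depending only on $a_{-7}^{\pm 1},\ldots,a_{n-9}$, and $a_{n-9}$ on $d_2^{\pm 1/2},\ldots,d_n$) explicit via the inversion recursion, which is a sound and slightly more careful write-up of the same argument.
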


\begin{proof}
 By assumption we have 
 $$
    \frac{w_1- a_{-8}}{a_{-7}} = \sum_{n=1}^{\infty} \frac{a_{n-8}}{a_{-7}} z_1^{\frac n2} = z_1^{\frac 12} + O(z_1). 
 $$
 By formally inverting this series and then squaring the result we obtain the first claim. 
 In concrete terms we find  
\begin{align*}
  d_1 & = 0 \\
  d_2 & = \frac 1{a_{-7}^2} \\
  d_3 & = -2 \frac{a_{-6}}{a_{-7}^4} \\
  d_4 & = \frac{5a_{-6}^2 - 2 a_{-5}a_{-7}}{a_{-7}^6} \\
  d_5 & = \frac{-14a_{-6}^3 + 12 a_{-5}a_{-6}a_{-7} - 2 a_{-4}a_{-7}^2 }{a_{-7}^8} \\
	d_6 & = \frac{42 a_{-6}^4-56 a_{-7} a_{-5} a_{-6}^2+14 a_{-7}^2 a_{-4} a_{-6}+7 a_{-7}^2 a_{-5}^2-2 a_{-7}^3 a_{-3}}{a_{-7}^{10}}.
\end{align*}
 The converse statement follows directly. 
\end{proof}

\begin{lem}\label{lem:dn-bn}
Assume that Lemmas~\ref{lem:an-bn} and \ref{lem:dn-an} hold.
\begin{enumerate}
 \item If $b_{-7} \neq 0$, then for $2\leq n \leq 6$ there exist polynomials 
$$
  d_n = d_n(b_{-7},\ldots , b_{n-9} ) \in \C[b_{-7}^{\pm 1}, b_{-6},\ldots , b_{n-9}]
$$
such that we have the Taylor series
\begin{equation*}
   z_1 = d_2 (w_1- b_{-8})^2 + \cdots + d_6 (w_1- b_{-8})^6 + O((w_1- b_{-8})^7). 
\end{equation*}
Moreover, $d_2 \neq 0$. 
\item Conversely, the value $b_{n-9}$ is a polynomial in $d_2^{\pm 1}, d_3,\ldots ,d_n$, and $b_{-7} \neq 0$.
\end{enumerate}
\end{lem}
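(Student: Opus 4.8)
The plan is to obtain Lemma~\ref{lem:dn-bn} simply by composing Lemmas~\ref{lem:an-bn} and~\ref{lem:dn-an}, the only genuine issue being to check that the square roots of $b_{-7}$ introduced by Lemma~\ref{lem:an-bn} cancel out. First I would note that the two Taylor expansions are centred at the same point: since $a_{-8} = b_{-8}$ by Lemma~\ref{lem:an-bn}, the variable $z_2 - a_{-8}$ appearing in \eqref{eq:Taylor-E8} equals $z_2 - b_{-8}$, so the expansion of Lemma~\ref{lem:dn-an} is literally the one asserted here. For part~(1), assuming $b_{-7} \neq 0$, part~(\ref{lem:an-bn-1}) of Lemma~\ref{lem:an-bn} gives $a_{-7} = \sqrt{b_{-7}} \neq 0$, so part~(\ref{lem:dn-an-1}) of Lemma~\ref{lem:dn-an} applies and expresses each $d_n$ (for $2 \le n \le 6$) as a polynomial in $a_{-7}^{\pm 1}, a_{-6}, \dots, a_{n-9}$. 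Substituting the formulas $a_m = a_m(b_{-8}, \sqrt{b_{-7}}, b_{-6}, \dots, b_m)$, and observing that only the indices $-7 \le m \le n-9$ intervene, turns $d_n$ into an expression in $\sqrt{b_{-7}}^{\,\pm 1}$ and $b_{-6}, \dots, b_{n-9}$; moreover $d_2 = a_{-7}^{-2} = b_{-7}^{-1} \neq 0$, which already settles the last claim of~(1).

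The step I expect to carry the real content is showing that no odd power of $\sqrt{b_{-7}}$ survives, so that in fact $d_n \in \C[b_{-7}^{\pm 1}, b_{-6}, \dots, b_{n-9}]$. I would argue this by a parity (deck-transformation) symmetry rather than by grinding through the substitution. Writing $u = z_1^{1/2}$, the defining relation is $z_2 - b_{-8} = \sum_{n \ge 1} a_{n-8} u^n$, and inspection of the formulas of Lemma~\ref{lem:an-bn} shows that replacing $\sqrt{b_{-7}}$ by $-\sqrt{b_{-7}}$ sends $a_m \mapsto (-1)^m a_m$ for every relevant $m$ (the even-index coefficients $b_{-8}, b_{-6}/2, b_{-4}/2$ being insensitive to the choice of root, while the odd-index ones $a_{-7}, a_{-5}, a_{-3}$ carry odd powers of $\sqrt{b_{-7}}^{\,\pm 1}$). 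Hence this sign change transforms the series into the same series evaluated at $-u$; equivalently, it is the deck transformation $u \mapsto -u$ of the double cover $p$. Inverting, $u$ becomes an odd function of $\sqrt{b_{-7}}$, so $z_1 = u^2$ — and therefore each Taylor coefficient $d_n$ of $z_1$ in $w = z_2 - b_{-8}$ — is invariant under $\sqrt{b_{-7}} \mapsto -\sqrt{b_{-7}}$. An expression in $\sqrt{b_{-7}}^{\,\pm 1}$ and the $b_m$ that is invariant under this involution lies in $\C[b_{-7}^{\pm 1}, b_{-6}, \dots, b_{n-9}]$, which is the assertion of~(1).

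For the converse part~(2) I would run the same composition backwards. Part~(\ref{lem:dn-an-2}) of Lemma~\ref{lem:dn-an} expresses $a_{n-9}$ as a polynomial in $d_2^{\pm 1/2}, d_3, \dots, d_n$ (with $a_{-7} \neq 0$), and part~(\ref{lem:an-bn-2}) of Lemma~\ref{lem:an-bn} then expresses $b_{n-9}$ polynomially in the $a_m$; composing yields $b_{n-9}$ as a polynomial in $d_2^{\pm 1/2}, d_3, \dots, d_n$. The two values of the square root $d_2^{\pm 1/2} = \pm a_{-7}^{-1}$ correspond to the two branches of $z_1^{1/2}$, that is, to the same deck transformation as above, under which the intrinsic parameters $b_m$ must be invariant; concretely, the inverse formulas $b_{-7} = a_{-7}^2$, $b_{-5} = 2a_{-5}a_{-7} - a_{-6}^2$ and $b_{-3} = 2a_{-3}a_{-7} - 2a_{-4}a_{-6} + a_{-5}^2$ are each even in $a_{-7}$, so pairing the odd-index coefficients always produces integer powers of $d_2$. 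Hence the dependence on $d_2^{1/2}$ is even and $b_{n-9} \in \C[d_2^{\pm 1}, d_3, \dots, d_n]$, while $b_{-7} = a_{-7}^2 = d_2^{-1} \neq 0$ because $d_2 \neq 0$. Throughout, the sole obstacle is precisely this cancellation of square roots, which the parity argument handles uniformly in place of any case-by-case computation.
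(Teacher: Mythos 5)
Your proposal is correct and follows the same route as the paper, whose entire proof is the single sentence that the lemma ``directly follows from the previous two lemmas,'' i.e.\ exactly the composition of Lemma~\ref{lem:an-bn} with Lemma~\ref{lem:dn-an} that you carry out. Your parity (deck-transformation) argument showing that only integer powers of $b_{-7}$ and of $d_2$ survive makes explicit the cancellation of $\sqrt{b_{-7}}$ and $d_2^{1/2}$ that the paper leaves implicit, and it is a valid justification of that step.
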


\begin{proof}
The lemma directly follows from the previous two lemmas.
\end{proof}

We now proceed to construct the surface $\Xtt$ with a birational morphism to $\Xt$ whose geometry governs $\Mod$. 
The idea is similar to the untwisted case: we use the above expansions to recursively find the point on the exceptional divisor that we blow up in the following step.
We assume that $\sigma_1 : \Xt \to X$ is the blow-up of $X$ in the point 
\begin{equation}\label{eq:a-8_kappa}
   [a_{-8} \kappa _1(0) : \mathbf{1} ]. 
\end{equation}
Let $E_1 \subset \Xt$ denote the corresponding exceptional divisor, see Figure \ref{fig:conf}. 
Observe that the coordinates $[z_1' : w_1']$ on $E_1$ now satisfy 
$$
  \frac{z_1'}{w_1'} = \frac{z_1}{w_1 - a_{-8}}, 
$$ so on a curve $\widetilde{\Sigma}$, having the expansion of
  (\ref{eq:Taylor-E8}), we have
$$
  \frac{z_1'}{w_1'} = \sum_{n=2}^{\infty} d_n (w_1- a_{-8})^{n-1}.
$$
We define 
$$
  \sigma_2 : X_2 \to \widetilde{X}
$$
as the blow-up of the point 
$$
  [z_1' : w_1'] = [0 : 1] \in E_1. 
$$
In concrete terms, on the affine chart $V_1 = \{ w_1' \neq 0 \} \subset \widetilde{X}$ we normalize $w_1'=1$ and in the 
affine coordinates $(z_1', w_1)$ on $V_1$ we consider 
$$
  \{ (z_1', w_1, [z_1'' : w_1'']) \in V \times \CP1 | \quad w_1''z_1' - z_1''(w_1- a_{-8}) = 0  \}. 
$$
(Observe that we have met the exceptional divisor of $\sigma_2$ in Figure \ref{fig:conf} under the name $E_4$.) 
With these definitions, over $V_2 = \{ w_1'' \neq 0 \} \subset X_2$ on a curve $\widetilde{\Sigma}$ having the expansion of (\ref{eq:Taylor-E8}) we have 
\begin{align*}
  \frac{z_1''}{w_1''} & = \frac{z_1'}{w_1- a_{-8}} \\
	& = \frac{z_1 w_1'}{(w_1- a_{-8})^2} \\
	& = \frac{z_1}{(w_1- a_{-8})^2} \\
	& = \sum_{n=2}^{\infty} d_n (w_1- a_{-8})^{n-2}
\end{align*}
(recall we have set $w_1' = 1$).

{From} this point on, the pattern of the construction of $\Xtt$ is clear and similar to the construction in the untwisted case. 
Namely, for $3 \leq n \leq 8$ we successively consider the blow-up 
$$
    \sigma_{n} : X_{n} \to X_{n-1} 
$$
of the point 
$$
  [z_1^{(n-1)} : w_1^{(n-1)}] = [d_{n} : 1] \in E_{n+1}
$$
and denote by $E_{n+2}$ the exceptional divisor of $\sigma_{n}$. 
We set 
$$
  \Xtt = X_8 ,
$$
and define 
\begin{equation}\label{eq:sigma_twisted}
   \sigma = \sigma_8 \circ \cdots \circ \sigma_1 : \Xtt \to X. 
\end{equation}

\begin{prop}\label{prop:equivalence}
 There exists an equivalence of categories between the groupoids of 
 \begin{enumerate}
  \item Higgs bundles on $\CP1$ with one singular point $q = 0$ and
    local form given by (\ref{eq:local-form-E8}) with $b_{-7} \neq
    0$ \label{prop:equivalence1}, and 
  \item pure sheaves of dimension $1$ and rank $1$ on $\Xtt$ supported
    on a curve $\widetilde{\Sigma}$ which is disjoint from $E_1,\ldots
    ,E_9$ and intersects $E_{10}$ with algebraic multiplicity
    $1$.  \label{prop:equivalence2}
 \end{enumerate}
\end{prop}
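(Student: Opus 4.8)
The plan is to construct explicit functors in both directions and check that they are mutually quasi-inverse, following the semi-simple argument of \cite[Theorem~4.3]{Sz-bnr} but carrying out the local analysis at $q$ directly, since the leading-order term is now nilpotent rather than regular semi-simple. Throughout I write $\pi = p\circ\sigma : \Xtt \to \CP1$ and let $\zeta$ denote the fiber coordinate $z_2$, regarded as a section of the pullback of $K(4\cdot\{q\})$ to $\Xtt$; multiplication by $\zeta$ will supply the Higgs field.

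First I would treat the forward direction. Given a Higgs bundle $(\E,\t)$ of local form (\ref{eq:local-form-E8}) with $b_{-7}\neq 0$, its characteristic polynomial cuts out a spectral curve $\Sigma\subset X$. By Lemma~\ref{lem:an-bn} the two eigenvalues agree to order $z_1^{-4}$ and split only at half-integral order, precisely because $a_{-7}=\sqrt{b_{-7}}\neq 0$; hence $\Sigma$ is ramified over $q$, and Lemma~\ref{lem:dn-bn} endows its branch with the expansion $z_1 = d_2(z_2-b_{-8})^2+\cdots$ with $d_2\neq 0$. By construction the blow-up centers $[d_n:1]\in E_{n+1}$ are chosen exactly so that the strict transform $\St\subset\Xtt$ is disjoint from $E_1,\dots,E_9$ and meets $E_{10}$ transversally in a single point. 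I then define the spectral sheaf as $\Ft = \coker(\zeta\cdot\Id-\t)$, viewed as an $\O_{\St}$-module; it is pure of dimension one and of rank one since $\St\to\CP1$ is a finite double cover.

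For the reverse direction, given a pure one-dimensional rank-one sheaf $\Ft$ supported on a curve $\St$ with the stated intersection conditions, I would set $\E=\pi_*\Ft$ and let $\t$ be the endomorphism induced by multiplication by $\zeta$, which lands in $\E\otimes K(4\cdot\{q\})$. Since $\St$ lies in the linear system $|D|$ and meets the generic ruling fiber in two points, $\pi|_{\St}$ is a finite double cover, so $\E$ is locally free of rank two. Reading off the polar part is the delicate step: working in the local model near $q$, purity together with the multiplicity-one intersection with $E_{10}$ forces $\pi_*\O_{\St}$ to be $\O\oplus\O$, on which $\zeta$ acts in a suitable frame with leading coefficient the Jordan block $\left(\begin{smallmatrix}b_{-8}&1\\0&b_{-8}\end{smallmatrix}\right)$, the off-diagonal unit being the algebraic signature of the ramification of $\St$ over $q$.

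The hard part will be exactly this local computation. Unlike the semi-simple case, where $\pi_*\O_{\St}$ splits compatibly with the two distinct eigenvalues, here the sheets collide, and one must verify that pushing forward a rank-one sheaf on the ramified curve reproduces the nilpotent leading term of (\ref{eq:local-form-E8}), with the lower-order coefficients $b_{-7},\dots,b_{-3}$ recovered through the inverse formulas of Lemmas~\ref{lem:an-bn} and \ref{lem:dn-bn}. Once this identification is in place, the remaining checks are formal: the forward construction recovers $\Ft$ as a cokernel, the reverse one recovers $(\E,\t)$ by pushforward, and both assignments manifestly respect isomorphisms, so the two functors are quasi-inverse on the level of groupoids. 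Finally I would note that the degree bookkeeping agrees with the general formula (\ref{eq:delta-d}), so that rank-two degree-$d$ Higgs bundles correspond to degree-$(d+2)$ spectral sheaves.
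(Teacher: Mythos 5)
Your outline names the correct dictionary lemmas, but it omits the technical core of the paper's proof: a mechanism for transporting the spectral sheaf between $X$ and $\Xtt$. The cokernel $\Ft_0=\coker\left(\xi\otimes p^*\t+\zeta\right)$ lives on (the total space over) $X$, supported on $\Sigma_0$, and your phrase ``viewed as an $\O_{\St}$-module'' is exactly the unproven step: the naive pullback $\sigma^*\Ft_0$ acquires torsion along the exceptional divisors, and pullback and pushforward are not mutually inverse across the blow-ups. The paper handles this with the proper transform functor of \cite{ASz}, $\F^{\tau}=\tau^*\F/\Tor_1^{\O_{\widetilde{W}}}(\tau^*\F,\O_{\widetilde{W}}(E)_E)$, and Lemma \ref{lem:A-Sz}, applied recursively through all eight blow-ups $\sigma_1,\dots,\sigma_8$. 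That lemma delivers in one stroke everything your proposal asserts without proof: purity and homological dimension $1$ of $\Ft=\Ft_8$, the fact that no exceptional divisor lies in its support, the identity $R^0(p\circ\sigma)_*(\Ft)=\E$ which makes the two constructions inverse to each other, and (via functoriality of the proper transform and of the direct image) the action of the equivalence on morphisms. As written, your two assignments are not yet shown to be functors, let alone quasi-inverse ones; even the plausible rescue you could have invoked --- that $\sigma|_{\St}:\St\to\Sigma_0$ is an isomorphism because every curve in play is smooth at the point over $q$ --- is never argued.

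Concerning what you call ``the hard part'': the paper in fact performs no local matrix computation in the proof of Proposition \ref{prop:equivalence}. Since the support curve meets $E_{10}$ with algebraic multiplicity $1$, it is smooth at that point, so a pure rank-one sheaf is locally free there and the local analysis reduces to the support curve alone; the equivalence between the conditions of part (\ref{prop:equivalence2}) and the local form (\ref{eq:local-form-E8}) with $b_{-7}\neq 0$ is then exactly the chain: expansion (\ref{eq:Taylor-E8}) holds if and only if the Puiseux expansion (\ref{eq:Puiseux-E8}) holds with $a_{-7}\neq 0$ (part (\ref{lem:dn-an-2}) of Lemma \ref{lem:dn-an}), if and only if the parameters satisfy $b_{-7}\neq 0$ (part (\ref{lem:an-bn-2}) of Lemma \ref{lem:an-bn}), combined with the gauge normal form already established in Subsection \ref{subsec:spectral}. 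Deferring this verification as a computation ``to be done'' leaves the key implication of the converse direction unproven, and the one concrete claim you do make there --- that $\pi_*\O_{\St}\cong\O\oplus\O$ --- is false globally: for $\St$ of arithmetic genus $1$ one has $\chi(\pi_*\O_{\St})=\chi(\O_{\St})=0$, hence $\pi_*\O_{\St}\cong\O\oplus\O(-2)$, while locally near $q$ the claim is vacuous (any rank-two free module is trivial) and so cannot carry the identification of the Jordan-block leading term.
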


\begin{proof}
 Let $(\E, \theta)$ be a Higgs-field as in part
 \eqref{prop:equivalence1}. Consider its spectral sheaf
 $$
    \Ft_0 = \coker \left( p^*(\E\otimes \Theta_C (-4 \cdot \{ 0 \}))  \xrightarrow{\xi\otimes  p^*\theta+\zeta} p^*(\E)\otimes \O_Z(1) \right),
 $$
 where $\Theta_C (-4 \cdot \{ 0 \})$ is the dual bundle of $K_C(4 \cdot \{ 0 \})$, 
 and $\xi \in H^0(Z,\O_Z(1)), \zeta\in H^0(Z, p^*(K_C(4 \cdot \{ 0 \}))\otimes \O_Z(1))$ are the canonical sections. 
 Let us denote by $\Sigma_0$ the support of $\Ft_0$. Assume that $\Sigma_0$ is integral (i.e. irreducible and reduced).
 Then, by \cite{BNR}, we have 
 \begin{itemize}
  \item $\Sigma_0$ is disjoint from $C_{\infty}$, 
  \item $p$ is finite over $\Sigma_0$, 
  \item $\Ft_0$ is torsion-free on $\Sigma_0$, 
  \item $p_* \Ft_0 = \E$, 
  \item the direct image of multiplication by $\zeta$ on $\Ft_0$ induces $\theta$. 
 \end{itemize}
 Conversely, any sheaf $\Ft_0$ satisfying the first three of these properties is the spectral sheaf of an irregular Higgs bundle $(\E, \theta)$. 
 The integrality requirement on $\Sigma_0$ was later lifted in \cite{Sch}. 

 The idea of the proof is to use the properties of proper transform
 functor of coherent sheaves under the blow-up introduced in
 \cite{ASz}.  Namely, for any smooth surface $W$ and a point $w\in W$,
 let us denote by $\tau: \widetilde{W} \to W$ the blow-up of $w$ and
 by $E$ the exceptional divisor. Now, given any coherent sheaf $\F$ of
 $\O_W$-modules we set
 $$
  \F^E:=\Tor_1^{\O_{\widetilde{W}}}(\tau^*\F, \O_{\widetilde{W}}(E)_E)
 $$ 
 and 
 $$
  \F^{\tau} = \tau^*\F / \F^E. 
 $$
 With these notations, we have the following result. 
 \begin{lem}[Lemma 5.12 \cite{ASz}] \label{lem:A-Sz}
  Suppose that the homological dimension of $\F$ at $x$ satisfies $\mbox{dh}(\F_x ) = 1$. 
\begin{enumerate}
 \item If $\F_x$ is torsion, then $\mbox{dh}(\F^{\tau}_y) = 1$ for any $y\in E$. \label{lem:A-Sz1}
 \item We have $R^0 \sigma_* (\F^{\tau}) = \F$ and $R^i \tau_* (\F^{\tau}) =0$ for all $i>0$. \label{lem:A-Sz2}
 \item If $\F$ is pure of dimension $1$ then $E \not \subseteq
   \mbox{supp}(\F^{\tau})$. \label{lem:A-Sz3} \qed
\end{enumerate}
\end{lem}

The definition of $\Ft_0$ makes it clear that it is a torsion module,
of homological dimension $1$.  As the surface $X$ is regular,
according to the Auslander--Buchsbaum formula we also get that $\Ft_0$
is pure of dimension $1$. Let us write
$$
  \Ft_1 = (\Ft_0)^{\sigma_1}.
$$
Then part (\ref{lem:A-Sz1}) of the lemma applied to $W = X$, $w\in X$ the point given by (\ref{eq:a-8_kappa}) and $\F = \Ft_0$ implies that 
$\Ft_1$ is also of homological dimension $1$, and as above we also get that it is pure of dimension $1$. 
Furthermore, part (\ref{lem:A-Sz2}) of the lemma implies that 
$$
  R^0 (\sigma_1)_* (\Ft_1 ) = \Ft_0 .
$$
We recursively define for all $ n \in \{ 2, \ldots , 8 \}$ the coherent sheaf 
$$
  \Ft_{n} = (\Ft_{n-1})^{\sigma_n} 
$$
on $X_n$. Recursive application of part (\ref{lem:A-Sz1}) of the lemma then implies that $\Ft_n$ is of homological dimension $1$ and pure of dimension $1$, 
and by part (\ref{lem:A-Sz2}) it satisfies 
$$
    R^0 (\sigma_n)_* (\Ft_n ) = \Ft_{n-1} .
$$
Let us set $\Ft = \Ft_8$. 
It then follows that using the map of (\ref{eq:sigma_twisted}) we have 
$$
  R^0 \sigma_* (\Ft ) = \Ft_0. 
$$
Using the properties of $\Ft_0$ we then get that 
\begin{equation}\label{eq:direct-image-spectral-sheaf}
  R^0 (p \circ \sigma)_* (\Ft ) = \E. 
\end{equation}
We now show that 
\begin{equation}\label{eq:functor-objects}
   \E \mapsto \Ft = \Ft_8 
\end{equation}
gives a map from the set of objects of (\ref{prop:equivalence1}) to the set of objects of (\ref{prop:equivalence2}).  
Indeed, purity follows from Lemma \ref{lem:A-Sz} as observed above. The rank of $\Ft$ is equal to $1$ because of (\ref{eq:direct-image-spectral-sheaf}), 
given that the rank of $\E$ is $2$ and that $p \circ \sigma|_{\widetilde{\Sigma}}$ is a double cover of $\CP1$. 
Finally, by part (\ref{lem:A-Sz3}) of the lemma, the exceptional divisors $E_1,\ldots ,E_{10}$ are not contained in $\widetilde{\Sigma}$. 
Moreover, according to part (\ref{lem:an-bn-1}) of Lemma \ref{lem:an-bn} and part (\ref{lem:dn-an-1}) of Lemma \ref{lem:dn-an}  for each $n$ the center of 
the blow-up $\sigma_n$ is the only intersection point of the proper transform of $\Sigma_0$ in $X_{n-1}$ with the exceptional divisor $E_{n+1}$. 
This implies the statement about intersections. 

Conversely, suppose that a sheaf $\Ft$ fulfilling the properties of
(\ref{prop:equivalence2}) is given.  Then, we define a holomorphic
vector bundle $\E$ by (\ref{eq:direct-image-spectral-sheaf}), and we
define a Higgs field $\theta$ as the direct image of multiplication by
$\zeta \d z$ on $\Ft_0 = R^0 \sigma_* (\Ft )$.  If the curve
$\widetilde{\Sigma}$ is disjoint from $E_1,\ldots ,E_9$ and intersects
$E_{10}$ with algebraic multiplicity $1$, then the expansion of its
image $\sigma(\widetilde{\Sigma} )$ near $q$ is given by
(\ref{eq:Taylor-E8}).  By virtue of part (\ref{lem:dn-an-2}) of
Lemma~\ref{lem:dn-an}, this implies the converse expansion
(\ref{eq:Puiseux-E8}) with $a_{-7} \neq 0$.  Then, according to part
(\ref{lem:an-bn-2}) of Lemma~\ref{lem:an-bn}, the coefficients in
the form (\ref{eq:local-form-E8}) are as required.  This then gives
the inverse map of (\ref{eq:functor-objects}) on objects.

Now, let us consider the map on morphisms. 
Recall that an isomorphism $(\E_1, \theta_1) \cong (\E_2, \theta_2)$ amounts to an isomorphism of vector bundles 
$$
  \Psi : \E_1 \to \E_2
$$
such that 
$$
  \theta_2 \circ \Psi = (\Psi \otimes \mbox{I}_K) \circ \theta_1, 
$$
where $\mbox{I}_K$ stands for the identity map of the canonical bundle $K_{\CP1}$. Therefore, if $(\E_1, \theta_1)$ and 
$(\E_2, \theta_2)$ are isomorphic, then we have a diagram 
$$ \xymatrixcolsep{2.1pc}\xymatrix{
0 \ar[r] 
& p^* \E_1 \ar[r]^-{\xi\otimes p^*\theta_1+\zeta} \ar[d]^{\Psi } 
& p^* (E_1 \otimes K(4 \cdot \{ 0 \})) \ar[r] \ar[d]^{\Psi \otimes \mbox{I}_K} 
& \Ft_0 (\E_1,  \theta_1) \otimes p^* (K(4 \cdot \{ 0 \})) \ar[r] & 0 \\ 
0 \ar[r] 
& p^* \E_2 \ar[r]^-{\xi\otimes p^*\theta_2+\zeta} 
& p^* (E_2 \otimes K(4 \cdot \{ 0 \})) \ar[r] 
& \Ft_0 (\E_2, \theta_2) \otimes p^* (K(4 \cdot \{ 0 \})) \ar[r] & 0}
$$

It follows from this diagram that there exists a morphism of sheaves of $\O_{\Xt}$-modules 
$$
  \Ft_0 (\E_1, \theta_1) \to \Ft_0 (\E_2, \theta_2), 
$$
which is an isomorphism with inverse induced by $\Psi^{-1}$ in the same way. 
This isomorphism in turn induces isomorphisms 
$$
  \Ft_8 (\E_1, \theta_1) \cong \Ft_8 (\E_2, \theta_2) 
$$
by functoriality of the proper transform operation. 
On the other hand, such an isomorphism of spectral sheaves gives an isomorphism of Higgs bundles by functoriality of the direct image functor. 
This finishes the proof of Proposition~\ref{prop:equivalence}. 
\end{proof}

\subsection{The proof of Theorem \ref{thm:mainE8}} 
Now we are ready to give the proof of our second main result.
\begin{proof}[Proof of Theorem \ref{thm:mainE8}]
 According to Proposition \ref{prop:equivalence}, describing the
 moduli space of irregular Higgs bundles with local form given by
 (\ref{eq:local-form-E8}) is equivalent to describing the relative
 Picard scheme of degree $1$ torsion-free sheaves on curves satisfying
 the properties listed in part (\ref{prop:equivalence2}) of 
Proposition~{\ref{prop:equivalence}}.

As in the untwisted case, we write the characteristic polynomial of 
$\t$ in the trivialization given by {$\kappa_1$ and $\kappa_1^2$} of 
(\ref{eq:char-poly2}). The polynomials $f_1$ and $g_1$ are given 
in (\ref{eq:f1}) and (\ref{eq:g1}), and the characteristic polynomial is:
\begin{equation*}
	\chi_{\vartheta_1} (z_1, w_1) = w_1 ^2- \left(p_2 z_1^2+p_1 z_1+p_0\right) w_1-\left(q_4 z_1^4+q_3 z_1^3+q_2 z_1^2+q_1 z_1+q_0\right).
\end{equation*}
Now the roots of {$\chi_{\vartheta_1}(0, w_1)$} in $w_1$ are equal, because the
curve intersects the $z_1=0$~line in one point. This requirement is satisfied
if the discriminant of {$\chi_{\vartheta_1}$} vanishes at $z_1=0$, that is,
\begin{equation*}
	p_0^2+4 q_0 = 0.
\end{equation*}

After this simplification, we consider the expansions of the roots of
{$\chi_{\vartheta_1} (z_1, w_1)$} with respect to $z_1$.  It is enough
to consider the positive root, because the  expansions of the two
roots differ in a negative sign in certain terms. The expansion is:
\begin{align*}
\begin{split}
  w_1 =& \frac{p_0}{2} + \frac{1}{2} \sqrt{2 p_0 p_1+4 q_1} \sqrt{z_1} + \frac{p_1}{2} z_1 +\frac{p_1^2+2 p_0 p_2+4 q_2}{4 \sqrt{2 p_0 p_1+4 q_1}} z_1^{3/2} +\\
	&+ \frac{p_2}{2} z_1^2 + \frac{1}{4} \sqrt{2 p_0 p_1+4 q_1} \left(\frac{ p_1 p_2+2 q_3}{ p_0 p_1+2 q_1}-\frac{\left(p_1^2+2 p_0 p_2+4 q_2\right){}^2}{16 \left( p_0 p_1+2 q_1\right){}^2}\right) z_1^{5/2}.
\end{split}
\end{align*}

We write the local form of $\t$ in the twisted case as in 
(\ref{eq:local-form-E8}). We described the matrix eigenvalues in 
Lemma~\ref{lem:an-bn} by the Puiseux expansion, with coefficients
$a_n$.

These two expansions are the same, hence by comparing the coefficients
we get the following:
\begin{align*}
\begin{split}
  \chi_{\vartheta_1} (z_1, w_1, t) =& w_1^2 - \left(b_{-4} z_1^2+b_{-6} z_1+2 b_{-8}\right) w_1 -t z_1^4-b_{-3} z_1^3+ \\
	&+ \left(b_{-8} b_{-4}-b_{-5}\right) z_1^2+ \left(b_{-8} b_{-6}-b_{-7}\right) z_1+b_{-8}^2,
\end{split}
\end{align*}
where (as in the untwisted case) we denote $q_4$ by $t$, and the
degree of the polynomial is $2$ in the variable $w_1$ and $4$ in
$z_1$. Therefore, we get a pencil parametrized by $t$ with 
base locus $(0,b_{-8})$ in $\C^2$.

As in the  untwisted case, we consider the characteristic 
polynomial in the chart $U_2$ with trivialization $\kappa_2$.
The polynomials $f_2$ and $g_2$ are given in (\ref{eq:f2}) and (\ref{eq:g2}).
\begin{align*}
	\chi_{\vartheta_2} (z_2, w_2, t) =& w_2^2 + f_2(z_2) w_2 + g_2(z_2,t) =\\
	=& w_2^2+ \left(2 b_{-8} z_2^2+b_{-6} z_2+b_{-4}\right)w_2 +b_{-8}^2 z_2^4+\left(b_{-8} b_{-6}-b_{-7}\right) z_2^3 +\\ 
	&+ \left(b_{-8} b_{-4}-b_{-5}\right) z_2^2-b_{-3} z_2-t.
\end{align*}

The pencil in the Hirzebruch surface $X$ is defined by 
$\chi_{\vartheta_1} (z_1, w_1, t)$ 
and the union of the section at infinity with fiber $F_0$. 
According to the converse direction of Theorem~\ref{thm:e8fibration}, the
pencil gives rise to an elliptic fibration in
$\CP2\# 9 {\overline {\CP{}}}^2$ with a singular fiber of type
$\Et8$.

The pencil determines the types of further singular fibers in the elliptic
fibration. In the following we will identify the types of these
further singular fibers in terms of the defining constants of the pencil.
The spectral curves intersect the fiber component $F_0$ of 
the curve $C_{\infty}$ at infinity (whose fiber is with multiplicity $4$) 
in one point and according to Condition \ref{item:SpectralCurveCond3}
of Definition~\ref{def:Hitchin_base} 
the pencil has no singular point on the distinguished fiber $F_0$.  
Thus it is sufficient to consider the $\kappa_2$ trivialization, i. e. the 
chart $(z_2, w_2)$ (see Equation~(\ref{eq:kappa2})). 
For identifying the singular fibers in 
the pencil, we look for triples $(z_2, w_2,t)$ such that $(z_2, 
w_2)$ fits the curve with parameter $t$ and the partial derivatives 
below vanish:
\begin{align*}
	\chi_{\vartheta_2} (z_2, w_2, t) &= 0, \\
	\frac{\partial \chi_{\vartheta_2}(z_2, w_2, t)}{\partial w_2} &=0, \\
	\frac{\partial \chi_{\vartheta_2}(z_2, w_2, t)}{\partial z_2} &=0.
\end{align*}

Notice that the second and third equations do not involve $t$, hence 
we can solve this system  for $w_2$ and $z_2$. Indeed, by
solving  the second equations for the variable $w_2$ we get
\begin{equation*}
	w_2 =-\frac{1}{2} \left(2 b_{-8} z_2^2+b_{-6} z_2+b_{-4}\right).
\end{equation*}
We substitute the resulting expression into the third equation, leading to
\begin{equation}
	\label{eq:E8_quadric}
	0 = 6 b_{-7} z_2^2+\left(b_{-6}^2+4 b_{-5}\right) z_2+b_{-6} b_{-4}+2 b_{-3}.
\end{equation}

This polynomial is quadratic in $z_2$ and has one root if and only if 
the discriminant 
\begin{equation*}
	D=\left(b_{-6}^2+4 b_{-5}\right){}^2-24 b_{-7} \left(b_{-6}
        b_{-4}+2 b_{-3}\right)
\end{equation*}
vanishes. In this case the pencil has a single further singular fiber, which
has a cusp singularity. If $D\neq 0$ then the fibration has two $I_1$ singular
fibers.

The fibration obtained from the pencil has a
section, so just as in the proof of Theorem~\ref{thm:mainE7} we may
apply the relative Abel--Jacobi map to identify the fibration and its
relative Picard scheme over the locus of smooth curves. Thus it is
sufficient to describe the singular fibers of $H$. By Proposition
\ref{prop:compactified-Picard-scheme-I1-II}, these latter are as
stated in Theorem~\ref{thm:mainE8}, concluding the proof.
\end{proof}

\bibliography{2d}

\begin{thebibliography}{10}

\bibitem{ASz}
K.~{Aker} and Sz. {Szab\'o}.
\newblock Algebraic {N}ahm transform for parabolic {H}iggs bundles on
  $\mathbf{P}^1$.
\newblock {\em Geometry and Topology}, 18(5):2487--2545, 2014.

\bibitem{AK}
A.~Altman and S.~Kleiman.
\newblock The presentation functor and the compactified {J}acobian.
\newblock In {\em The Grothendieck Festschrift}, volume~86 of {\em Progress in
  Mathematics}, pages 15--32. Birkhauser, 1990.

\bibitem{BNR}
A.~{Beauville}, M.~{Narasimhan}, and S.~{Ramanan}.
\newblock Spectral curves and the generalised theta divisor.
\newblock {\em J. reine angew. Math.}, 398:169--179, 1989.

\bibitem{Bh}
U.~Bhosle.
\newblock Generalised parabolic bundles and applications to torsionfree sheaves
  on nodal curves.
\newblock {\em Ark. Mat.}, 30(2):187--215, 1992.

\bibitem{Biq-Boa}
O.~{Biquard} and Ph. {Boalch}.
\newblock Wild non-abelian {H}odge theory on curves.
\newblock {\em Compos. Math.}, 140(1):179--204, 2004.

\bibitem{Cook}
P.~Cook.
\newblock {\em {Local and Global aspects of the Module Theory of Singular
  Curves}}.
\newblock PhD thesis, University of Liverpool, 1993.

\bibitem{DS}
C.~D'Souza.
\newblock Compactification of generalised {J}acobians.
\newblock {\em Proc. Ind. Acad. Sci.}, 88 A(5):419--457, 1979.

\bibitem{GMN}
D.~Gaiotto, G.~Moore, and A.~Neitzke.
\newblock Wall-crossing, {H}itchin systems, and the {WKB} approximation.
\newblock {\em Adv. Math.}, 234:239--403, 2013.

\bibitem{GO}
P.~Gothen and A.~Oliveira.
\newblock The singular fiber of the {H}itchin map.
\newblock {\em Int. Math. Res. Not. IMRN}, 5:1079--1121, 2013.

\bibitem{Hit}
N~Hitchin.
\newblock Stable bundles and integrable systems.
\newblock {\em Duke Math. J.}, 54(1):91--114, 1987.

\bibitem{Inaba}
M.~Inaba.
\newblock Moduli space of irregular singular parabolic connections of generic
  ramified type on a smooth projective curve.
\newblock arXiv:1606.02369.

\bibitem{ISSz2}
P.~Ivanics, A.~Stipsicz, and Sz. Szab\'o.
\newblock Hitchin fibrations on moduli of irregular higgs bundles and motivic
  wall-crossing.
\newblock arXiv:1710.09922.

\bibitem{FGA_Pic}
S.~Kleiman.
\newblock The {Picard} scheme.
\newblock In {\em Fundamental Algebraic Geometry, Grothendieck's FGA
  Explained}, volume 123 of {\em Mathematical Surveys and Monographs}. American
  Mathematical Society, 2005.

\bibitem{Kodaira}
K.~Kodaira.
\newblock On compact analytic surfaces: {II}.
\newblock {\em Ann. Math.}, 77:563--626, 1963.

\bibitem{Miranda}
R.~Miranda.
\newblock Persson's list of singular fibers for a rational elliptic surface.
\newblock {\em Math. Z.}, 205(2):191--211, 1990.

\bibitem{OS}
T.~Oda and C.~Seshadri.
\newblock Compactifications of the generalized {J}acobian variety.
\newblock {\em Transactions of the {AMS}}, 253, 1979.

\bibitem{Persson}
U.~Persson.
\newblock Configurations of {K}odaira fibers on rational elliptic surfaces.
\newblock {\em Math. Z.}, 205(1):1--47, 1990.

\bibitem{Sak}
H.~Sakai.
\newblock Rational surfaces associated with affine root systems and geometry of
  the {P}ainlev\'e equations.
\newblock {\em Comm. Math. Phys.}, 220(1):165--229, 2001.

\bibitem{Sch}
D.~Schaub.
\newblock Courbes spectrales et compactifications de {J}acobiennes.
\newblock {\em Math. Z.}, 227:295--312, 1998.

\bibitem{SSS}
A.~Stipsicz, Z.~Szab\'o, and \'A. Szil\'ard.
\newblock Singular fibers in elliptic fibrations on the rational elliptic
  surface.
\newblock {\em Periodica Mathematica Hungarica}, 54:137--162, 2007.

\bibitem{Sz-bnr}
Sz. Szab\'o.
\newblock The birational geometry of unramified irregular {H}iggs bundles on
  curves.
\newblock {\em International Journal of Mathematics}, 28(6), 2017.

\end{thebibliography}
\bibliographystyle{plain}

\end{document}